\documentclass[12pt,oneside]{amsart}

\usepackage{amssymb}
\usepackage{amscd}

\theoremstyle{plain}
\newtheorem{theorem}{Theorem}[section]
\newtheorem{lemma}[theorem]{Lemma}
\newtheorem{corollary}[theorem]{Corollary}
\newtheorem{proposition}[theorem]{Proposition}

\theoremstyle{definition}
\newtheorem{definition}[theorem]{Definition}
\newtheorem{example}[theorem]{Example}
      
\theoremstyle{remark}

\newcommand{\C}{C_{\phi}}
\newcommand{\Cn}{\mathbb{C}^n}
\newcommand{\CN}{\mathbb{C}^N}

\newcommand{\Bn}{\mathbb{B}^n}

\newcommand{\D}{\mathcal{D}}
\newcommand{\A}{A_H^2(\D)}
\newcommand{\h}{\mathcal{H}}
\newcommand{\p}{\phi'(0)^t}

\newcommand{\E}{\epsilon}

   
      \makeatletter
      \def\@setcopyright{}
      \def\serieslogo@{}
      \makeatother
    
\begin{document}


   \author{Robert Bridges}
   \address{Purdue University, West Lafayette, IN}
   \email{bridges@purdue.edu}


     

   \title[Schroeder's Functional Equation]{A Solution to Schroeder's Equation in Several Variables}
   
   \begin{abstract}
    Let $\phi$ be a self-map of $\Bn$, the unit ball in $\Cn$, fixing $0$, and having full-rank at $0$.  If $\phi'(0)\neq0$, Koenigs proved in 1884 that in the well-known case $n=1$, Schroeder's equation, $ f\circ \phi = \lambda f$ has a solution $f$, which is bijective near $0$ precisely when $\lambda=\phi'(0)$.  In 2003, Cowen and MacCluer formulated the analogous problem in $\Cn$ (for a non-negative integer $n$) by defining Schroeder's equation in several variables as $F\circ \phi = \phi'(0) F$ and giving appropriate assumptions on $\phi$.  The 2003 Cowen and MacCluer paper also provides necessary and sufficient conditions for an analytic solution, $F$ taking values in $\Cn$ and having full-rank near $0$ under the additional assumption that $\phi'(0)$ is diagonalizable.  The main result of this paper gives necessary and sufficient conditions for a Schroeder solution $F$ which has full rank near 0 without the added assumption of diagonalizability. More generally, it is proven in this paper that the functional equation $\C F=\phi'(0)^kF$ with $k$ a positive integer, is always solvable with an $F$ whose component functions are linearly independent, but if $k>1$ any such $F$ cannot be injective near 0.  In 2007 Enoch provides many theorems giving formal power series solutions to Schroeder's equation in several variables.  It is also proved in this note that any formal power series solution indeed represents an analytic function on the ball.     
   \end{abstract}


   \keywords{Schroeder, functional equation, composition operator, iteration, analytic functions, Bergman space, compact operator}

   \thanks{This research was partially supported by NSF Analysis and Cyber-enabled Discovery Initiative Programs,
grant number DMS-1001701}
\thanks{Section \ref{Section: generalized solutions} was inspired by a short conversation with \^{Z}eljko \^{C}u\^{c}kovi\'{c} that took place at the 27th Southeast Analysis Meetings, University of Florida Gainesville, March 17-19, 2011.  The author thanks Professor \^{C}u\^{c}kovi\'{c}.}
   \thanks{The author thanks Carl C. Cowen, his thesis advisor the the advice, encouragement, and guidance.}


   \date{\today}
   
    \maketitle
    
\section{Introduction}
Let $\phi$ be a self-map of $\mathbb{D}=\{z\in \mathbb{C}: |z|<1\}$, fixing $0$, and not a disk automorphism.  To avoid trivialities, assume that $\phi$ is not the zero function.  Schroeder's equation is $f\circ \phi= \lambda f$, where $f$ is an unknown analytic function on $\D$, and $\lambda$ an unknown complex number.  If $C_{\phi}$ denotes the composition operator, which sends a function $f$ defined on $\mathbb{D}$ to $f\circ\phi$, then Schroeder's equation is the eigenvalue equation, $\C f= \lambda f$.   Koenigs showed in 1884 that Schroeder's equation has a solution, $f,$ which is also bijective near 0, if and only if $\lambda=\phi'(0)\neq 0$ \cite{Koen}.  Now if $f'(0)\neq 0$, we can view Schroeder's equation not only as an eigenvalue equation, but as a change of variables near 0, the attracting fixed point of $\phi$.  Explicitly,  $\phi(z)=f^{-1}(\phi'(0)z)$ near 0.  Such a solution was one of the first steps in understanding intertwining maps and models of iteration, a theory which has been foundational for the understanding of composition operators in one variable (for example, see \cite[Section 2.4]{CMbook}).  
  
  In hopes of generalization,  Cowen and MacCluer have formulated an analogous problem in $\Cn$ (for arbitrary $n$), namely to solve the functional equation 
  \begin{equation}
\label{intro equation}  
  \C F=\phi'(0)F
  \end{equation}
   with an analytic $F:\Bn \to \Cn$, where $\phi$ is a given analytic self-map of $\Bn$, the unit ball in $\Cn$ \cite{CM03}.   The hypotheses taken on $\phi$ are that $\phi(0)=0$, $\phi'(0)$ has full rank, and $\phi$ is not unitary on any slice.  Notice that the statement ``$|\phi(z)|<|z|$ for any $0<z<1$" is equivalent to ``$\phi $ is not unitary on a slice" via Schwarz's lemma.  Since $\phi$ is $\Cn$-valued, we see that $\phi'(0)$ is an $n\times n$ matrix, so $F$ must be a column vector with $n$ components, that is, 
      $$F=\left[\begin{matrix}f_1 \\ \vdots \\ f_n  \end{matrix} \right]= (f_1, \dots, f_n)^t$$
with each $f_j:\Bn\to \Cn$ analytic so the multiplication on the right side of (\ref{intro equation}) makes sense.  For $n>1$ Equation (\ref{intro equation}) is no longer an eigenvalue equation but is still a change of variables near 0, provided $F$ has full rank near 0.  As in the one variable setting,  $\phi(z)=F^{-1}(\phi'(0)F(z))$ near 0.  Thus, we seek solutions $F:\Bn \to \Cn$ which are analytic and have full rank near 0, and any such $F$ will be referred to as a ``full rank solution."  For more details on the formation of the problem, see \cite{CM03}.  Unlike the single variable case, it is easy to see that if $n>1$ there exist Schroeder solutions which are nontrivial and not full rank either, for example if $\phi(z_1,z_2)=(z_1/2,z_2/4)$ and $F(z_1,z_2)=(z_1, z_1^2)^t.$  
   
   After formulating Schroeder's equation in several variables, \cite{CM03} gives necessary and sufficient conditions for a full rank solution under the additional hypothesis that $\phi'(0)$ is diagonalizable.  Specifically,  their argument exhibits a Bergman space of analytic function on $\Bn$ which is large enough so that $\C$ is compact on it, and then uses the compactness to produce solutions $F=(f_1, ... , f_n)^t$ with each $f_j$ in the Bergman space.  This ensures that  
$F$ is indeed analytic.  The current article adopts the same overall strategy but provides a new viewpoint, namely reducing to the case that $\phi'(0)$ is in Jordan form.  It is this insight that allows us to give necessary and sufficient conditions for a full rank solution under the general hypotheses, that is, without the additional assumption that $\phi'(0)$ is diagonalizable  (see Theorem  \ref{C: main theorem part 2} and \ref{main theorem revisited}).  

It is well-known in the one variable setting that $\phi'(0)^k$ is an eigenvalue of $\C$ for any positive integer $k.$  We consider the analogous equation, 
\begin{equation}
\label{intro general equation}  
  \C F=\phi'(0)^k F
  \end{equation} 
with $k$ any positive integer, 
and show under the same hypotheses on $\phi$ that a solution $F$ with linearly independent component functions always exists (see Theorem \ref{main generalized theorem}). 

\subsection{Outline}
Sections \ref{Sec:SA} and \ref{Sec: N}  will provide the details of reducing to $\phi'(0)$ in Jordan form, introduce necessary notation, and see what is gained by such a simplification.  Reducing to Jordan form is (to the author's knowledge)  not previously considered in the literature, and this new point of view is the ``trick" to understanding solutions.  Section \ref{Sec. Sw/oCRN0} uses the compactness of $\C$ to reduce the problem of finding Schroeder solutions to that of linear algebra, and contains a subsection of the necessary linear algebra results.  It is here that much of the work takes place.  The main result of this section will guarantee the existence of a Schroeder solution, $F$ with linearly independent component functions.  This gives an intermediate solution (between no solution and a full rank solution) when no full rank solution exists.  Section \ref{Sec: R} examines obstructions to a full rank solution, and the role resonant eigenvalues play; specifically, the Jordan form viewpoint allows one to see specifically how a resonant eigenvalue prevents the existence of a solution which is locally univalent near 0. Our main result lies in Section \ref{Sec: CfaFRS}, which tackles the problem of discriminating which $\phi$ admit full rank solutions to Schroeder's equation, and the closing section addresses solutions to (\ref{intro general equation}).

\section{Simplifying Assumptions}
\label{Sec:SA}
   Let $\phi$ will denote an analytic self-map of $\Bn$, fixing 0.  Let $D$ be an invertible $n\times n$ matrix such that $D\phi'(0)D^{-1}$ is in Jordan form.  Notice $D\Bn$ is an ellipsoid, so while $D\phi D^{-1}$ does have a Jordan form derivative at $0$, it is not necessarily defined on $\Bn$.  This $\acute{\mbox{a}}$ priori eliminates using $C_{D\phi D^{-1}}$ on a Hilbert space of analytic functions on $\Bn$.  To continue we will construct a Hilbert space of analytic functions on $D\Bn$, so that $C_{D\phi D^{-1}}$ is not only defined on this space, but is even a compact operator.
 
Recall that given a domain $\mathcal{D}$ of $\Cn$ and $G:\mathcal{D} \to (0,\infty)$,  $A_G^2(\mathcal{D})$ denotes the space of analytic function $f:\mathcal{D}\to \mathbb{C}$ satisfying
$$\|f \|^2:=\int_{\mathcal{D}} |f(z)|^2 G(z)dz <\infty$$
where $dz$ denotes Lebesgue measure on $\mathbb{R}^{2n}.$
When the domain $\mathcal{D}=\Bn$ we may simply write $A_G^2$.  

Clearly $f\mapsto f\circ D^{-1}$ is a bijective map of $\mathcal{O}(\Bn)$ onto $\mathcal{O}(D\Bn)$, where $\mathcal{O}(\mathcal{D})$ denotes the analytic functions on a domain $\mathcal{D}$.  Since, 
$$\int_{\Bn} |f|^2(z) G(z)dz = \int_{D\Bn}|f\circ (D^{-1})|^2(w) \frac{G\circ (D^{-1})(w)}{|\det(D) |^2}dw,$$
we see $A_H^2(D\Bn)$ is isometrically isomorphic to $A_G^2$ for $$H=\frac{G\circ D^{-1}}{|\det(D)|^2}.$$  This gives rise to the following proposition.

\begin{proposition}
\label{SA: p1}
With $\phi$ and $H$ as above, the following are true.
\begin{enumerate}
\item There exists $D$, an invertible $n\times n$ matrix such that $D\phi'(0)D^{-1}$ in Jordan form.
\item $A_G^2$ is isometrically isomorphic to $A_H^2(D\Bn)$ via $\iota(f)=f\circ D^{-1}$.
\item $C_{\phi}$ is bounded (compact) on $A_G^2$ if and only if $C_{D\phi D^{-1}}$ is bounded (compact) on $A_H^2(D\Bn)$. 
\item  $\{z^{\alpha}\}_{\alpha}$ for multi-indices $\alpha \in (\mathbb{N}\cup\{0\})^N$ is a linearly independent subset of both  $A_G^2$ and $A_H^2(D\Bn)$ (although not necessarily orthogonal in $A_H^2(D\Bn)$).
\end{enumerate}
\end{proposition}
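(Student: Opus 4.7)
The plan is to dispatch the four items in order; each is either an invocation of a standard theorem or a short verification building directly on the integral identity displayed in the paragraph above the proposition. Part (1) is just the Jordan normal form theorem applied to the complex $n \times n$ matrix $\phi'(0)$, so I would simply cite it.

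For part (2), I would first observe that $\iota(f) = f \circ D^{-1}$ is a linear bijection from $\mathcal{O}(\Bn)$ onto $\mathcal{O}(D\Bn)$ with inverse $g \mapsto g \circ D$. The isometric property is exactly the change-of-variable identity displayed just above the proposition: under $w = Dz$, the Jacobian determinant on $\mathbb{R}^{2n}$ is $|\det D|^2$, and the weight $H = (G \circ D^{-1})/|\det D|^2$ is defined precisely to absorb this factor so that $\|\iota f\|_{A_H^2(D\Bn)} = \|f\|_{A_G^2}$.

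Part (3) is the one point where I would slow down, and I expect the main subtlety to be notational bookkeeping rather than any genuine difficulty. The idea is to establish the intertwining identity
\[
C_{D\phi D^{-1}} \circ \iota = \iota \circ \C,
\]
which reduces to a one-line check: for $f \in A_G^2$,
\[
C_{D\phi D^{-1}}(\iota f) = (f \circ D^{-1}) \circ (D \phi D^{-1}) = f \circ \phi \circ D^{-1} = \iota(\C f).
\]
Once this is in hand, part (2) says $\iota$ is a unitary between the two Hilbert spaces, so $C_{D\phi D^{-1}} = \iota \C \iota^{-1}$ is unitarily equivalent to $\C$. Boundedness is preserved because unitary equivalence preserves operator norms, and compactness is preserved because the compact operators form a two-sided ideal closed under conjugation by bounded invertible maps. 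The only point where care is needed is confirming that $D \phi D^{-1}$ really is a self-map of $D\Bn$, which follows immediately from $\phi(\Bn) \subset \Bn$.

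Part (4) follows from uniqueness of Taylor expansions: any finite linear combination $\sum c_\alpha z^\alpha$ that vanishes on a nonempty open subset of $\Cn$ must have all $c_\alpha = 0$. Since $\Bn$ and $D\Bn$ each contain a neighborhood of $0$, linear independence persists in both function spaces. I would also note in passing that orthogonality can fail in $A_H^2(D\Bn)$ because the weight $H$ and the ellipsoid $D\Bn$ need not respect the coordinate axes after the similarity $D$, which is why the parenthetical remark is included in the statement.
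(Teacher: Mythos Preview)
Your proposal is correct and follows essentially the same approach as the paper: the commuting relation $C_{D\phi D^{-1}}\circ\iota=\iota\circ\C$ (the paper phrases this as a commutative diagram) handles (3), and identity of polynomials on an open set handles (4). The only notable difference is that the paper supplies an explicit counterexample to orthogonality in $A_H^2(D\Bn)$ (taking $D=\begin{pmatrix}1&1\\0&1\end{pmatrix}$ and computing $\langle z_1,z_2\rangle\neq 0$), whereas you merely gesture at why it can fail.
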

In practice we usually represent functions of $A_H^2(D\Bn)$ using their Taylor series.  (4) is stated as a reminder that the monomials are not orthogonal vectors in $A_H^2(D\Bn)$.  Although, with the aid of Proposition \ref{SA: P2} we see that the subsequence of homogeneous polynomials converges in $A_H^2(D\Bn)$, and so this convenient representation makes sense in $A_H^2(D\Bn)$.  
\begin{proof}
All statements up to and including the isometric isomorphism have been proven. 
For (3), noting that the following diagram commutes, 
$$\begin{CD}
A_G^2 @>C_\phi >> A_G^2\\
 @V\iota VV @VV\iota V\\
 A_H^2(D\Bn) @>C_{D\phi D^{-1}}>> A_H^2(D\Bn)
\end{CD}$$
we see $C_{\phi}$ is bounded (compact) on $A_G^2$ if and only if $C_{D\phi D^{-1}}$ is bounded (compact) on $A_H^2(D\Bn)$.

For (4) it is well known that $\{z^{\alpha}\}$ is orthogonal (and therefore linearly independent) in $A_G^2$.  For the last result, assume $$\sum_{j=1}^n c_j z^{\alpha_j} =0$$ in $A_H^2(D\Bn)$ for constants $c_j$.  This means that the polynomial on the left is 0 for every $z$ in the open set $D\Bn$, which yields $c_j=0$ for each $j$. Thus we have shown the linear independence of $\{z^{\alpha}\}$.  

To see orthogonality of $\{z^{\alpha}\}$ can fail in $A_H^2(D\Bn)$, consider $G\equiv 1$ on $\mathbb{B}^2$, and
$$D=\left[ \begin{matrix}
1&1\\
0&1
\end{matrix}\right].$$  
A simple calculation shows
\begin{align*} \langle z_1,z_2 \rangle_{A_H^2(D\mathbb{B}^2)} &=\int_{D\mathbb{B}^2}z_1\bar{z_2}\frac{dz}{|\det(D)|^2}\\
									&=\int_{\mathbb{B}^2}(z_1+z_2)\bar{z_2}dz\\
									&=\langle z_2, z_2 \rangle_{A_G^2} >0. 
\end{align*}
 \end{proof}
\begin{proposition}
\label{SA: P2}
Suppose $f=\sum_{j=0}^{\infty}f_j$ is a member of $A_G^2$ with $f_j$ homogeneous polynomials of degree $j$.  Setting $g=\iota f$ and $g_j=\iota f_j$, we see   
\begin{enumerate}
\item $g_j$ is also a homogeneous polynomial of degree $j$, 
\item in $A_H^2(D\Bn)$ $\langle g_j, g_k \rangle =0 $ for $k\neq j$, and
\item $\sum g_j \to g$ in $A_H^2(D\Bn).$
\end{enumerate}
\end{proposition}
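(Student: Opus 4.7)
The plan is to transport all three claims across the isometric isomorphism $\iota:A_G^2\to A_H^2(D\Bn)$ from Proposition \ref{SA: p1}(2), so that each reduces to a known fact about $A_G^2$, where the monomials form an orthogonal system.

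For (1) I would simply exploit the linearity of $D^{-1}$: if $f_j$ is a homogeneous polynomial of degree $j$, then
\[ g_j(\lambda w) = f_j(\lambda D^{-1}w) = \lambda^j f_j(D^{-1}w) = \lambda^j g_j(w), \]
and precomposition with a linear map visibly preserves the polynomial nature, so $g_j$ is a homogeneous polynomial of degree $j$.

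For (2) the isometry gives $\langle g_j, g_k\rangle_{A_H^2(D\Bn)} = \langle f_j, f_k\rangle_{A_G^2}$. Expanding $f_j$ and $f_k$ in the monomial basis as finite linear combinations of $z^\alpha$ with $|\alpha|=j$ and $z^\beta$ with $|\beta|=k$, bilinearity reduces the inner product to a sum of terms $\langle z^\alpha, z^\beta\rangle_{A_G^2}$ with $|\alpha|\neq |\beta|$, hence $\alpha\neq\beta$; each such term vanishes by the orthogonality of $\{z^\alpha\}$ in $A_G^2$ established in the proof of Proposition \ref{SA: p1}(4).

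For (3) I would interpret the standing hypothesis ``$f = \sum f_j$ is a member of $A_G^2$'' as norm-convergence of the homogeneous Taylor expansion in $A_G^2$, apply $\iota$ termwise, and use continuity of $\iota$ (indeed it is an isometry) to conclude that the partial sums $\sum_{j\leq N} g_j = \iota\!\left(\sum_{j\leq N} f_j\right)$ converge to $\iota(f) = g$ in $A_H^2(D\Bn)$.

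None of the three steps is a serious obstacle; the proposition is essentially a functoriality statement for $\iota$. The only subtlety worth flagging is the interpretation used in (3): one needs that ``$f\in A_G^2$ with homogeneous expansion $\sum f_j$'' really does force $A_G^2$-norm convergence of that series, and this in turn relies on the spaces of homogeneous polynomials of different degrees being mutually orthogonal in $A_G^2$, which again comes from the orthogonality of monomials cited above.
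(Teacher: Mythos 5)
Your proof is correct and takes essentially the same approach as the paper's: establish (1) by the homogeneity computation using linearity of $D^{-1}$, then derive (2) and (3) by transferring the corresponding facts in $A_G^2$ (orthogonality of homogeneous components of distinct degrees and norm-convergence of the homogeneous Taylor expansion) across the isometric isomorphism $\iota$. The paper compresses (2) and (3) to the single remark that they ``follow immediately from (1) and the fact $\iota$ is an isometric isomorphism,'' which you have simply spelled out.
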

\begin{proof}
For any $\lambda$ in $\Bn$, we have 
\begin{align*}
g_j(\lambda z)& = \iota f_j (\lambda z) \\
			&= f_jD^{-1}(\lambda z) \\			
			&= f_j(\lambda D^{-1}z) \\
			&= \lambda^j f_jD^{-1}(z) = \lambda^j g_n(z)
\end{align*}
which shows (1).  Both (2) and (3) follow immediately from (1) and the fact $\iota$ is an isometric isomorphism. 
 \end{proof}

\begin{lemma}
\label{SA: l1}
With $\phi$ and $D$ as above, $F$ is a Schroeder solution for $C_{D\phi D^{-1}}$ (with domain $D\Bn$) if and only if $D^{-1}FD$ is a Schroeder solution for $\C$ (with domain $\Bn$). 
\end{lemma}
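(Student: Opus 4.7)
The plan is to unpack the two Schroeder equations and observe that the linear change of variable $w=Dz$ converts one into the other. Write $\psi := D\phi D^{-1}$, which is a self-map of $D\Bn$ fixing $0$ with derivative $\psi'(0)=D\phi'(0)D^{-1}$. The Schroeder equation for $C_{\psi}$ with its associated eigenvalue matrix then reads
\[
F(\psi(w)) \;=\; D\phi'(0)D^{-1}\, F(w), \qquad w\in D\Bn.
\]
I would interpret the expression $D^{-1}FD$ as the map $G:\Bn\to\Cn$ defined by $G(z)=D^{-1}F(Dz)$, so that the Schroeder equation for $\C$ applied to $G$ becomes $G\circ \phi = \phi'(0)G$, i.e.\ $D^{-1}F(D\phi(z))=\phi'(0)D^{-1}F(Dz)$.

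Next I would verify equivalence by a direct calculation. Multiplying the latter equation on the left by $D$ gives
\[
F(D\phi(z)) \;=\; D\phi'(0)D^{-1}\, F(Dz),
\]
and then substituting $w=Dz\in D\Bn$ together with $D\phi(z)=D\phi(D^{-1}w)=\psi(w)$ recovers precisely the Schroeder equation for $C_{\psi}$ displayed above. Every step in this derivation is reversible: multiplication by $D$ is invertible on $\Cn$, and $z\mapsto Dz$ is a bijection $\Bn\to D\Bn$, so the same chain of manipulations run backwards converts a Schroeder solution for $C_{\psi}$ into one for $\C$.

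There is no real obstacle here; the content of the lemma is essentially bookkeeping. The one point worth checking explicitly is that domains line up: $\psi=D\phi D^{-1}$ sends $D\Bn$ into itself because $\phi$ sends $\Bn$ into itself, and $D$ is a bijection $\Bn\to D\Bn$, so both $F\circ \psi$ on $D\Bn$ and $G\circ \phi$ on $\Bn$ are well-defined. In effect, this lemma says that the intertwining $\iota(f)=f\circ D^{-1}$ used in Proposition~\ref{SA: p1} restricts correctly to vector-valued Schroeder solutions, which is exactly what is needed to transfer any result proved in the Jordan-form setting back to the original $\phi$.
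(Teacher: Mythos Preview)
Your argument is correct and follows essentially the same chain of equivalences as the paper's proof: unpack $\C(D^{-1}FD)=\phi'(0)D^{-1}FD$, multiply by $D$, substitute $w=Dz$, and recognize the Schroeder equation for $C_{D\phi D^{-1}}$. The only item the paper adds that you omit is the one-line observation that $D^{-1}FD$ has full rank near $0$ whenever $F$ does (since $(D^{-1}FD)'(0)=D^{-1}F'(0)D$ with $D$ invertible), which matters because in this paper ``Schroeder solution'' carries the full-rank requirement; you may want to insert that remark explicitly.
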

\begin{proof}
Suppose $F$ is a Schroeder solution for $C_{D\phi D^{-1}}$.  Clearly, $D^{-1}FD$ has full rank near $0$ since $F$ does, and $D$ invertible. So we only need to show 
\begin{align*} & \C(D^{-1}FD)=\phi'(0)D^{-1}FD\\
\iff & D^{-1}F(D\phi(z))=\phi'(0)D^{-1}F(Dz) \mbox{ whenever $z$ in $\Bn$}\\
\iff & F(D\phi(z))=D\phi'(0)D^{-1}F(Dz) \mbox{ whenever $z$ in $\Bn$}\\
\iff & F(D\phi(D^{-1}w))=D\phi'(0)D^{-1}F(w) \mbox{ whenever $w$ in $D\Bn$}\\
\iff & C_{D\phi D^{-1}}F=D\phi'(0)D^{-1}F
\end{align*}
so we have the result. 
\end{proof} 

Recall that our goal is to find a Schroeder solution under the hypotheses that $\phi$ is a self-map of $\Bn$ fixing 0, not unitary on a slice, and $\phi'(0)$ invertible.  Cowen and MacCluer provide sufficient conditions on $G$ so that $\C$ is compact on the space $A_G^2$, and we state their theorem below.  As in \cite{CM03}, the compactness of $\C$ will allow us to produce solutions whose component functions lie in $A_G^2$, which ensures the analyticity of $F$.    
\begin{theorem}\cite[Theorem 8]{CM03}
\label{T: CM cpt}
If $\phi:\Bn \to \Bn$ is analytic, fixing 0, and $\phi$ is not unitary on any slice, then $C_{\phi}$ is Hilbert-Schmidt (and therefore compact) on $A_G^2$  for any continuous, non-increasing $G:[0,1)\to (0, \infty)$ such that 
$$\frac{G(r)}{(1-r)^{2n} G(1-\rho(1-r))} $$
is bounded near 1 for any $\rho >1$. In particular, $\C$ is Hilbert-Schmidt on $A_G^2$ for $G(|z|)=\exp(-q/(1-|z|)), q>0.$  
\end{theorem}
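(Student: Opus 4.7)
The plan is to use the ``reproducing-kernel formula'' for the Hilbert-Schmidt norm of a composition operator on a weighted Bergman space, estimate the kernel on the diagonal in terms of the weight, and then control $|\phi(z)|$ using the slice hypothesis.

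First, since $G$ is radial the monomials $\{z^{\alpha}\}$ are pairwise orthogonal in $A_G^2$, with squared norms $c_{\alpha}=\|z^{\alpha}\|_{A_G^2}^2$ reducing by polar coordinates to products of a combinatorial factor and a one-variable integral of the shape $\int_0^1 r^{2|\alpha|+2n-1}G(r)\,dr$. Normalizing gives an orthonormal basis, and computing $\|\C\|_{HS}^2=\sum_{\alpha}\|\C(z^{\alpha}/\sqrt{c_{\alpha}})\|^2$ and interchanging the (positive) sum and integral produces
\[
\|\C\|_{HS}^2 \;=\; \int_{\Bn} K(\phi(z),\phi(z))\,G(z)\,dz,
\]
where $K(w,w)=\sum_{\alpha}|w^{\alpha}|^2/c_{\alpha}$ is the diagonal of the reproducing kernel of $A_G^2$.

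Second, extract a growth bound for $K(w,w)$. A Laplace-type analysis of the $c_{\alpha}$, using only that $G$ is non-increasing and positive on $[0,1)$, yields a schematic pointwise bound
\[
K(w,w) \;\lesssim\; \frac{C}{(1-|w|)^{2n}\,G(|w|)},\qquad |w|\to1,
\]
in which the exponent $2n$ is the real dimension of $\Bn$ and is the origin of the same exponent in the hypothesis on $G$.

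Third, use the slice hypothesis. Applying the one-variable Schwarz lemma to slice functions, together with the assumption that $\phi$ is not unitary on any slice, a Julia/compactness argument near $\partial\Bn$ produces a $\rho>1$ and an $r_0<1$ such that $1-|\phi(z)|\ge \rho(1-|z|)$ whenever $|z|\ge r_0$. Combined with monotonicity of $G$ this gives $G(|\phi(z)|)\ge G(1-\rho(1-|z|))$ on the same set, so that
\[
K(\phi(z),\phi(z))\,G(z) \;\lesssim\; \rho^{-2n}\cdot\frac{G(|z|)}{(1-|z|)^{2n}\,G(1-\rho(1-|z|))}.
\]
By hypothesis this ratio is bounded near $|z|=1$, and on the compact set $\{|z|\le r_0\}$ the integrand is trivially integrable by continuity. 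Hence $\|\C\|_{HS}^2<\infty$ and $\C$ is Hilbert-Schmidt. The specialization $G(r)=e^{-q/(1-r)}$ verifies the comparison condition directly, since $G(r)/G(1-\rho(1-r))=\exp\!\bigl(-q(1-1/\rho)/(1-r)\bigr)$ decays superpolynomially as $r\to 1$ and in particular dominates $(1-r)^{2n}$. The two places requiring real work are the Laplace-type kernel asymptotic of step two and the promotion of the pointwise slicewise strict inequality to the uniform boundary bound with $\rho>1$ in step three; the remainder is essentially bookkeeping.
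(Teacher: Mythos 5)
The paper does not prove this result; it cites it verbatim as Theorem~8 of Cowen--MacCluer \cite{CM03}, so there is no in-paper proof to compare against, and what follows assesses your proposal against the standard argument in that source. Your strategy---computing $\|\C\|_{HS}^2=\int_{\Bn}K(\phi(z),\phi(z))G(z)\,dz$ over the orthonormal monomial basis, bounding the diagonal reproducing kernel in terms of the weight, and closing via a Julia-type boundary estimate $1-|\phi(z)|\ge\rho(1-|z|)$---is essentially the one used there, and the bookkeeping connecting the three pieces is correct.

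Two caveats. First, your schematic kernel bound $K(w,w)\lesssim C/\bigl((1-|w|)^{2n}G(|w|)\bigr)$ is stated a bit more strongly than what the routine submean-value estimate delivers: averaging $|f|^2$ over $B\bigl(w,\epsilon(1-|w|)\bigr)\subset\Bn$ and using monotonicity of $G$ gives $K(w,w)\le C_\epsilon/\bigl((1-|w|)^{2n}G(1-(1-\epsilon)(1-|w|))\bigr)$, and since $G$ is non-increasing, $G\bigl(1-(1-\epsilon)(1-|w|)\bigr)\le G(|w|)$, so your cleaner-looking bound does not follow from that argument for general admissible $G$. This does not derail the proof, because the hypothesis is a ``for every $\rho>1$'' statement: once the Julia constant $\rho>1$ is in hand, choose $\epsilon$ small enough that $(1-\epsilon)\rho>1$ and apply the hypothesis with $\rho'=(1-\epsilon)\rho$; the $\epsilon$-dependent constant is harmless for integrability. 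Second, upgrading the pointwise Schwarz strict inequality $|\phi(z)|<|z|$ to a uniform boundary bound with some $\rho>1$ genuinely requires the compactness/Julia argument you allude to: a sequence $z_k\to\partial\Bn$ with $\tfrac{1-|\phi(z_k)|}{1-|z_k|}\to1$ forces, via the equality case of Julia's lemma evaluated at the fixed point $0$, that $\phi$ be unitary on some slice, contradicting the hypothesis. So flagging those two spots as the real work is exactly right; with them made explicit the proposal is a faithful reconstruction of the \cite{CM03} argument.
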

Fix $G$ such that $\C$ is compact on the Bergman space $A_G^2$.
\begin{corollary}
$C_{D\phi D^{-1}}$ is compact on $A_H^2(D\Bn)$.
\end{corollary}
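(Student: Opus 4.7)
The plan is to combine the two ingredients that have just been set up: the compactness criterion of Cowen and MacCluer (Theorem \ref{T: CM cpt}), and the transfer principle recorded in Proposition \ref{SA: p1}(3). Since we have fixed $G$ so that $\C$ is compact on $A_G^2$ (Theorem \ref{T: CM cpt} guarantees that such $G$ exists, for instance $G(|z|)=\exp(-q/(1-|z|))$ with $q>0$), and since $H$ was defined precisely so that $\iota:A_G^2\to A_H^2(D\Bn)$ given by $\iota f = f\circ D^{-1}$ is an isometric isomorphism, the commuting diagram in the proof of Proposition \ref{SA: p1}(3) shows that $C_{D\phi D^{-1}}$ is unitarily equivalent to $\C$ via $\iota$.

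The key identity to invoke (already established in Proposition \ref{SA: p1}) is
\[
\iota \circ \C = C_{D\phi D^{-1}} \circ \iota,
\]
so that $C_{D\phi D^{-1}} = \iota \,\C\, \iota^{-1}$. Compactness is preserved under conjugation by a bounded invertible operator (and in particular by an isometric isomorphism), so compactness of $\C$ on $A_G^2$ immediately yields compactness of $C_{D\phi D^{-1}}$ on $A_H^2(D\Bn)$.

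There is no real obstacle here: the corollary is essentially a restatement of Proposition \ref{SA: p1}(3) applied to the $G$ just chosen. The substantive content — the existence of a weight $G$ giving a compact composition operator on $\Bn$ — has already been imported from \cite{CM03}, and the change of variables from $\Bn$ to $D\Bn$ was the purpose of constructing $H$ in the first place. The proof is therefore a one-line invocation of these two facts.
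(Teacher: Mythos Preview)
Your proposal is correct and matches the paper's intended reasoning: the corollary is stated without explicit proof precisely because it is an immediate consequence of Proposition~\ref{SA: p1}(3) together with the choice of $G$ made via Theorem~\ref{T: CM cpt}, exactly as you outline. There is nothing more to add.
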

Theorem \ref{SA: l1} allows us to show the result for $D\phi D^{-1}.$ Thus, by replacing $\phi$ with $D\phi D^{-1}$ and $A_G^2$ with $A_H^2(D\Bn)$, we may assume $\phi'(0)$ is in Jordan form and retain the compactness of $C_{D\phi D^{-1}}$.  Of course the exchange is not free, and the cost for this simplification is that the domain is now $D\Bn,$ and the convenient basis vectors, namely, $\{z^{\alpha} \}_{\alpha}$ are no longer orthogonal. 

\subsection{Why Jordan Form?}
 The upshot of assuming $\phi'(0)$ is in Jordan form is that we can easily reformulate Schroder's equation into a statement about the chains of $\C$.    

\begin{definition} Let $L$ be a linear operator on a vector space.  We say $e_1, ..., e_k$ are a chain with eigenvalue $\lambda$ and length $k$ if (and only if) they are non-zero vectors satisfying  
\begin{equation}
\label{P: e1}
  (L-\lambda I) e_j=\left\{ \begin{array}{cc}
   e_{j-1} &\mbox{ if } 2\leq j \leq k\\
   0 & \mbox{ if } j=1.
   \end{array}\right.
\end{equation}  
\end{definition} 
  Note that the existence of a chain is equivalent to $[\ker(L-\lambda I)]\cap[ (L-\lambda I)^{k-1}(X)] $ being nontrivial and also that the compression of $L$ to $\langle e_1, .... , e_k\rangle $ is just the upper triangular $\lambda$-Jordan block of length $k$,
  $$\left[ \begin{matrix}
  \lambda 	& 1 	& \\
  			& \ddots & \ddots  \\ 
			&		 & \lambda 	& 1\\
			&		 &			&\lambda 
  \end{matrix}\right].$$
We also notice that reversing the indices of the chain gives $e_k, ... , e_1$, a chain of the lower triangular  $\lambda$-Jordan block
 $$\left[ \begin{matrix}
  \lambda 	&  			& \\
  	1		& \lambda	&   \\ 
			&\ddots	 	& \ddots  \\
			&		 &	1		&\lambda 
  \end{matrix}\right].$$
Consider the following (abstract) example.
\begin{example}
\label{abstract example}
Let
 $$\phi'(0)=
\left[ \begin{matrix}
\lambda & 1\\
0		&\lambda \\
 		&			&\alpha	& 1 		&0\\
 					&&0		& \alpha	&1\\
 					&&0		&0			&\alpha
 \end{matrix} \right]$$ 
 a Jordan form matrix with 2 Jordan blocks, one of length 2 and eigenvalue $\lambda$, and one of length 3 and eigenvalue $\alpha$.  For our problem it will be useful to consider the Jordan blocks as defining chains of $\phi'(0)$.  This  $\phi'(0)$ has two chains, namely 
 $$(\phi'(0)-\lambda I_5): e_2 \mapsto e_1 \mapsto 0,$$
 and
 $$(\phi'(0)-\alpha I_5): e_5 \mapsto e_4 \mapsto e_3 \mapsto 0.$$

Now consider solving Schroeder's equation with this $\phi$.  Writing $F=(f_1, ... , f_5)^t$ we see 
$$\left[\begin{matrix} \C f_1 \\ \vdots \\ \C f_5\end{matrix}\right] = 
\phi'(0) \left[\begin{matrix}  f_1 \\ \vdots \\ f_5 \end{matrix}\right] $$  if and only if 
$$(\C -\lambda I): f_1\mapsto f_2 \mapsto 0$$
 and
$$(\C-\alpha I): f_3\mapsto f_4 \mapsto f_5\mapsto 0.$$

Thus, a solution to Schroeder's equation exists if and only if $\C$ has a $\lambda$-chain of length (at least) 2, and an $\alpha$-chain of length (at least) 3.   Suppose for the moment that $\C$ has a $\lambda$-chain of length strictly greater than 2; for example, 
$$(\C -\lambda I): g_k \mapsto  ... \mapsto g_2 \mapsto g_1\mapsto  0.$$
Then only the last two elements of the chain can be used to solve Schroeder's equation; that is,  $f_2= g_2$ and $f_1=g_1$  are the only elements of the chain $\{g_i\}$ that can appear as the first two coordinate functions of $F$. This idea is made rigorous in Lemma \ref{S: three equiv. lemma}.
\end{example}

Putting $\phi'(0)$ into Jordan form provides a convenient set-up for the problem.  Specifically, it gives a basis of $\Cn$ that is a union of chains of $\phi'(0)$, and we now know that solutions to Schroeder's equation depend on finding the corresponding chains of $\C$.  The next step will be to use the compactness of $\C$ to reduce the problem to understanding the chains of a particular finite rank operator. (It is denoted $U$ throughout the paper).  Representing this operator as an $N\times N$ matrix,  and then putting it into Jordan form will yield it's chains.  So Jordan form not only simplifies the problem, but it plays a role in the main proofs of the paper.  

\section{Notation}
\label{Sec: N}
In the spirit of the previous section, we now let $\phi$ be a self-map of $\D$, an ellipsoid  (that is, $\D = D\Bn$ for some invertible matrix $D$) fixing $0$, and having $\phi'(0)$ an invertible, upper-triangular, Jordan-form matrix.  Further, we suppose the composition operator $\C$ is compact on $A_H^2(\D)$.  For convenience we will often represent elements $g$ of $\A$ as (infinite) column vectors and $\C$ as an (infinite) matrix.  To do this, given such a $g$, we know $g=\sum_{\alpha}a_{\alpha}z^{\alpha}$ with absolute convergence on $\D$.  Thus, we write 
$$g=\left[ \begin{matrix} a_{(0,0,...,0)}\\
a_{(1,0,...,0 )}\\
a_{(0,1,...,0)}\\
\vdots
\end{matrix} \right].$$  It is important to note that since we have proven only that $\sum_{j=0}^n g_j \to g$ in $\A$ with $g_j$ homogeneous polynomials of degree $j$, we understand $g=\sum_{\alpha}a_{\alpha}z^{\alpha}$ as the sub-sequential limit of homogeneous polynomials when dealing with convergence in the norm of $\A$.  

We assume the dictionary ordering on the multi-indices; specifically, ${\alpha}<{\beta}$ if and only if either $ | \alpha|<| \beta |,$ or $ | \alpha|=| \beta |$ and writing $\alpha=(\alpha_1,...,\alpha_n), \beta= (\beta_1,...,\beta_n)$ we also have $\alpha_j=\beta_j$ for $j<j_0$, and $\alpha_{j_0}>\beta_{j_0}$ for some $1\leq j_0 \leq n$.  
  
Our matrix representation of $\C$ follows from this ordering of $\{z^{\alpha}\}_{\alpha}$.  If the $j^{th}$ monomial is $z^{\alpha}$, then the $j^{th}$ column of $\C$ will be the column vector $ \phi^{\alpha}$.  For example, if we are working in $\mathbb{C}^2$ and $\phi(z)=(\phi_1,\phi_2)$ with $\phi_1(z)=\lambda_1 z_1 +\lambda_2 z_2 + \sum_{|\alpha |>1}a_{\alpha}z^{\alpha},$ and $\phi_2(z)=\zeta_2 z_2 + \sum_{|\alpha |>1}b_{\alpha}z^{\alpha}$, we see $\C: 1\mapsto 1, z_1 \mapsto \phi_1, z_2 \mapsto \phi_2$ and so forth.  Thus, 
$$ \C = \begin{array}{cc} 
  \hphantom{a}  & \begin{matrix}  & \hphantom{a}1	&\hphantom{.}\phi_1\hphantom{a}	& \phi_2\phantom{a}   & \phi_1^2\phantom{aa} & \hphantom{\dots} \end{matrix}\\
\begin{matrix}
1\\
z_1\\
z_2\\
z_1^2\\
\vdots
\end{matrix}&
					\left[ \begin{matrix}
					1 		& 0 		& 0 		& 0 	& \dots  \\
					0  		& \lambda_1	&  0		& 0			&\dots \\
					0		& \lambda_2	& \zeta_2	& 0			& \dots\\
					0		& a	_{1,0}	& b_{1,0}	& \lambda_1^2 & \dots \\
					\vdots	& \vdots	&\vdots		&\vdots 	&\ddots 

\end{matrix} \right]
\end{array}.$$

Notice that the first row and column of $\C$ will always be $(1,0, ... )$ or $(1,0, ... )^t$, respectively, so we will omit these when representing $\C$.  In other words, we will consider $\C$ acting on the  subspace of functions which fix 0.  With this omission, we see the upper left $n\times n$ corner of $\C$ is $\phi'(0)^t$.  Lastly, we note that $\phi'(0)$ upper triangular implies $\C$ is lower triangular, which is exemplified above. 

Other important observations are that  the diagonal entries of $\C$, $c_{j,j}=\partial \phi_j(0)/\partial z_j$ 
for $j=1,...,n$, and in general,  $c_{j,j}=(c_{1,1},...,c_{n,n})^{\alpha}=c_{1,1}^{\alpha_1}...c_{n,n}^{\alpha_n}$ where $z^{\alpha}$ is the $j^{th}$ monomial.  This notation follows the literature.  A more thorough treatment can be found in \cite{CM03}.  

Since an upper triangular $\phi'(0)$ implies $\C$ is lower triangular, when putting $\phi'(0)$ into Jordan form, we will opt for the upper triangular version.  As $\phi'(0)^t$ appears in $\C$, we can see that lower triangular Jordan blocks will appear on the left side of Schroeder's equation while their transposes, upper triangular Jordan blocks, appear on the right side.  For notational ease we agree that  $J$ (or often $J_i$ with $i\in \mathbb{N}$) will denote a lower triangular Jordan block throughout the paper, and we will use $J^t$ to denote the upper triangular counterpart.  Recall that a Jordan matrix is a matrix with either all upper triangular  or all lower triangular Jordan blocks along the diagonal, and all other entries 0.  In particular, the Jordan matrix $\phi'(0)$ will be written in block form as 
$$ \phi'(0)=\left[ \begin{matrix} J_1^t \\ & \ddots & J_m^t \end{matrix}\right]$$
with each $J_j^t$ an upper triangular Jordan block. We call the subspace on which $J_i^t$ acts, say $S_i$, a Jordan subspace.  This is made rigorous in the definition below.    
\begin{definition}
Given a linear operator $L$ on a vector space $X$, we say $S$ is a Jordan subspace of $L$ with eigenvalue $\lambda$ if (and only if) 
\begin{enumerate}
\item $X=S\oplus T$ with $T$ an algebraic complement of $S$. 
\item $L=\left[\begin{matrix} J & 0\\ 0 & K
\end{matrix}\right],$ with $J$ and $ K$ the compressions of $L$ to $S$ and $ T$, respectively.  
\item $J$ is a $\lambda$-Jordan block. 
\end{enumerate}
   We will say $J$ has length $k=\dim(S)$. 
 \end{definition}
  Since $J$ is lower triangular there is a basis $e_1, ... , e_k$ for $S$ such that 
\begin{equation}
\label{P: e1}
  (L-\lambda I) e_j=\left\{ \begin{array}{cc}
   e_{j+1} &\mbox{ if } 1\leq j \leq k-1\\
   0 & \mbox{ if } j=k,
   \end{array}\right.
\end{equation}   
 or concisely, $e_1, ... , e_k$ is a chain basis for $S$, with $e_k$ as the eigenvector.  Reversing this order of this chain shows the definition is unaffected by using $J^t$ in place of $J$.  We remark that $k$ is the maximal length of a chain of $L$ with eigenvector $e_k$, and also that $K$ may have another chain with eigenvalue $\lambda$.  For example, 
 $$\begin{matrix}e_1 \\ e_2 \\ e_3 \end{matrix} \left[\begin{matrix} 
 \lambda & 1 		&0\\ 
		0& \lambda 	&0\\
		0&	0		& \lambda 
 \end{matrix}\right]$$ 
 has $\langle e_1, e_2 \rangle$ and $\langle e_3 \rangle $ both as Jordan subspaces.  Lemma \ref{LA: chain lin. ind lemma}, which states that a set of $m$ chains with linearly independent eigenvectors is also linearly independent, will be used even before it is proven in Section \ref{Sub Sec: LA}.    

In light of the above discussion, our goal is now to determine necessary and sufficient conditions for the existence of a full rank solution for $\phi$ a self-map of the ellipsoid $D\Bn$ fixing $0$, $\phi'(0)$ an invertible, upper-triangular,  Jordan matrix, and $\C$ compact on $A_H^2(\mathcal{D})$.     
We now have a Jordan decomposition  induced by $\phi'(0)^t,$ 
$$\Cn =S_1 \oplus...\oplus S_m,$$ 
and  $\phi'(0)^t=$ diag$(J_1, ... , J_m)$ 
 with each $J_j$ a lower-triangular Jordan block.  That is, the compression of $\phi'(0)^t$ to $S_j$ is $J_j$, and $S_j$ is a Jordan subspace of $\phi'(0)^t.$  Let $n_j:=\dim(S_j),$ so $J_j$ is $n_j \times n_j$  with eigenvalue $\lambda_j$.  It follows that $n_1+ ... + n_m=n$.  Notice also that the $\lambda_j$ need not be distinct.

   We will let the chain $e_1^j, ... , e_{n_j}^j$ be  our basis of $S_j$, so that 
   $$\{e_1^1, ... , e_{n_1}^1, ... , e_1^m, ..., e_{n_m}^m\}$$
    is our full basis on $\Cn$.  Quite explicitly, we now have 
    $$J_j=\begin{matrix} 
 e_1^j\\ e_2^j\\ \vdots \\ e_{n_j}^j   
    \end{matrix}\left[ \begin{matrix}
 \lambda_j 	& 			&\\
 1			&\lambda_j 	& \\
 			&\ddots 	& \ddots \\
 			&			&1		& \lambda_j
  \end{matrix}\right].$$  
 
   Just as we represent $\phi'(0)^t$ in block form, we also write $F=(F_1, ... , F_m)^t$ in block form, with each $F_j$ a column-vector with length $n_j$.  So, each 
   $$F_j=\left[\begin{matrix} f_1^j\\ \vdots \\ f_{n_j}^j \end{matrix} \right]$$ 
   is a function from $\mathcal{D}\to \mathbb{C}^{n_j}$ with each $f_i^j:\mathcal{D} \to \mathbb{C}$.  In other words, $F_1$ is the first $n_1$ components of $F$, then $F_2$ is the next $n_2$ components of $F$, and so forth.  Schroeder's equation becomes 
 $$\left[ \begin{matrix} F_1\circ \phi \\
  \vdots \\
  F_m\circ \phi 
 \end{matrix}  \right] = F\circ \phi=\phi'(0) F= 
\left[ \begin{matrix}
 J_1^t & \dots & 0\\
\vdots	 & \ddots& \vdots\\
0	&\dots & J_m^t
 \end{matrix}\right] 
 \left[ \begin{matrix} F_1 \\
  \vdots \\
  F_m
 \end{matrix}  \right] = 
 \left[ \begin{matrix} J_1^t F_1 \\
  \vdots \\
  J_m^t F_m
 \end{matrix}  \right]. $$
 Thus, a Schroeder solution exists exactly when we can solve  $F_j\circ\phi=J_j^tF_j$ for each $j$ and the function $F=(F_j)^t$ has full rank near 0.  
 
 Unpacking notation we write $F_j=(f_1^j, ... , f_{n_j}^j)^t, $ and see  $F_j\circ\phi=J_j^tF_j$ becomes 
 \begin{equation}
  \label{S: e1}
  \left[ \begin{matrix}f_1^j\circ\phi \\ \vdots \\ f_{n_j-1}^j\circ\phi \\ f_{n_j}^j\circ\phi   \end{matrix}\right]
 =\left[ \begin{matrix}
 \lambda_j 	& 1			& \dots   & 0\\
 0			& \ddots 	& \ddots   & \vdots \\
 \vdots		& \ddots	& \lambda_j  & 1 \\
0 			& \dots			& 0 		& \lambda_j
 \end{matrix}\right]
 \left[\begin{matrix}
 f_{1}^j \\ \vdots \\ f_{n_j-1}^j \\ f_{n_j}^j
 \end{matrix}\right]=
 \left[\begin{matrix} \lambda_j f_{1}^j +f_{2}^j \\
 \vdots \\
 \lambda_j f_{n_j-1}^j +f_{n_j}^j\\
 \lambda_j f_{n_j}^j \end{matrix} \right].
 \end{equation}
 In other words, $f_1^j, ... , f_{n_j}^j$ is a chain of $\C$ with eigenvalue $\lambda_j$.  

 \begin{lemma}
\label{S: three equiv. lemma}
The following statements are equivalent. 
\begin{enumerate}
\item There exists an $F_j=(f_1^j, ... , f_{n_j}^j)^t$ with components linearly independent in $\h$ satisfying $F_j\circ \phi =J_j^tF_j$.  
\item $[\ker(\C-\lambda_jI)] \cap [(\C-\lambda_jI)^{n_j-1}(\A)]$ is nontrivial.  
\item $\C$ has a chain of length $n_j$ and eigenvalue $\lambda_j$.  
\end{enumerate} 
 \end{lemma}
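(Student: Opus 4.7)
The plan is to unpack the definitions and use the matrix reformulation of Schroeder's equation already displayed in equation (\ref{S: e1}), together with Lemma \ref{LA: chain lin. ind lemma} (stated in the excerpt for use in what follows). I will establish (1) $\Leftrightarrow$ (3) and (3) $\Leftrightarrow$ (2) separately, since each equivalence is essentially a change of viewpoint rather than a substantive computation.

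First I would show (1) $\Leftrightarrow$ (3). Equation (\ref{S: e1}) rewrites $F_j\circ\phi=J_j^tF_j$ as the system $(\C-\lambda_jI)f_k^j=f_{k+1}^j$ for $1\leq k\leq n_j-1$ together with $(\C-\lambda_jI)f_{n_j}^j=0$. Reindexing via $e_i:=f_{n_j+1-i}^j$ for $1\leq i\leq n_j$ turns this into precisely the chain relations of the paper's chain definition. For (1) $\Rightarrow$ (3) it suffices to note that linear independence of the $f_k^j$ forces each $e_i$ to be nonzero, so the $e_i$ really do form a chain of length $n_j$ with eigenvalue $\lambda_j$. Conversely, given a chain $e_1,\dots,e_{n_j}$ of $\C$ as in (3), I set $f_k^j:=e_{n_j+1-k}$; the chain equations deliver (\ref{S: e1}), and linear independence of the $f_k^j$ is immediate from Lemma \ref{LA: chain lin. ind lemma} applied to the single chain $e_1,\dots,e_{n_j}$ (whose eigenvector is trivially linearly independent as a one-element set).

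Next I would handle (3) $\Leftrightarrow$ (2). For (3) $\Rightarrow$ (2), a chain $e_1,\dots,e_{n_j}$ of length $n_j$ provides $e_1\in\ker(\C-\lambda_jI)$ with $e_1=(\C-\lambda_jI)^{n_j-1}e_{n_j}\in(\C-\lambda_jI)^{n_j-1}(\A)$ and $e_1\neq 0$, witnessing nontriviality of the intersection. For (2) $\Rightarrow$ (3), pick a nonzero $h$ in the intersection and write $h=(\C-\lambda_jI)^{n_j-1}g$ for some $g\in\A$. Define $e_i:=(\C-\lambda_jI)^{n_j-i}g$ for $1\leq i\leq n_j$; then $(\C-\lambda_jI)e_i=e_{i-1}$ for $i\geq 2$ and $(\C-\lambda_jI)e_1=(\C-\lambda_jI)h=0$. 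The one point requiring a line of argument is that each $e_i$ is nonzero: if some $e_{i_0}=0$, then applying $\C-\lambda_jI$ repeatedly would force $e_1=h=0$, contradicting the choice of $h$.

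No step here is genuinely difficult. The main potential pitfall is bookkeeping with orientations, namely remembering that the chain definition in the excerpt places the eigenvector at the bottom ($e_1$) while the block $J_j^t$ on the right-hand side of Schroeder's equation is upper-triangular, so in the correspondence with components of $F_j$ one must reverse indices. Once that is fixed, the rest is a direct reading of (\ref{S: e1}) and the chain definition.
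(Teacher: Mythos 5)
Your proof is correct and follows essentially the same route as the paper: both use the reformulation in (\ref{S: e1}) together with Lemma \ref{LA: chain lin. ind lemma} for the equivalence of (1) and (3), and both read (2) $\Leftrightarrow$ (3) off the definition of a chain. You are merely more explicit than the paper about the index reversal and about why each chain element is nonzero, which the paper's proof leaves implicit.
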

\begin{proof}
The work above shows that $F_j=(f_1^j, ... , f_{n_j}^j)^t$ satisfies $F_j\circ \phi = J_j^tF_j$ if and only if $\{f_i^j\}_i$ forms a chain of $\C$ with eigenvalue $\lambda$.  Lemma \ref{LA: chain lin. ind lemma} shows the $f_i^j$ are linearly independent.  Thus (1) and (3) are equivalent.  The equivalence of (2) and (3) is the definition of a chain. 
\end{proof}
We have reduced the problem of finding an $F$ satisfying $F\circ \phi=\phi'(0) F$ to that of finding chains of $\C$ which correspond to those of $\phi'(0)$.  We now are ready to produce solutions (Section \ref{Sec. Sw/oCRN0}), and then discriminate which have full rank near 0 (Sections \ref{Sec: R} and \ref{Sec: CfaFRS}).  

\section{Solutions Without Consideration of Rank Near 0}
\label{Sec. Sw/oCRN0}
 
 Forgetting the behavior of $F$ near 0, we focus on a much more modest goal of producing a solution, $F$, whose component functions are linearly independent in $\A$.  Unlike a full rank solution, a linearly independent solution necessarily exists (Theorems \ref{S: t3} and \ref{S: c3}), yet, the process by which we develop a linearly independent solution will provide the groundwork for understanding exactly when  we are equipped with a full rank solution.  Specifically, we will use compactness of $\C$ to reduce the problem to one of linear algebra, and Subsection \ref{Sub Sec: LA}, a section of linear algebra theorems, provides the lion's share of work for both Sections \ref{Sec. Sw/oCRN0} and \ref{Sec: CfaFRS}.

The main technique in \cite{CM03}, as well as the current note is to exhibit a Bergman space on which $\C$ is compact, and then use the compactness to produce our solutions.  The following lemma and corollaries roughly say that any compact operator when viewed as an infinite matrix acts like a sufficiently large (finite) upper left corner.  Consequently, we may reduce the problem of existence of a solution $F$ to an analogous problem for an operator on $\CN$.  

\begin{theorem}
\label{compactness theorem}
 Let $C$ be a lower-triangular compact operator on a separable Hilbert space, $\mathcal{H}$.  Given any $\lambda \neq 0$ the following are true.  
 \begin{enumerate}
 \item We can write $\mathcal{H}=H_1 \oplus H_2$ with $H_1\cong \mathbb{C}^N$  and $$C= \left[ \begin{matrix}
 U&0\\
 V&W
\end{matrix} \right]$$ 
 so that $|\lambda| > \| W\|$.
\item $\ker(C-\lambda I) \cong \ker(U-\lambda I_N)$, where $U$ is $N\times N$. 
\end{enumerate}
\end{theorem}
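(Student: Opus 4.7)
Fix an orthonormal basis $\{v_n\}_{n\ge 1}$ of $\mathcal{H}$ with respect to which $C$ is lower-triangular, and for each $N$ set $H_1=\mathrm{span}(v_1,\dots,v_N)\cong\CN$, $H_2=H_1^{\perp}$, with $P_1,P_2$ the corresponding orthogonal projections. The plan is to read off the block form of $C$ from lower-triangularity, shrink the norm of its lower-right block by pushing $N$ large using the compactness of $C$, and then solve the kernel equation by Neumann-series inversion.

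\textbf{Block form and norm bound.} Lower-triangularity gives $Cv_j\in\overline{\mathrm{span}}(v_j,v_{j+1},\dots)$ for each $j$, so $Cv_j\in H_2$ whenever $j>N$; hence $H_2$ is $C$-invariant and the $(1,2)$-block of $C$ in the decomposition $\mathcal{H}=H_1\oplus H_2$ vanishes:
\[
C=\begin{pmatrix} U & 0\\ V & W\end{pmatrix},\qquad U=P_1CP_1,\ V=P_2CP_1,\ W=P_2CP_2.
\]
As $N\to\infty$, $P_2\to 0$ in the strong operator topology. Combined with the compactness of $C$, the standard fact that $\|S_NC\|\to 0$ for strongly null $S_N$ and compact $C$ yields $\|P_2C\|\to 0$, hence $\|W\|\le\|P_2C\|\to 0$. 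Choosing $N$ large enough gives $\|W\|<|\lambda|$, establishing (1).

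\textbf{Kernel isomorphism.} For $x=x_1+x_2\in H_1\oplus H_2$, the block decomposition shows that $x\in\ker(C-\lambda I)$ is equivalent to the coupled system
\[
(U-\lambda I_N)x_1=0,\qquad Vx_1+(W-\lambda I)x_2=0.
\]
Because $\|W\|<|\lambda|$, $\lambda I-W$ is invertible on $H_2$ by Neumann series, so the second equation uniquely determines $x_2=-(W-\lambda I)^{-1}Vx_1$ from $x_1$. Hence the first-coordinate projection $x\mapsto P_1x$ is a vector-space isomorphism $\ker(C-\lambda I)\to\ker(U-\lambda I_N)$, with explicit inverse $x_1\mapsto x_1-(W-\lambda I)^{-1}Vx_1$, yielding (2).

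\textbf{Main obstacle.} There is no serious obstacle; the argument is just block-matrix bookkeeping plus one Neumann-series inversion. The one subtlety worth flagging is that compactness must be applied on the correct side: $\|S_NC\|\to 0$ uses the precompactness of $C(B_\mathcal{H})$ together with uniform convergence of strongly null sequences on compact sets, whereas the reversed statement $\|CS_N\|\to 0$ can fail in general and is not needed here.
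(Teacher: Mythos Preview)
Your argument is correct. Note that the paper does not actually prove this theorem: it declares the result well known and refers the reader to \cite[Lemma~9]{CM03} for details. Your approach---using strong convergence $P_2\to 0$ together with compactness of $C$ to force $\|P_2C\|\to 0$, and then a Neumann-series inversion of $W-\lambda I$ to set up the kernel bijection---is precisely the standard one that reference contains, so there is nothing further to compare.
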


It follows easily from this theorem that $\sigma(C)\setminus \{0\} = $ diag$(C)\setminus \{0\}$, where $\sigma(C)$ denotes the spectrum of $C$. As these results are well known, we omit the proof.  The details may be found in \cite[Lemma 9]{CM03}.  The following corollary states the ideas of Theorem \ref{compactness theorem} in a way that is most useful for our purposes with explanations in the paragraphs that follow.      

 \begin{corollary}
 \label{A:l1}
 Let $C$ be a lower-triangular compact operator on a separable Hilbert space, $\mathcal{H}$.  Given any $\lambda \neq 0$ the following are true.  
 \begin{enumerate}
 \item We can write $\mathcal{H}=H_1 \oplus H_2$ with $H_1\cong \mathbb{C}^N$  and $$C= \left[ \begin{matrix}
 U&0\\
 V&W
\end{matrix} \right]$$ 
 so that $\lambda \notin \sigma_p(W)$, the eigenvalues of $W$.
\item $\ker(C-\lambda I) \cong \ker(U-\lambda I_N)$, where $U$ is $N\times N$. 
\item Any formal solution, $h$ to $(C-\lambda I)h=0$ is in $\h$.
\end{enumerate}
\end{corollary}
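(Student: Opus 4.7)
Parts (1) and (2) follow immediately from Theorem \ref{compactness theorem}: the bound $|\lambda| > \|W\|$ forces $\lambda \notin \sigma(W) \supseteq \sigma_p(W)$, and the kernel statement is verbatim. So the only new content is (3). The plan is to split $h = (h_1, h_2)^t$ along the decomposition $\h = H_1 \oplus H_2$ provided by Theorem \ref{compactness theorem} and argue that the tail $h_2$ --- a priori just a formal sequence of coefficients --- in fact lies in $H_2$.

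Writing $C$ in block form, the equation $(C - \lambda I) h = 0$ unpacks into
\[
(U - \lambda I_N) h_1 = 0, \qquad V h_1 + (W - \lambda I) h_2 = 0.
\]
Since $H_1 \cong \CN$ is finite-dimensional, $h_1 \in \h$ automatically, and since $V \colon H_1 \to H_2$ is bounded (indeed finite-rank, as $H_1$ is finite-dimensional), the vector $-V h_1$ genuinely belongs to $H_2$.

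The key step is to pass from the formal equation $(W - \lambda I) h_2 = -V h_1$ to the assertion $h_2 \in H_2$. As a sub-block of the lower-triangular $C$, the operator $W$ is itself lower triangular, and since $|\lambda| > \|W\|$ the operator $W - \lambda I$ is invertible on $H_2$, so all of its diagonal entries are nonzero. Set $\tilde h_2 := -(W - \lambda I)^{-1} V h_1 \in H_2$. Because $W - \lambda I$ is lower triangular with nonzero diagonal, the formal system $(W - \lambda I) h_2 = -V h_1$ can be solved row by row by forward substitution, so it admits a unique formal solution. The formal $h_2$ and the genuine $\tilde h_2$ are both such solutions, so they coincide, forcing $h_2 \in H_2$ and hence $h \in \h$.

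The only real obstacle is bridging between the formal world (an infinite column of coefficients on which $C$ acts by row-finite matrix multiplication, meaningful precisely because $C$ is lower triangular) and the Hilbert-space world (an $\ell^2$-element on which $C$ acts as a bounded operator). The bridge is uniqueness of solutions to a lower-triangular linear system with nonzero diagonal entries, and two ingredients make it go through: boundedness of $V$ together with finite-dimensionality of $H_1$ places $-V h_1$ inside $H_2$, while the spectral gap $|\lambda| > \|W\|$ guarantees the diagonal of $W - \lambda I$ never vanishes.
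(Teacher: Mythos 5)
Your argument is correct and supplies exactly the reasoning the paper leaves implicit (the paper explicitly omits the proof, citing \cite[Lemma 9]{CM03} and giving only a verbal sketch in the paragraphs following the corollary): parts (1)--(2) are immediate from Theorem \ref{compactness theorem}, and for (3) you correctly identify the formal tail $h_2$ with the genuine element $-(W-\lambda I)^{-1}Vh_1 \in H_2$ via uniqueness of forward substitution in a lower-triangular system with nonzero diagonal. Two small remarks that would tighten it: the nonzero-diagonal claim for $W-\lambda I$ follows most directly from $|w_{jj}| \le \|W\| < |\lambda|$, with no need to invoke the (true but less elementary) fact that an invertible lower-triangular operator has nonzero diagonal; and the agreement between the Hilbert-space action of $W-\lambda I$ on $\tilde h_2$ and the formal row-finite matrix product deserves a word, since the monomials are not orthogonal in $A_H^2(D\Bn)$ --- it holds because each row of a lower-triangular matrix is finitely supported, so each Taylor coefficient of $(W-\lambda I)\tilde h_2$ is a finite sum and therefore computed formally.
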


Part (1) of the corollary below is stated for usability. In practice, given the lower triangular matrix $\C$ and a fixed $\lambda$, it is much easier to choose the upper left corner $U$ so that it contains every occurrence of $\lambda$ on the diagonal (recall that $\C$ is compact so the diagonal entries, i.e. its eigenvalues, converge to 0) than it is to choose $U$ so that $\| W\|<|\lambda|.$  As in the results of \cite{CM03}, to prove our main theorems we will choose $U$ to include every eigenvalue of  $\phi'(0)$ that appears on the diagonal of $\C$.  

The important consequence of (2) is that given any 
$$h_1=\left[ \begin{matrix}x_1 \\ \vdots \\x_N\end{matrix}\right] \in \ker(U-\lambda I_N),$$
 there exists a unique completion of this vector, that is, a unique 
$$h_2=\left[\begin{matrix}x_{N+1}\\ \vdots \end{matrix}\right]\in H_2 $$  
so that 
$$h_1+h_2=\left[\begin{matrix}x_1 \\ \vdots \\x_N \\ x_{N+1} \\ \vdots \end{matrix} \right] \in \ker(\C-\lambda I).$$
 
By a formal solution, we mean a column vector $h=(x_1, x_2, ... )^t$ which satisfies the equation $C h=\lambda h$.  No concern is given to the convergence of $h=\sum x_je_j$ in $\h$.  To see the use of (3), consider writing the equation $C h=\lambda h$ with $C$ as a given lower triangular matrix and $\lambda$ also fixed.  It gives an (infinite) system of equation, the first equation depending only on $x_1$, the second depending on $x_1$ and $x_2$ and so on.  If $x_j$ can be chosen inductively to satisfy our algebraic requirements, then (3) tell us that $\sum x_je_j$ converges in $\h$. In particular, when $\h$ is a Hilbert space of analytic functions on a domain $\mathcal{D}$ we see the $x_j$ are the coefficients of the Taylor series of an analytic function, and (3) says that this corresponding Taylor series actually converges on $\mathcal{D}$.  

We reiterate that the collection of ideas in Theorem \ref{compactness theorem} and Corollary \ref{A:l1} are well know and certainly the use of compactness of $\C$ to find analytic solution is evident in \cite{CM03}.  Perhaps it is the thorough investigation of these ideas that allowed proofs of the main results of this paper.  
As a last remark, we note that compactness is essential here.  For example, the right and left shift operators are lower and upper triangular, respectively, and have the closed unit ball as their spectrum.  In particular, the left shift has the entire open disk as it's point spectrum, but only 0 appears on their diagonal.  



Let $P$ be the orthogonal projection of $\h:=\A$ to the subspace spanned by those multi-indices of order less than or equal to $K$; that is, $$P \h= \langle z_1, z_2, ... , z_n, z_1^2, z_1z_2, ... , z_n^2, ... , z_1^K, z_1^{K-1}z_2, ... ,  z_n^K \rangle.$$  Now we may write 
\begin{equation}
\label{S: e1.5}
\C= \left[ \begin{matrix}
 U&0\\
 V&W
\end{matrix} \right]
\end{equation} so that  $U=P \C P$ with $K$ sufficiently large so that $\lambda_j$ does not occur on the diagonal of $W$ for every $j=1, ... , m$.  Following the notation above, let $H_1\oplus H_2= \h$ and $ H_1=P\h$ of dimension $N$, so that $U$ is $N\times N.$  
We also let $Q$ denote the orthogonal projection from $\h$ to the first $n$ coordinates.  That is,  $$Q\h=\langle z_1, ... , z_n\rangle,$$  and we note that $Qf=\nabla f (0)$ for any $f$ in $\h$. Since $\p$ is the upper left $n\times n$ corner of $\C$, we have  $\phi'(0)^t=Q\C Q$, and $$\langle z_1, ... , z_n\rangle =S_1\oplus ... \oplus S_m,$$ our Jordan decomposition of $\p$.  Recall that the $S_j$ are not necessarily orthogonal subspaces of $\h$, so we let $Q_j$ be the projection to $S_j$ along $(\oplus _{i\neq j}S_i)\oplus (I-Q)\h$.  In particular,  $Q=Q_1+...+Q_m$.  

Recall that a solution depends on the chains of $\C$.  Now we use compactness to reduce the problem to understanding the chains of $U$, our sufficiently large upper left corner of $\C$.  For notational ease, we will use $``e_j"$ to denote chains of $\phi'(0)$ or $\phi'(0)^t$ (these are elements of $\Cn$), and we will use $``\E_j"$ to denote chains of $U$ (these are vectors in $\CN$).    
\begin{lemma}
\label{S: U, C chain correspondence}
With the notation developed just above, suppose $0\neq \lambda$ and $\lambda \notin \sigma(W)$.  If $\E_1, ..., \E_{s}$ is a chain of $U$ with eigenvalue $\lambda$,  then there is a unique chain $g_1, ... , g_s$ of $\C$ with the same eigenvalue such that $Pg_j=\E_j.$  Conversely, if $g_1, ... , g_s$ is a chain of $\C$ with eigenvalue $\lambda$, then $Pg_j=\E_j$ is a chain of $U$ with the same eigenvalue.  
\end{lemma}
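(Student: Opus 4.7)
The plan is to handle the two implications separately, in each case leveraging the block decomposition
$$\C=\left[\begin{matrix} U & 0\\ V & W\end{matrix}\right]$$
with respect to $\h=H_1\oplus H_2$. The hypothesis $\lambda\notin\sigma(W)$ makes $W-\lambda I$ invertible on $H_2$ with bounded inverse, and together with Corollary \ref{A:l1} this is precisely what is needed to pass chain relations between $U$ and $\C$.

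For the forward direction, I would look for the lifted chain in the form $g_j=\E_j+h_j$ with $h_j\in H_2$, and decompose the chain equations $(\C-\lambda I)g_1=0$ and $(\C-\lambda I)g_j=g_{j-1}$ ($j\geq 2$) into their $H_1$- and $H_2$-components. The $H_1$-components reduce to the chain relations for the $\E$'s, which hold by hypothesis. The $H_2$-component for $j=1$, namely $V\E_1+(W-\lambda I)h_1=0$, determines $h_1$ uniquely and is exactly the unique completion provided by Corollary \ref{A:l1}(2). For $j\geq 2$ the $H_2$-component becomes
$$(W-\lambda I)h_j=h_{j-1}-V\E_j,$$
which again has a unique solution $h_j\in H_2$ because $W-\lambda I$ is invertible. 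Induction on $j$ then produces the desired unique chain $g_1,\dots,g_s$ in $\h$; each $g_j\neq 0$ because $Pg_j=\E_j\neq 0$.

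For the converse, starting from a chain $g_1,\dots,g_s$ of $\C$ with eigenvalue $\lambda$, I would apply $P$ to each chain equation and use that $H_2$ is $\C$-invariant (the upper-right block is $0$) to conclude $P\C g_j=P\C P g_j=U\E_j$. The chain relations therefore descend to $(U-\lambda I_N)\E_1=0$ and $(U-\lambda I_N)\E_j=\E_{j-1}$. The only remaining issue is to verify that each $\E_j$ is nonzero. I would prove this by downward induction: if $Pg_j=0$ for some $j$, then $g_j\in H_2$, and the $\C$-invariance of $H_2$ forces $g_{j-1}=(\C-\lambda I)g_j\in H_2$, so $\E_{j-1}=0$ as well. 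Iterating yields $\E_1=Pg_1=0$, whence $g_1\in H_2\cap\ker(\C-\lambda I)$; but the isomorphism $\ker(\C-\lambda I)\cong\ker(U-\lambda I_N)$ from Corollary \ref{A:l1}(2) then forces $g_1=0$, contradicting the chain hypothesis.

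I expect the main obstacle to be mostly clerical rather than conceptual: once the block structure is in hand, invertibility of $W-\lambda I$ on $H_2$ and the completion isomorphism of Corollary \ref{A:l1} do the real work, and the proof amounts to carefully tracking uniqueness and nonvanishing through the inductive construction. In particular, confirming that the inductively defined $g_j$'s really lie in $\h$ rather than being mere formal solutions is exactly what the boundedness of $(W-\lambda I)^{-1}$ on $H_2$ guarantees, so no additional analytic input is needed.
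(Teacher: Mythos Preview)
Your proposal is correct. For the converse you and the paper argue essentially the same way: the paper checks only that $\E_1\neq 0$ (if $\E_1=0$ then $g_1\in H_2$ and $Wg_1=\lambda g_1$, contradicting $\lambda\notin\sigma(W)$), which suffices since $(U-\lambda I)\E_{j+1}=\E_j\neq 0$ forces all later $\E_j\neq 0$; your downward induction is an equivalent repackaging.

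The forward direction genuinely differs. The paper does \emph{not} solve $(\C-\lambda I)g_j=g_{j-1}$ step by step. Instead, having built $g_1,\dots,g_{s-1}$, it expands $(\C-\lambda I)^s=C+(-\lambda)^s I$ with $C$ compact and lower triangular, applies Corollary~\ref{A:l1} to this new operator to obtain the unique $g_s$ with $Pg_s=\E_s$ and $(\C-\lambda I)^s g_s=0$, and then uses the uniqueness of the earlier $g_{s-k}$ to identify them with $(\C-\lambda I)^k g_s$. Your route is more elementary: writing $g_j=\E_j+h_j$ and solving $(W-\lambda I)h_j=h_{j-1}-V\E_j$ inductively uses only the bounded inverse $(W-\lambda I)^{-1}$ guaranteed by $\lambda\notin\sigma(W)$, and avoids the detour through powers of $\C-\lambda I$ and a second appeal to compactness. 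The paper's approach has the mild aesthetic advantage of invoking Corollary~\ref{A:l1} uniformly at every stage, but your direct block-triangular computation is cleaner, makes uniqueness immediate, and shows more clearly that no analytic input beyond invertibility of $W-\lambda I$ on $H_2$ is needed.
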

\begin{proof}
Assume first that $\E_1, ..., \E_{s}$ is a chain of $U$ with eigenvalue $\lambda$; that is 
\begin{equation}
\label{equation U chain}
(U-\lambda I_N): \E_s \mapsto ... \mapsto \E_1 \mapsto 0.
\end{equation}
An application of Corollary \ref{A:l1} furnishes the existence of a unique $g_1$ such that $Pg_1=\E_1$, and $(C-\lambda I)g_1=0$. Inductively, suppose that for each $1\leq i\leq s-1$ there are unique $g_i$ such that $Pg_i=\E_i$ and
\begin{equation}
\label{S: e3}
(\C-\lambda I)^k g_i=\left\{ \begin{array}{rr}
0 &\mbox{ if } k\geq i\\
g_{i-k} &\mbox{ if } k<i.
\end{array}\right. 
\end{equation}
Now write $(\C-\lambda I)^s=C+(-\lambda)^sI$ with $C=(\C^s-s\lambda \C^{s-1}+... +s(-\lambda)^{s-1}\C$).  Since $\C$ is compact and lower triangular, so also is $C$.  An application of  Corollary \ref{A:l1} to $C+ (-\lambda I)^sI$ produces a unique $g_s$ satisfying $Pg_s=\E_s$ and $(\C-\lambda I)^sg_s =(C+ (-\lambda I)^sI)g_s=0$.  Next for any $1\leq k \leq s-1$ we see $P(\C-\lambda I)^kg_s =(U-\lambda I_N)^k\E_s=\E_{s-k}$.  Uniqueness of $g_{s-k}$ now implies that  $g_{s-k}=(\C-\lambda I)^kg_s$ completing our induction step.  

For the converse suppose $g_1, ... , g_s$ form a chain of $\C$ with eigenvalue $\lambda$.  Clearly, $\E_j:=Pg_j$ satisfies (\ref{equation U chain}).  We must show $\E_1\neq 0$. But if $\E_1=0$, then $\lambda g_1=\C g_1= W g_1$, a contradiction to $\lambda \notin \sigma(W)$. 
\end{proof}  

\begin{theorem}
\label{S: three equiv. Theorem}
With notation as above the following statements are equivalent. 
\begin{enumerate}
\item There exists an $F_j=(f_1^j, ... , f_{n_j}^j)^t$ with components linearly independent in $\h$ satisfying $F_j\circ \phi =J_j^tF_j$.  
\item $[\ker(U-\lambda_jI)] \cap [(U-\lambda_jI)^{n_j-1}(\h)]$ is nontrivial.  
\item $U$ has a chain of length $n_j$ and eigenvalue $\lambda_j$.  
\end{enumerate}
Moreover, when these three conditions hold, $\E_i^j:=Pf_i^j$ is the the chain in \mbox{(3)} and a chain $\E_i^j$ of $U$ uniquely determines such an $F_j=(f_i^j)^t$.    
\end{theorem}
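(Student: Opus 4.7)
The plan is to piece this theorem together from two already-established results: Lemma \ref{S: three equiv. lemma}, which is the identical statement with $\C$ in place of $U$, and Lemma \ref{S: U, C chain correspondence}, which establishes a length-preserving bijection between chains of $U$ and chains of $\C$ at the eigenvalue $\lambda_j$. The hypothesis needed for that bijection, namely $\lambda_j \notin \sigma(W)$, is exactly what the upper-left corner $U$ was constructed to guarantee.

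First I would handle (1)$\iff$(3). By Lemma \ref{S: three equiv. lemma}, (1) is equivalent to $\C$ having a chain of length $n_j$ with eigenvalue $\lambda_j$. By Lemma \ref{S: U, C chain correspondence}, every chain of $U$ with eigenvalue $\lambda_j$ lifts uniquely to a chain of $\C$ of the same length and same eigenvalue, and conversely the projections $Pg_i$ of any $\C$-chain form a $U$-chain of the same length. In particular, $U$ has a $\lambda_j$-chain of length $n_j$ exactly when $\C$ does, giving (1)$\iff$(3).

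Next, (2)$\iff$(3) is unpacked from the definition of a chain: if $v$ is a nonzero element of $[\ker(U-\lambda_j I)] \cap [(U-\lambda_j I)^{n_j-1}(\h)]$, choose $w \in \h$ with $v=(U-\lambda_j I)^{n_j-1}w$, and set $\E_i=(U-\lambda_j I)^{n_j-i}w$; these are nonzero (otherwise $\E_1=v=0$) and form a chain of length $n_j$ with eigenvalue $\lambda_j$. Conversely, the eigenvector $\E_1$ of any length-$n_j$ chain lies in that intersection by construction.

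Finally, the ``moreover'' clause is already contained in Lemma \ref{S: U, C chain correspondence}: the assignment $f_i^j \mapsto Pf_i^j = \E_i^j$ is a bijection between $\C$-chains and $U$-chains at the eigenvalue $\lambda_j$, and combining this with Lemma \ref{S: three equiv. lemma} (which identifies the components $f_i^j$ of $F_j$ with the entries of a $\C$-chain) shows that the $U$-chain $\E_1^j,\dots,\E_{n_j}^j$ uniquely determines $F_j$. The only point requiring vigilance is confirming that the chain produced for $U$ has length \emph{exactly} $n_j$ rather than merely at least $n_j$, but this is automatic since the correspondence of Lemma \ref{S: U, C chain correspondence} preserves chain length on the nose; there is no real obstacle.
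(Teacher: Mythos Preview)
Your proposal is correct and follows essentially the same route as the paper: both arguments reduce (1)$\iff$(3) to Lemma~\ref{S: three equiv. lemma} combined with the chain correspondence of Lemma~\ref{S: U, C chain correspondence}, and both treat (2)$\iff$(3) as an immediate consequence of the definition of a chain. The only cosmetic difference is that the paper invokes Lemma~\ref{LA: chain lin. ind lemma} separately for the linear independence of the $f_i^j$, whereas you (legitimately) fold that into the use of Lemma~\ref{S: three equiv. lemma}, which already contains it.
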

\begin{proof}
It follows from Lemma \ref{S: three equiv. lemma} that $F_j$ is a solution if and only if the $f_i^j$ form a chain.  Suppose first $f_1, ..., f_{n_j}^j$ is a chain of $\C$ with eigenvalue $\lambda_j$, then clearly $Pf_i^j$, $i=1, ..., n_j$ is a chain of $U$.  Conversely, given a chain of $U$, say $\E_1^j, ... , \E_{n_j}^j$ with eigenvalue $\lambda$, Lemma \ref{S: U, C  chain correspondence} shows that there is a unique solution $F_j=(f_1^j, ... , f_{n_j}^j)^t$ with $PF_i^j=\E_i^j$. Furthermore, Lemma \ref{LA: chain lin. ind lemma} shows that the $f_i^j$ are linearly independent.  This shows $(1)\Leftrightarrow (3)$. 

$(2)\Leftrightarrow (3)$ is trivial. 
\end{proof}
 The preceding results identify the existence of such an $F_j$ with the chains of $U$.  Notice that if $U$ has a Jordan subspace of dimension at least $n_j$, then $[\ker(U-\lambda_jI)] \cap [(U-\lambda_jI)^{n_j-1}(\h)]$ is indeed non-empty (and the desired $F_j$ exists).  We now turn our focus to the Jordan subspaces of $U$ and state this in the next corollary.   
 
 Let $\zeta_1,...,\zeta_l$ be the eigenvalues of $U$.  Since $U$ is similar to a Jordan form matrix, $U$ has a  Jordan decomposition,
$$\mathbb{C}^N=\tilde S_1\oplus ... \oplus \tilde S_l.$$  
As usual, the Jordan subspace $\tilde S_k=\langle \{\E_i^k\}_i\rangle $ with $1\leq i \leq \dim(\tilde S_k)$ and $\E_i^k$ a chain of $U$ with eigenvalue $\zeta_k$.  
Notice that each Jordan subspace of $U$ contains exactly one eigenvector of $U,$ and partially conversely, any eigenvector of $U$ can lie in at most one $\tilde S_k$.  
\begin{corollary}
\label{S: c1}
With notation as above, if there exists an $\tilde S_k$ with $\zeta_k=\lambda_j$ (that is, the eigenvalue of $S_k$ and the $J_j$ coincide) and $\dim(S_k)\geq n_j$, then there is an  $F_j=(f_i^j)^t$ satisfying $\C F_j=J_j^tF_j$ with each linearly independent components. 
\end{corollary}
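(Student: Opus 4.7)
The plan is to reduce the statement directly to Theorem \ref{S: three equiv. Theorem}, specifically its equivalent condition (3), by exhibiting a chain of $U$ of length $n_j$ with eigenvalue $\lambda_j$.

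First, I would unpack the hypothesis: $\tilde{S}_k$ is a Jordan subspace of $U$ with eigenvalue $\zeta_k = \lambda_j$, so by definition the compression of $U$ to $\tilde{S}_k$ is a single $\lambda_j$-Jordan block, and $\tilde{S}_k$ admits a chain basis $\varepsilon_1^k, \varepsilon_2^k, \ldots, \varepsilon_s^k$ (of length $s = \dim(\tilde{S}_k) \geq n_j$) satisfying $(U - \lambda_j I_N)\varepsilon_1^k = 0$ and $(U - \lambda_j I_N)\varepsilon_i^k = \varepsilon_{i-1}^k$ for $2 \le i \le s$. This chain basis is guaranteed by the definition of Jordan subspace in Section \ref{Sec: N} (after possibly reversing indices to match the convention for chains given immediately after equation (\ref{P: e1}) in Section \ref{Sec:SA}).

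Next I would truncate this chain. Since $s \geq n_j$, the initial segment $\varepsilon_1^k, \ldots, \varepsilon_{n_j}^k$ consists of non-zero vectors still satisfying the chain relations $(U - \lambda_j I_N)\varepsilon_1^k = 0$ and $(U - \lambda_j I_N)\varepsilon_i^k = \varepsilon_{i-1}^k$ for $2 \le i \le n_j$, so it is a chain of $U$ of length exactly $n_j$ with eigenvalue $\lambda_j$. This verifies condition (3) of Theorem \ref{S: three equiv. Theorem}.

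Finally, I invoke Theorem \ref{S: three equiv. Theorem}: since (3) holds, so does (1), producing an $F_j = (f_1^j, \ldots, f_{n_j}^j)^t$ with linearly independent components in $\h$ satisfying $F_j \circ \phi = J_j^t F_j$, which is precisely $\C F_j = J_j^t F_j$. There is essentially no obstacle here; the corollary is just a convenient repackaging of the theorem in terms of the Jordan decomposition of $U$. The only mildly subtle point is confirming that a truncation of a chain remains a chain of the shorter length, which is immediate from the definition.
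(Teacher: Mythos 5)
Your proof is correct and takes essentially the same route as the paper: the paper's own (very terse) proof observes that the eigenvector of $\tilde{S}_k$ lies in $[\ker(U-\lambda_j I)]\cap[(U-\lambda_j I)^{n_j-1}(\h)]$ and invokes condition (2) of Theorem \ref{S: three equiv. Theorem}, whereas you truncate the chain basis of $\tilde{S}_k$ to exhibit a length-$n_j$ chain directly and invoke condition (3). Since (2) and (3) are equivalent by the theorem itself, the two arguments are the same observation packaged slightly differently.
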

In fact, the converse holds as well, giving necessary and sufficient conditions for such an $F_j$. See Theorem \ref{S: promised converse}.
\begin{proof}
If there exists an $S_k$ with dimension at least $n_j$ and eigenvalue $\lambda_j$, we see it's eigenvector lives in   $[\ker(U-\lambda_j)] \cap [(U-\lambda_j)^{n_j-1}(\h)]$, which gives the result.  
\end{proof}
  We are led to the following question: 

Suppose we are given an $N\times N $ lower triangular matrix, $U$, with  $\phi'(0)^t$ the upper left $n\times n$ corner.  That is,
\newcommand{\BigFig}[1]{\parbox{12pt}{\huge #1}}
\newcommand{\BigY}{\BigFig{$Y$}} 
\newcommand{\BigI}{\BigFig{0}}
\newcommand{\BigZ}{\BigFig{$Z$}}
$$U=\begin{matrix} e_1 \\ \vdots \\e_n \\  e_{n+1} \\ \vdots\\ e_N  
\end{matrix}  \left[\begin{array}{c|c}
\begin{matrix}  
J_1 \\
	& \ddots\\
	&		& J_m   
\end{matrix} 		&  \begin{matrix}  
								& & \\ &  \BigI & \\
								& &  \\
								\end{matrix}\\
\hline
\begin{matrix} \\ \BigY \\ \\ \end{matrix} &
\begin{matrix} \\ \BigZ \\ \\ \end{matrix}
\end{array}\right].  $$
``Does each $J_j$ appear, possibly longer, in the Jordan form of $U$?"
 Equivalently, ``Does there exist an injective map identifying each $S_j$ (of a Jordan subspace decomposition for $\phi'(0)^t$) with an $\tilde S_k$ (of a Jordan subspace decomposition for $U$) that shares the same eigenvalue as  $S_j$ and has possibly larger dimension?" Or in terms of chains, ``For each $j$ does there exist a chain,  $\{\E_1^j, ... , \E_{n_j}^j\}$,  of $U$ with eigenvalue $\lambda_j,$ such that 
 $$\bigcup_{i,j}\{\E_i^j\}$$
 is a linearly independent set?"    An affirmative answer will ensure the existence of a solution, $F$ with linearly independent component functions.  What follows is a subsection of linear algebra results that  lead to this answer and are the basis of every major theorem of the paper.

\subsection{Linear Algebra Results}   
\label{Sub Sec: LA}
Certainly some or all of the results in this section are known in one form or another.  We develop and prove them here in a manner that is useful for our purposes. 
\begin{lemma}
\label{LA: chain lin. ind lemma}
Let $L$ be a linear operator on a vector space $V$.  Let $e_1^j, ... , e_{n_j}^j$  be a chain of $L$ with eigenvalue $\lambda_j$ for each $1\leq j \leq m$, and suppose the eigenvectors, $e_{n_1}^1, ... , e_{n_m}^m$ are linearly independent.  Then $\{e_i^j: 1\leq j \leq m,  1\leq i \leq n_j \}$ is a linearly independent set.  
\end{lemma}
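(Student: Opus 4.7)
The plan proceeds in two stages: first separate contributions coming from distinct eigenvalues, then handle a single eigenvalue by inducting on the maximum chain length.

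For the first stage, let $\mu_1, \dots, \mu_p$ be the distinct values appearing among $\lambda_1, \dots, \lambda_m$. A direct induction on the chain defining relations gives $(L - \lambda_j I)^{n_j} e_i^j = 0$ for every $i$, so given a purported relation $\sum_{j,i} c_i^j e_i^j = 0$, the partial sum $v_t := \sum_{j : \lambda_j = \mu_t} \sum_i c_i^j e_i^j$ lies in the generalized eigenspace $\ker(L - \mu_t I)^N$ for $N$ larger than every $n_j$. The standard linear-algebra fact that generalized eigenspaces for distinct eigenvalues of any linear operator are independent (provable by Bezout applied to the pairwise coprime polynomials $(x - \mu_s)^N$) then forces each $v_t = 0$. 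This reduces the problem to the case $\lambda_1 = \cdots = \lambda_m =: \lambda$, with the eigenvectors $e_{n_1}^1, \dots, e_{n_m}^m$ still linearly independent.

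For the second stage I would induct on $M := \max_j n_j$. The base case $M = 1$ is immediate, since every chain is just an eigenvector and the hypothesis \emph{is} the conclusion. For $M \geq 2$, assume $\sum_{j,i} c_i^j e_i^j = 0$. The formula $(L - \lambda I)^q e_i^j = e_{i+q}^j$ (with the convention $e_{k}^j := 0$ for $k > n_j$) follows by induction on $q$; applying $(L - \lambda I)^{M-1}$ to the relation therefore kills every term except those with $n_j = M$ and $i = 1$, each contributing $c_1^j e_{n_j}^j$. The hypothesized independence of the eigenvectors then yields $c_1^j = 0$ for every $j$ with $n_j = M$. Removing those now-zero terms from the original relation leaves, for each chain of length $M$, only its tail $e_2^j, \dots, e_M^j$, which is itself a chain of length $M - 1$ with the same eigenvalue and eigenvector $e_{n_j}^j$. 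Together with the unchanged shorter chains, this is a new system of chains of maximum length $M - 1$ sharing the same (still linearly independent) family of eigenvectors, so the inductive hypothesis forces all remaining coefficients to vanish.

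The main technical nuance is the bookkeeping when several chains of different lengths share an eigenvalue: applying $(L - \lambda I)^q$ mixes contributions from different chains in a way that depends on $n_j - q$, and the power must be chosen so as to isolate a pure eigenvector combination. The choice $q = M - 1$ picks out precisely the leading entries of the longest chains, and noticing that deleting those entries shortens the longest chains by one while preserving all eigenvectors is what allows the induction on $M$ to close.
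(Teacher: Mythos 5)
Your argument is correct, but it organizes the induction differently from the paper's. The paper runs a single induction on the total count $n = \sum n_j$: given a linear relation among all $e_i^j$, it applies $L - \lambda_1 I$, which drops $e_1^1$ and shifts the first chain while merely mixing the coefficients of the other chains; the inductive hypothesis applied to the resulting shorter system of chains (same eigenvectors) then forces $a_i^1 = 0$ for $i < n_1$, and repeating with $L - \lambda_2 I$, and so on, reduces the original relation to a relation among eigenvectors alone, where the hypothesis finishes. You instead split the proof in two stages. First you separate chains by distinct eigenvalues by invoking the independence of generalized eigenspaces (the Bezout/coprime-polynomial argument, valid on an arbitrary vector space). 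Second, within a fixed eigenvalue $\lambda$, you induct on the maximum chain length $M$, applying $(L - \lambda I)^{M-1}$ to collapse the relation to a pure eigenvector combination in one blow, extracting $c_1^j = 0$ for the longest chains, and then dropping to the tails to close the induction. Both routes work by using the chain structure to annihilate non-eigenvector entries and then appealing to the independence hypothesis; the paper's single count-based induction is leaner (one operator application per eigenvalue, no appeal to a separate generalized-eigenspace theorem), while your version is more modular and isolates the pure eigenvector relation all at once rather than one element per step.
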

\begin{proof}
We induct on $n=n_1+ ... +n_m=| \{e_i^j: 1\leq j \leq m,  1\leq i \leq n_j \}|.$  The result is clear for $n=1$.   Letting $n>1$, we write
\begin{equation}
\label{la: e1}
\sum_{j=1}^m \sum_{i=1}^{n_j} a_i^j e_i^j=0
\end{equation} 
for constants $a_i^j$, and show that $a_i^j$ are necessarily 0.  Applying $L-\lambda_1 I$ gives
$$\sum_{i=1}^{n_1-1}a_i^j e_{i+1}^j + \sum_{j=2}^m\sum_{i=1}^{n_j} b_i^j e_i^j=0$$
 for some constants $b_i^j$.  
 Since $e_2^1, ... , e_{n_1}^1$ is still a chain of $L$, our inductive hypothesis implies $a_i^1=0$ for any $1\leq i \leq n_1-1$.  Repeating this argument, we have $a_i^j=0$ for any $1\leq i \leq n_j-1$, and (\ref{la: e1})
 becomes 
 $$ \sum_{j=1}^ma_{n_j}^je_{n_j}^j=0.$$ 
 The result follows from the linear independence of our eigenvalues, $e_{n_1}^1, ... , e_{n_m}^m. $ 
\end{proof}

\begin{lemma}
\label{S:l2}
\label{LA: one J block lemma}
Let $J$ be a lower triangular $\lambda$-Jordan block, of length $k$. Set 
\newcommand{\BigJ}{\BigFig{$J$}} 
$$A=\left[  \begin{matrix} 
  & 0 \\
 \BigJ & \vdots \\ 
   & 0 \\
 a_1 \dots a_k & a_{k+1}\\
 \end{matrix} \right]. $$
If $B$ is the Jordan-form matrix conjugate to $A$ then either $B$ has two Jordan blocks, namely $J$ and the $1\times 1$ matrix, $[a_{k+1}],$ or $B$ is a $\lambda$-Jordan block of length $k+1$. More precisely, the later case happens if and only if $a_{k+1} = \lambda$ and $a_k \neq 0.$
\end{lemma}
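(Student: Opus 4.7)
The plan is to work directly with the action of $A - \lambda I$ on the standard basis $e_1, \ldots, e_{k+1}$ of $\mathbb{C}^{k+1}$ and to build explicit chains. Reading off the columns of $A$ gives $(A-\lambda I)e_j = e_{j+1} + a_j e_{k+1}$ for $1 \le j \le k-1$, $(A-\lambda I)e_k = a_k e_{k+1}$, and $(A-\lambda I)e_{k+1} = (a_{k+1}-\lambda)e_{k+1}$. Because $A$ is lower triangular, its eigenvalues are exactly $\lambda$ (with algebraic multiplicity $k$ inherited from $J$) and $a_{k+1}$; in particular the algebraic multiplicity of $\lambda$ is $k+1$ precisely when $a_{k+1}=\lambda$. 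I will split into three cases, and in each one produce $k+1$ linearly independent vectors forming chains whose Jordan type reads off $B$.

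In the first case, $a_{k+1}=\lambda$ and $a_k\neq 0$, set $v_j := (A-\lambda I)^j e_1$. A routine induction using the formulas above yields $v_j = e_{j+1} + a_j e_{k+1}$ for $0 \le j \le k-1$ (with $a_0 := 0$) and $v_k = a_k e_{k+1}\neq 0$; the matrix whose columns are $v_0,\ldots,v_k$ is lower triangular with diagonal $(1,\ldots,1,a_k)$, hence invertible, so these form a single chain of length $k+1$ and $B$ is a $\lambda$-Jordan block of length $k+1$. In the second case, $a_{k+1}\neq\lambda$, the vector $e_{k+1}$ is an eigenvector for the distinct eigenvalue $a_{k+1}$, and I look for a length-$k$ chain for $\lambda$ of the form $f_j := e_j + b_j e_{k+1}$. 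The relations $(A-\lambda I)f_j = f_{j+1}$ for $j<k$ and $(A-\lambda I)f_k = 0$ translate to the linear recurrence $b_{j+1} = a_j + b_j(a_{k+1}-\lambda)$ with terminal condition $a_k + b_k(a_{k+1}-\lambda) = 0$, uniquely solvable because $a_{k+1}-\lambda\neq 0$. The set $\{f_1,\ldots,f_k,e_{k+1}\}$ is then a basis (its change-of-basis matrix differs from the identity only in the last row), and in it $A$ has the block form $\mathrm{diag}(J,[a_{k+1}])$. In the remaining case, $a_{k+1}=\lambda$ and $a_k = 0$, the vectors $f_j := (A-\lambda I)^{j-1}e_1 = e_j + a_{j-1}e_{k+1}$ now form a chain of length exactly $k$ (it terminates because $(A-\lambda I)f_k = a_k e_{k+1} = 0$ while $f_k\neq 0$); together with the additional $\lambda$-eigenvector $e_{k+1}$ they give a basis on which $A$ acts as $\mathrm{diag}(J,[\lambda])$.

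The main obstacle I expect is the last case: once $a_{k+1}=\lambda$, a dimension count for $\ker(A-\lambda I)$ alone shows only that the Jordan blocks for $\lambda$ have sizes summing to $k+1$, and distinguishing the partition $(k,1)$ from the single block $(k+1)$ requires exhibiting the explicit chains. The condition $a_k = 0$ is precisely what truncates the chain starting at $e_1$ at length $k$, while $a_k\neq 0$ allows it to extend to length $k+1$; pinning down this dichotomy is the heart of the lemma, and once it is in hand the ``more precisely'' clause is immediate from comparing the three cases.
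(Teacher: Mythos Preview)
Your proof is correct and follows essentially the same route as the paper: both split into the three cases $a_{k+1}\neq\lambda$, $a_{k+1}=\lambda$ with $a_k=0$, and $a_{k+1}=\lambda$ with $a_k\neq 0$, and in each case build an explicit chain basis of the form $e_j+c_je_{k+1}$. Your backward recursion for the $b_j$ in the $a_{k+1}\neq\lambda$ case is exactly the paper's formula $c_k=a_k/(\lambda-a_{k+1})$, $c_j=(c_{j+1}-a_j)/(a_{k+1}-\lambda)$, and your vectors $f_j=(A-\lambda I)^{j-1}e_1=e_j+a_{j-1}e_{k+1}$ in the $a_{k+1}=\lambda$ cases coincide with the paper's $\epsilon_j$; the only cosmetic difference is that you generate the chain by iterating $A-\lambda I$ on $e_1$ rather than writing down the basis vectors directly.
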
  
\begin{proof}
Let $\{e_1, ... , e_{k+1}  \}$ be the usual basis on $\mathbb{C}^{k+1}$.  We will exhibit a new basis, $\{\E_1, ..., \E_{k+1}\}$ giving the desired result. 

Suppose first that $a_{k+1}\neq \lambda. $  Set $\E_{k+1}=e_{k+1}$, and $\E_j=e_j+c_je_{k+1}$ with $c_k=a_k/(\lambda-a_{k+1})$ and $c_j=(c_{j+1}-a_j)/(a_{k+1}-\lambda)$. It follows that $\E_k$ and $\E_{k+1}$ are eigenvectors with eigenvalues $\lambda$ and $a_{k+1}$, respectively, and  $A(\E_j)=\lambda \E_j+\E_{j+1}$ for each $1\leq j \leq (k-1)$. So $B= $ diag$(J,a_{k+1}).$
     
Now let $a_{k+1}=\lambda$.  Set $\E_{k+1}=e_{k+1}$,  $\E_1=e_1$, and $\E_j=e_j+a_{j-1}e_{k+1}$ for each $2\leq j \leq k$. 
It follows that $A(\E_j)=\lambda \E_j+\E_{j+1}$ for each $1\leq j \leq (k-1)$. 
If $a_k=0$, we see $\E_k$ and $\E_{k+1}$ are linearly independent  eigenvectors, and
again we have $B=$ diag$(J,a_{k+1})=$ diag$(J,\lambda).$
 On the other hand, if $a_k\neq 0, A(\E_k)=\E_{k+1}$ and $\E_{k+1} $ is the lone eigenvector. It follows that $B$ is a $(k+1) \times (k+1)$ $\lambda$-Jordan block. 
\end{proof}
\begin{lemma}
\label{S:l3}
\label{LA: 2 J blocks lemma}
\newcommand{\0}{\BigFig{$0$}} 
\newcommand{\I}{\BigFig{$J_1$}}
\newcommand{\J}{\BigFig{$J_2$}}
Let $$A= \begin{matrix} e_1 \\ \vdots \\ e_n \\ e_{n+1} \\ \vdots \\ e_{n+k}\\ e_{n+k+1}
\end{matrix}
 \left[ \begin{array}{c|c|r}

 \I			& \0 		&	\begin{matrix}
							0\\ 
						\vdots\\
							0							 
							\end{matrix}\\
\hline
 \0 		& \J		&	\begin{matrix}
							0\\
							\vdots\\
							0\\
							\end{matrix}\\
\hline
\begin{matrix}
0 & \dots & 0 &1						
\end{matrix} 									& \begin{matrix}
													0 & \dots & 0&1 
													\end{matrix}								& \lambda 
 \end{array} \right] $$
 so that the last row of $A$ has a 1 in columns $n$ and $n+k$, $\lambda$ in column $n+k+1,$ and all other entries 0.  $J_1$ and $J_2$ are both lower triangular $\lambda$-Jordan blocks of length $n$ and $k$, respectively with $k\geq n$.  Then the Jordan-form matrix conjugate to $A$ consists of two $\lambda$-Jordan blocks of length $n$ and $k+1$ respectively.
\end{lemma}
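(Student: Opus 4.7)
My plan is to work directly in the standard basis, read off the action of $A - \lambda I$, exhibit two explicit chains whose union is a basis, and then invoke Lemma \ref{LA: chain lin. ind lemma} to conclude.

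First I would record how $A - \lambda I$ moves the standard basis vectors. Reading off the columns, one sees that $(A-\lambda I)e_j = e_{j+1}$ for $1 \leq j \leq n-1$, $(A-\lambda I)e_n = e_{n+k+1}$, $(A-\lambda I)e_j = e_{j+1}$ for $n+1 \leq j \leq n+k-1$, $(A-\lambda I)e_{n+k} = e_{n+k+1}$, and $(A-\lambda I)e_{n+k+1} = 0$. So two chains jump out: $e_1 \mapsto \cdots \mapsto e_n \mapsto e_{n+k+1} \mapsto 0$ (length $n+1$) and $e_{n+1} \mapsto \cdots \mapsto e_{n+k} \mapsto e_{n+k+1} \mapsto 0$ (length $k+1$). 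The trouble, and the only real content of the lemma, is that these two chains terminate at the same eigenvector $e_{n+k+1}$, so I cannot use both at once. The kernel computation above shows $\ker(A-\lambda I) = \operatorname{span}\{e_{n+k+1},\, e_n - e_{n+k}\}$, which is two-dimensional, so the Jordan form must have exactly two blocks whose lengths sum to $n+k+1$.

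I would keep the chain $e_{n+1}, e_{n+2}, \ldots, e_{n+k}, e_{n+k+1}$ as the length-$(k+1)$ chain with eigenvector $e_{n+k+1}$. For the second chain, I would use the other eigenvector $e_n - e_{n+k}$ and construct its predecessors by hand: set $g_i := e_i - e_{k+i}$ for $1 \leq i \leq n$ (this uses $k \geq n$ to ensure $k+i \leq n+k$ stays in range and avoids the last row). A direct check from the action above gives $(A-\lambda I)g_i = e_{i+1} - e_{k+i+1} = g_{i+1}$ for $1 \leq i \leq n-1$ and $(A-\lambda I)g_n = e_{n+k+1} - e_{n+k+1} = 0$, so $g_1, g_2, \ldots, g_n$ is an honest $\lambda$-chain of length $n$.

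Finally, the two eigenvectors $e_{n+k+1}$ and $g_n = e_n - e_{n+k}$ are visibly linearly independent (their nonzero coordinates are disjoint). Lemma \ref{LA: chain lin. ind lemma} then guarantees that the $n+k+1$ vectors $\{g_1,\ldots,g_n\} \cup \{e_{n+1},\ldots,e_{n+k+1}\}$ are linearly independent in $\mathbb{C}^{n+k+1}$, hence a basis. In this basis, $A$ is block-diagonal with a lower-triangular $\lambda$-Jordan block of length $n$ (from the $g_i$) and one of length $k+1$ (from the $e_{n+j}$), which is exactly the claimed Jordan form. The main obstacle is guessing the chain $g_i = e_i - e_{k+i}$: once one inspects the two ``natural'' chains and notices they collide at $e_{n+k+1}$, the correct move is to subtract them off against each other, and the hypothesis $k \geq n$ is precisely what keeps this construction inside the matrix.
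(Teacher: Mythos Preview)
Your proof is correct and essentially identical to the paper's: both construct the length-$n$ chain as $g_i = e_i - e_{k+i}$ and keep $e_{n+1},\ldots,e_{n+k+1}$ as the length-$(k+1)$ chain. You add more motivation (the kernel computation, the collision at $e_{n+k+1}$, and the appeal to Lemma~\ref{LA: chain lin. ind lemma}), whereas the paper simply writes down the basis and asserts linear independence, but the argument is the same.
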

\begin{proof}
Set $\E_1=e_1-e_{k+1}, \E_2=e_2-e_{k+2}, ... , \E_n=e_n-e_{n+k}$, and let $\E_{n+j}=e_{n+j}$ for $1\leq j \leq (k+1)$.  Notice that $\{\E_1, ... , \E_{n+k+1}\}$ is linearly independent.  Expressing $A$ in this basis gives the result with $\langle \E_1, ... , \E_n \rangle$ as the Jordan subspace with dimension $n$, and 
$\langle \E_{n+1}, ... ,  \E_{n+k+1}\rangle$ the Jordan subspace with dimension $k+1$. 
\end{proof}

\begin{theorem}
\label{S: t2}
\label{LA: J block theorem}
 Let $C=$diag$(J_1, ... , J_m)$
be an $n\times n$ matrix with each $J_j$ a lower triangular Jordan block with eigenvalue $\lambda_j$.  
Let $$A=  \left[ \begin{matrix}
	C &	0\\
 	D &	E
 \end{matrix} \right]$$ 
an $N\times N$ lower triangular matrix and let $B$ be a Jordan form matrix similar to $A$.  Then for every Jordan block, $J_j$, of $C$, there is a corresponding Jordan block, $\tilde J_j$ of $B$  sharing the eigenvalue $\lambda_j$, and such that length$(J_j) \leq $ length$(\tilde J_j)$.  
\end{theorem}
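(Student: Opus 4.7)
I will induct on $N - n$. The base case $N = n$ is immediate, since then $A = C$ is already in Jordan form. For the inductive step, let $A'$ denote the upper-left $(N-1) \times (N-1)$ submatrix of $A$; it is still lower triangular with upper-left $n \times n$ equal to $C$, so the inductive hypothesis produces an invertible $P$ with $B' := P A' P^{-1}$ in Jordan form together with an injection from the $J_j$ of $C$ to the Jordan blocks of $B'$ that matches eigenvalues and (weakly) increases lengths. Conjugating $A$ by $\mathrm{diag}(P,1)$ preserves lower triangularity, so I may assume
\[
A = \begin{pmatrix} B' & 0 \\ d^T & a \end{pmatrix}.
\]

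The remaining task is a \emph{monotonicity claim}: for every eigenvalue $\lambda$ and every $k \geq 1$, the number of $\lambda$-Jordan blocks of $A$ of length $\geq k$ is at least the corresponding count for $B'$. Componentwise this means the $\lambda$-block length sequence of $A$ dominates that of $B'$, and so the injection from the inductive hypothesis can be composed with one sending each block of $B'$ to a no-shorter block of $A$ with the same eigenvalue, finishing the induction.

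To verify the monotonicity claim I would compute $(A - \lambda I)^k$ in block form; an easy induction yields
\[
(A - \lambda I)^k = \begin{pmatrix} (B' - \lambda I)^k & 0 \\ x_k^T & (a - \lambda)^k \end{pmatrix}, \qquad x_k^T = d^T \sum_{j=0}^{k-1} (a-\lambda)^j (B' - \lambda I)^{k-1-j}.
\]
When $\lambda \neq a$ the scalar $(a-\lambda)^k$ is invertible, so the last coordinate of any kernel vector is determined by the first $N-1$, giving $\dim \ker(A - \lambda I)^k = \dim \ker(B' - \lambda I)^k$; blocks with eigenvalue $\neq a$ are therefore preserved exactly. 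When $\lambda = a$ one finds
\[
\dim \ker(A - aI)^k = \dim V_k + 1 - \alpha_k, \qquad V_k := \ker(B' - aI)^k,
\]
with $\alpha_k \in \{0,1\}$ indicating whether the functional $v \mapsto d^T (B' - aI)^{k-1} v$ is nonzero on $V_k$. Setting $\alpha_0 := 1$, successive differences give $\mu_k(A, a) = \mu_k(B', a) + \alpha_{k-1} - \alpha_k$, where $\mu_k(\cdot, a)$ counts $a$-blocks of length $\geq k$.

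The monotonicity claim then reduces entirely to showing that $(\alpha_k)$ is nonincreasing, and this is the only substantive step. It follows from a single observation: if $v \in V_{k+1}$ then $(B' - aI) v \in V_k$, so vanishing of $d^T (B' - aI)^{k-1}$ on $V_k$ forces $d^T (B' - aI)^k$ to vanish on $V_{k+1}$. The main obstacle is really recognizing that the right invariants to track are these counts $\mu_k$ through the pair $(V_k, \alpha_k)$; the alternative approach of simplifying $d^T$ block by block and invoking Lemma \ref{LA: one J block lemma} and Lemma \ref{LA: 2 J blocks lemma} directly is viable but requires additional bookkeeping when several Jordan blocks of $B'$ share the eigenvalue $a$.
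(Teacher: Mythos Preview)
Your argument is correct and takes a genuinely different route from the paper's. Both proofs induct on $N-n$ and reduce to adjoining a single row, but they diverge at that point. The paper keeps $C$ in the corner, normalizes the new row block by block via Lemma~\ref{LA: one J block lemma} (zeroing the portion under each $J_j$ with $\lambda_j\neq a$, and reducing the rest to $(0,\dots,0,1)$), and then iterates Lemma~\ref{LA: 2 J blocks lemma} to peel off the shorter $\lambda$-blocks one at a time, showing that exactly the longest one gains a unit of length. You instead put the whole $(N-1)\times(N-1)$ corner into Jordan form first and read the Jordan invariants of $A$ directly from the kernel dimensions $\dim\ker(A-\lambda I)^k$ through the indicator sequence $\alpha_k$; the monotonicity $\alpha_k\geq\alpha_{k+1}$ is the whole content, and your one-line verification of it is correct. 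Your route is cleaner and avoids the block-by-block bookkeeping you rightly flag as the delicate part of the alternative. The paper's route, however, is explicitly constructive: it produces the change of basis and records that each original chain vector $e_i^j$ is replaced by $e_i^j + x$ with $Q_j x = 0$. That finer information is not needed for Theorem~\ref{LA: J block theorem} itself, but it is precisely what drives Theorem~\ref{LA: workhorse theorem}, whose proof revisits this induction step by step; so the paper's argument is doing double duty, whereas yours proves the present statement but would need to be supplemented to recover those later conclusions.
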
 
\begin{proof}
Inductively, it suffices to show the result for the case $D$ is an $N\times1$ row vector and $E=(\lambda)$ is $1\times 1$, i.e. 
\newcommand{\largec}{\BigFig{$C$}}
$$A= \left[ \begin{array}{ccc|r}
 	&   &  	  & 0 \\
    & \largec &     & \vdots\\
	&   & 	  & 0 \\
\hline
a_1 		& \dots  & a_N	  & \lambda
 \end{array} \right].$$

 As above let $J_j$ be $n_j\times n_j$ with eigenvalue $\lambda_j$, and $\{e_1, ... , e_N\}$ the usual basis on $\mathbb{C}^N$.   
 
 Focus first on $J_1$.  
 In the case $\lambda_1 \neq \lambda$ or the case $\lambda_1 = \lambda$  and $a_{n_1}=0$, the proof of Lemma \ref{S:l2} allows us to assume $a_1= ... =a_{n_1}=0$   by replacing $e_j$ with $\E_j=e_j+c_je_{N+1}$ for   $1\leq j \leq n_1$ with appropriate values of $c_j$.    If, on the other hand, we have $\lambda_1=\lambda$ and $a_{n_1} \neq 0$, a similar application of Lemma \ref{S:l2} gives the simplification $a_1=...=a_{n_1-1}=0,$ and $a_{n_1}=1$ with no loss of generality. 

Consequently, it suffices to show the result under the assumption that each $J_j$ is a $\lambda$-Jordan block, and furthermore that 
$$a_i=\left\{ \begin{array}{cc}
0 &\mbox{if the $i^{th}$ column is not the last column of some $J_j$}\\
1 &\mbox{if the $i^{th}$ column is the last column of some $J_j$.}
 \end{array}\right.$$
Explicitly,
\newcommand{\0}{\BigFig{$0$}}
\newcommand{\I}{\BigFig{$J_1$}} 
\newcommand{\J}{\BigFig{$J_m$}}
$$A= \left[ \begin{array}{c|c|c|r}

 \I			&\dots			& \0 		&	\begin{matrix}
							0\\
						\vdots\\
							0					\end{matrix}\\
\hline
\vdots		& \ddots	& \vdots &\vdots\\
\hline
 
 \0 		& \dots		& \J		&	\begin{matrix}
										0\\
										\vdots\\
										0 
										\end{matrix}\\
\hline
\begin{matrix}
0			 & \dots 	& 0		 &1 						
\end{matrix} 									& \begin{matrix}
													\dots 
													\end{matrix}					&   \begin{matrix}
																						0 & \dots & 0 &1
																						\end{matrix}			& \lambda 
 \end{array} \right]$$ 
By using a unitary change of basis, arrange the Jordan blocks so the $n_i=$ length$(J_i) \leq n_j=$ length$(J_j)$ for $1\leq i\leq j\leq m$.  Inductive application of \ref{S:l3} will give the appropriate change of basis to attain the Jordan form matrix, $B$, and moreover, each $J_i$ will appear unaffected in $B$ except for $J_m$ which will now appear longer.  
\end{proof}
We are now able to answer our question.  When $A=U$ and $C=\phi'(0)^t=$ diag$(J_1, ... , J_m)$, the above result says that there is a basis so that expressing $U$ in this basis gives the Jordan matrix, $U=$ diag$(\tilde J_1, ... , \tilde J_l)$
 with $l\geq m$.  Moreover, for every $1\leq j\leq m$, length$(J_j)\leq$ length$(\tilde J_j)$  and the eigenvalue of $\tilde J_j$ is $\lambda_j$.   
\begin{corollary}
\label{S: c2}
\label{LA: decomp corollary}
Let 
$$\Cn=\oplus_{j=1}^m S_j \mbox{ and } \mathbb{C}^N=\oplus_{j=1}^l\tilde S_j$$ 
be Jordan subspace decompositions of $\p$ and $U$, respectively.  Then there is an injection,  
$$\tau : \{S_j\}_{j=1}^m \to \{\tilde S_j\}_{j=1}^l, $$
 such that the eigenvalue of $S_j$ is preserved, and $\dim(S_j)\leq \dim(\tau S_j).$
\end{corollary}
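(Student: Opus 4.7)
The plan is to apply Theorem~\ref{LA: J block theorem} directly and translate its conclusion from the language of matrix Jordan blocks into the language of Jordan subspaces. Relative to a chain basis of $\Cn$ adapted to the given decomposition $\Cn = \oplus_{j=1}^m S_j$, the operator $\p$ is expressed as $C = \mathrm{diag}(J_1,\ldots,J_m)$, with each $J_j$ a lower-triangular $\lambda_j$-Jordan block of length $\dim(S_j)$. Since $\p$ is the upper-left $n\times n$ corner of $U$, I would extend this basis of $\Cn$ to a basis of $\CN$ so that $U$ takes the lower-triangular form required by the hypothesis of Theorem~\ref{LA: J block theorem}.

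Applying that theorem produces a Jordan form $B$ similar to $U$ together with an assignment $J_j \mapsto \tilde J_j$, where $\tilde J_j$ is a Jordan block of $B$ with eigenvalue $\lambda_j$ and $\mathrm{length}(\tilde J_j) \geq \mathrm{length}(J_j) = \dim(S_j)$. The assignment is injective because the theorem's proof reserves a distinct block of $B$ for each $J_j$. The Jordan blocks of $B$ are in natural bijection with the pieces of any Jordan subspace decomposition of $\CN$ coming from $B$, with the bijection preserving both eigenvalue and dimension.

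To finish, I would invoke the fact that the multiset of (eigenvalue, length) pairs of Jordan blocks is a complete similarity invariant of an operator, so any two Jordan subspace decompositions of $\CN$ yield the same such multiset. Thus the given decomposition $\CN = \oplus_{j=1}^l \tilde S_j$ may be re-indexed to match the one arising from $B$. Defining $\tau(S_j)$ to be the Jordan subspace $\tilde S_j$ corresponding (under this re-indexing) to $\tilde J_j$ gives an injection with the required eigenvalue-preservation and dimension inequality.

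The main potential subtlety is the non-uniqueness of Jordan subspaces when an eigenvalue is repeated; but since $\tau$ is really specified on the level of (eigenvalue, length) data and only subsequently lifted to a particular choice of decomposition, this causes no genuine obstacle. The substantive content was already carried by Theorem~\ref{LA: J block theorem}, and the corollary is essentially a restatement in the subspace language needed later in the paper.
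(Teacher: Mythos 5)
Your proposal is correct and follows the same route the paper takes: the paper gives no separate proof of this corollary, simply applying Theorem~\ref{LA: J block theorem} with $A=U$ and $C=\p$ in the paragraph directly preceding it, and your argument makes that application explicit and supplies the (correct) bookkeeping step that the multiset of $(\text{eigenvalue},\text{length})$ pairs is a similarity invariant, which lets you re-index an arbitrary Jordan subspace decomposition of $\CN$ to match the one produced by the theorem.
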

The following theorem pays close attention to the proof of Theorem \ref{LA: J block theorem} and will be useful in distinguishing whether a solution with full rank near 0 exists or not. The notation introduced in the theorem below matches that developed earlier in the section.   
\begin{theorem}
\label{LA: workhorse theorem}
Let $\Cn=\oplus_{j=1}^m S_j$ be a Jordan subspace decompositions induced by $\p$, and  $S_j=\langle e_i^j: 1\leq i \leq n_j\rangle$ the usual chain basis.  Consider $\Cn$ as the subspace of $\CN$ spanned by the first $n$ components, and let $Q_j$ be the projection to $S_j$ along $(\oplus_{i\neq j}S_i)\oplus (\Cn)^{\perp}$. Then there exists a Jordan subspace decomposition induced by $U$, namely, $\mathbb{C}^N=\oplus_{j=1}^l\tilde S_j$ 
with $d_j=\dim(\tilde S_j)$ and chain bases $\langle \E_i^j: 1\leq i \leq d_j\rangle =\tilde S_j$ such that the following hold.  
\begin{enumerate}
\item $Q_j(\E_i^j)=e_i^j$ for any $1\leq j \leq m$, and $1\leq i \leq n_j$.  
\item The map $\tau(S_j)=\tilde S_j$ satisfies the result of Corollary \ref{S: c2}. 
\item $d_j>n_j$ for some $1\leq j \leq m$ if and only if $Q(\ker(U-\lambda_j I_N))\subsetneq \ker(\p-\lambda_j I_n)$.  
\end{enumerate}
\end{theorem}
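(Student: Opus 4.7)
The plan is to follow the inductive argument of Theorem \ref{LA: J block theorem}, explicitly tracking the basis transformations produced by Lemmas \ref{S:l2} and \ref{S:l3}, and to verify that the resulting chain bases satisfy each of (1), (2), and (3). Property (2) is essentially the conclusion of Corollary \ref{S: c2} applied to the bases delivered by the construction, so the substantive work lies in (1) and (3).

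For (1), the key is the invariant that at every stage of the inductive reduction, each chain vector $\E_i^j$ with $1 \le i \le n_j$ has the form $\E_i^j = e_i^j + v_i^j$ where $v_i^j \in \bigl(\bigoplus_{k \ne j} S_k\bigr) \oplus (\Cn)^{\perp}$. This invariant can be read off directly from the explicit basis changes in the supporting lemmas: in Lemma \ref{S:l2}, each perturbation $\E_j - e_j$ is a multiple of the newly-adjoined basis vector $e_{N+1} \in (\Cn)^{\perp}$; in Lemma \ref{S:l3}, the shorter block's chain is perturbed by $-e_{k+r}$, a chain vector belonging to a different $S_k$, while chain vectors of the longer block are either fixed or replaced by the new external coordinate $e_{n+k+1} \in (\Cn)^{\perp}$. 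Since $Q_j$ annihilates $(\bigoplus_{k \ne j} S_k) \oplus (\Cn)^{\perp}$, the invariant immediately yields $Q_j(\E_i^j) = e_i^j$ and thus (1).

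For (3), I would exploit the explicit form of the eigenvectors emerging from the construction: whenever a block is strictly extended (i.e.\ $d_k > n_k$), the final chain vector $\E_{d_k}^k$ is the newly-adjoined coordinate, which lies in $(\Cn)^{\perp}$ and hence projects to zero under $Q$. A dimension count then shows that each application of Lemma \ref{S:l3} to a pair of $\lambda$-blocks reduces $\dim Q(\ker(U - \lambda I_N))$ by exactly one, since two old contributing eigenvectors $e_{n_s}^s$ and $e_{n_t}^t$ are replaced by their single combination $e_{n_s}^s - e_{n_t}^t$ (together with a zero contribution from the extended block). Therefore $\dim Q(\ker(U - \lambda_j I_N))$ equals $\dim \ker(\phi'(0)^t - \lambda_j I_n)$ minus the number of extended blocks with eigenvalue $\lambda_j$, and the equivalence in (3) follows in both directions.

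The principal obstacle I anticipate is the combinatorial bookkeeping: the induction of Theorem \ref{LA: J block theorem} adjoins one row/column at a time, and at each step one must confirm that the new basis change introduces perturbations only in $(\bigoplus_{k \ne j} S_k) \oplus (\Cn)^{\perp}$ and does not pollute the $S_j$-component of any $\E_i^j$. Once this invariant is formalized and shown to be stable under each step, the statements (1), (2), and (3) all fall out of the explicit formulas in Lemmas \ref{S:l2} and \ref{S:l3}.
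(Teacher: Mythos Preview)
Your proposal is correct and follows essentially the same route as the paper's proof: both arguments track the explicit basis changes produced inductively by Lemmas \ref{S:l2} and \ref{S:l3} in the proof of Theorem \ref{LA: J block theorem}, verify that each replacement of $e_i^j$ has the form $e_i^j + x$ with $Q_j x = 0$ (your invariant), and then handle (3) by a step-by-step dimension count of $Q(\ker(U-\lambda I))$ as chains get extended. Your phrasing around (3) is slightly loose (within a single inductive step several pairings may occur but at most \emph{one} chain is actually lengthened, so the dimension drops by one per step, not per pairing), but this is exactly how the paper argues it and does not affect correctness.
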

\begin{proof}
The basis $\{ \E_i^j\}$ of $\mathbb{C}^N$ which will give our solution is the basis that is produced in the proof of Theorem \ref{S: t2}.  To prove this, we will prove our results hold at each step of the induction proof of Theorem \ref{S: t2}.
With no loss of generality, we will show it only for the first step.  Assume we have
$$U_{n+1}= \left[ \begin{array}{ccc|r}
 	&   &  	  & 0 \\
    & \p &     & \vdots\\
	&   & 	  & 0 \\
\hline
d_1 		& \dots  & d_n	  & \lambda
 \end{array} \right],$$ 
acting on $\Cn\oplus\mathbb{C}$, and denote our basis elements for $\Cn$ $\{e_i^j: 1\leq j \leq m, 1\leq i \leq n_j\}$ (in chains as above) and $e_{n+1}$ the extra basis vector.  

(1) In order to put $U_{n+1}$ into Jordan form, the basis elements are replaced via applications of Lemmas \ref{S:l2} and \ref{S:l3}.  In both of these Lemmas' proofs we see a basis vector $e_i^j$ is always replaced with $e_i^j+x$ where $x$ is some vector satisfying $Q_jx=0$.  This proves (1).

(2) Notice that a chain of basis elements, say $e_1^j, ... ,e_{n_j}^j$ always is replaced by a new chain with the same eigenvalue and of possibly greater length.  This proves (2).

(3) When putting $U_{n+1}$ into  Jordan form, at most one chain can gain length.  Suppose first that some chain, say $e_1^j, ..., e_{n_j}^j$, gains length.  So $j$ is fixed for the moment.  It follows from the proofs of Lemmas \ref{S:l2} and \ref{S:l3} that the new chain has eigenvector $e_{n+1}$, and any other chain's original eigenvector, say $e_{n_k}^k$, is replaced with $e_{n_k}^k+c_ke_{n+1}+b_ke_{n_j}^j$ where $b_k$ is either 1 or 0, and $c_k$ some constant.  It follows that $\dim(Q[\ker(U_{n+1}-\lambda_j I_{n+1}) ])=\dim[ \ker( \p-\lambda_j I_n)] - 1$ and so 
$Q[\ker(U_{n+1}-\lambda_j I_{n+1})]\subsetneq \ker(\p-\lambda_j I_n).$  

Lastly, suppose now that no chain gains length.  Lemma \ref{S:l3} is not used, and from \ref{S:l2} it follows that each original eigenvector, $e_{n_j}^j$ is replaced with $e_{n_j}^j+ c_{j, n_j}e_{n+1}$.  
Consequently, $(Q\ker(U_{n+1}-\lambda_j I_{n+1}))= \ker(\p-\lambda_j I_n).$ 
\end{proof}

The following generalization of \ref{LA: J block theorem}
 is now easily within our grasp.
\begin{theorem}
\label{C: t1}
\label{LA: extra theorem}
Let $$A=\left[ \begin{matrix}
C & 0\\
D & E
\end{matrix}\right] \mbox{ or } \left[ \begin{matrix}
C & D\\
0 & E
\end{matrix}\right], $$
 an $N\times N$ matrix, with $C$ the upper left $n\times n$ corner of $A$ and $n\leq N$.  Let 
 $$\Cn=\oplus_{j=1}^m S_j \mbox{ and } \mathbb{C}^N=\oplus_{j=1}^l\tilde S_j$$ 
be Jordan subspace decompositions of $C$ and $A,$ respectively. Then there is an injection, 
$$\tau : \{S_j\}_{j=1}^m \to \{\tilde S_j\}_{j=1}^l, $$
 such that the eigenvalue of $S_j$ is preserved, and $\dim(S_j)\leq \dim(\tau S_j).$
\end{theorem}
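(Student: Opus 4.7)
The plan is to reduce this theorem to the already-established Theorem \ref{LA: J block theorem} via similarity transformations. Both the Jordan subspace data (eigenvalues and block dimensions) and the existence of an injection $\tau$ with the stated properties depend only on similarity classes: if $M' = P M P^{-1}$, then $P$ pushes any Jordan subspace decomposition of $M$ forward to one of $M'$ with identical eigenvalues and block dimensions. Consequently I am free to conjugate $A$ by any block-diagonal matrix of the form $\mathrm{diag}(P_1, P_2)$; such a conjugation preserves the block triangular structure of $A$ (either kind), acts on the upper-left block by $C \mapsto P_1 C P_1^{-1}$, and does not affect the question of whether the desired $\tau$ exists.

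First I would handle the block lower triangular case $A = \left[\begin{matrix} C & 0 \\ D & E \end{matrix}\right]$. Choose $P_1$ so that $P_1 C P_1^{-1}$ is a Jordan matrix with all Jordan blocks lower triangular, reversing chains within blocks as in the earlier discussion if necessary. Choose $P_2$ so that $P_2 E P_2^{-1}$ is lower triangular, for instance by putting $E$ itself into lower triangular Jordan form. After conjugating by $\mathrm{diag}(P_1, P_2)$, the new $A$ is genuinely lower triangular and its upper-left $n \times n$ corner is in lower triangular Jordan form. This is exactly the hypothesis of Theorem \ref{LA: J block theorem}, which immediately supplies the required injection.

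For the block upper triangular case $A = \left[\begin{matrix} C & D \\ 0 & E \end{matrix}\right]$ I would pass to the transpose. Then $A^t$ is block lower triangular with $C^t$ as its upper-left $n \times n$ corner. Every square matrix is similar to its transpose, and any such similarity transports Jordan subspace decompositions to Jordan subspace decompositions with identical eigenvalues and block dimensions. Therefore the block lower triangular case just proved, applied to $A^t$, yields an injection between Jordan decompositions of $C^t$ and $A^t$, which transports back to the required $\tau$ for the original $A$.

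The hard part is bookkeeping rather than substance: one must verify that conjugation and the transpose correspondence really do carry Jordan subspace decompositions, in the sense of the paper's definition, to Jordan subspace decompositions of the new operator with matching eigenvalues and dimensions. But this is immediate, since each is a vector space isomorphism intertwining the relevant operator actions and so preserves both algebraic complements and the Jordan block structure. With this observation in hand the entire theorem reduces cleanly to Theorem \ref{LA: J block theorem}.
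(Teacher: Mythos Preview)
Your proposal is correct and follows essentially the same route as the paper: both conjugate by a block-diagonal matrix to put $C$ into lower triangular Jordan form and $E$ into lower triangular form, then invoke Theorem \ref{LA: J block theorem} (the paper cites its Corollary \ref{S: c2}, which is the same content). The only difference is that for the upper triangular case the paper simply says the argument is ``similar,'' whereas you reduce it to the first case via the transpose; both are fine.
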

\begin{proof}
We will show the result for $$A=\left[ \begin{matrix}
C & 0\\
D & E
\end{matrix}\right],$$  
as a proof for the case
$$\left[ \begin{matrix}
C & D\\
0 & E
\end{matrix}\right]$$
is similar.  
Let $M$ be an invertible matrix such that $MCM^{-1}$ is in Jordan form (lower triangular), and let $L$ be an invertible matrix such that $LEL^{-1}$ is lower triangular.  Set 
$$\tilde M =\left[\begin{matrix}
M&0\\ 0&L\end{matrix}\right],$$
so $$
\tilde MA\tilde M^{-1}=\left[ \begin{matrix}
MCM^{-1} &0\\
\tilde D & LEL^{-1}
\end{matrix}\right]$$ 
is a lower triangular matrix with upper left $n\times n$ corner in Jordan form.  It follows that $\{MS_j \}_{j=1}^m$ and $\{\tilde M \tilde  S_j\}_{j=1}^l$ are the Jordan spaces of $MCM^{-1}$ and $\tilde M A {\tilde M}^{-1}$, respectively, with unchanged eigenvalues and dimensions.  An application of Corollary \ref{S: c2} gives the result.  
\end{proof}

\subsection{Main Results of Section \ref{Sec. Sw/oCRN0} }

We can now state and prove the main theorems of the section but first we give the promised converse to Corollary \ref{S: c1}.  We will continue to use the notation developed throughout the section.  
\begin{theorem}
\label{S: promised converse}
There is an $F_j=(f_1^j, ... , f_{n_j}^j)^t$  with linearly independent components satisfying $\C F_j=J_j^tF_j$ if and only if $U$ has a Jordan space decomposition, $\CN=\oplus_{j=1}^l\tilde S_j$ with some $S_k$ of dimension greater than or equal to $n_j$, and with eigenvalue $\lambda_j$.  
\end{theorem}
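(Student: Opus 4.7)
The plan is to convert this biconditional, via Theorem \ref{S: three equiv. Theorem}, into the purely linear-algebraic equivalence: $U$ admits a chain of length $n_j$ with eigenvalue $\lambda_j$ if and only if some Jordan subspace $\tilde S_k$ of $U$ has eigenvalue $\lambda_j$ and dimension at least $n_j$. The ``if'' direction is essentially Corollary \ref{S: c1}: given such a $\tilde S_k$, read off a chain basis, take its top $n_j$ vectors to obtain a chain of $U$ of length $n_j$ with eigenvalue $\lambda_j$, and apply Theorem \ref{S: three equiv. Theorem} to produce the desired $F_j$. I would dispatch this half in one sentence.

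For the converse I would begin by using Theorem \ref{S: three equiv. Theorem} to extract a chain $\E_1,\dots,\E_{n_j}$ of $U$ with eigenvalue $\lambda_j$. The single vector $\E_{n_j}$ records everything needed, since
$$(U-\lambda_j I_N)^{n_j-1}\E_{n_j}=\E_1\neq 0 \qquad \text{and}\qquad (U-\lambda_j I_N)^{n_j}\E_{n_j}=0.$$
I would next fix any Jordan subspace decomposition $\CN=\oplus_{k=1}^l\tilde S_k$ of $U$ and expand $\E_{n_j}=\sum_k v_k$ with $v_k\in\tilde S_k$. For each $\tilde S_k$ whose eigenvalue $\zeta_k$ differs from $\lambda_j$, the restriction of $U-\lambda_j I_N$ to $\tilde S_k$ is a lower-triangular matrix with nonzero constant $\zeta_k-\lambda_j$ on the diagonal, hence invertible, so $(U-\lambda_j I_N)^{n_j}\E_{n_j}=0$ forces the corresponding $v_k$ to vanish. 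Thus $\E_{n_j}$ lies in the sum of those $\tilde S_k$ for which $\zeta_k=\lambda_j$.

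To finish, I would observe that on any such $\tilde S_k$ the operator $U-\lambda_j I_N$ is nilpotent of index exactly $d_k:=\dim\tilde S_k$, so $(U-\lambda_j I_N)^{n_j-1}$ is zero on every $\tilde S_k$ with $\zeta_k=\lambda_j$ and $d_k<n_j$. Since $(U-\lambda_j I_N)^{n_j-1}\E_{n_j}\neq 0$, at least one block with $\zeta_k=\lambda_j$ must satisfy $d_k\geq n_j$, which is the required Jordan subspace.

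I do not anticipate a serious obstacle, since the linear-algebra machinery of Section \ref{Sub Sec: LA} and Theorem \ref{S: three equiv. Theorem} have already done the heavy lifting. The one point that merits care is the invertibility of $U-\lambda_j I_N$ on each Jordan subspace with a different eigenvalue; it is what allows one to ``project'' the chain away from the wrong generalized eigenspaces and concentrate $\E_{n_j}$ where the nilpotent-index argument applies. Everything else amounts to bookkeeping in the chain language already established.
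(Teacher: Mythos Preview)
Your argument is correct, and both directions go through as you describe. The ``if'' direction matches the paper's (Corollary \ref{S: c1}), but your ``only if'' direction takes a genuinely different route.

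The paper's proof of the forward direction extends the chain $\E_1,\dots,\E_{n_j}$ to a basis of $\CN$, expresses $U$ in that basis as
\[
\left[\begin{matrix} J & A \\ 0 & B \end{matrix}\right]
\]
with $J$ the $\lambda_j$-Jordan block of length $n_j$, and then invokes Theorem \ref{LA: extra theorem} to conclude that the Jordan form of $U$ contains a $\lambda_j$-block of length at least $n_j$. In other words, it routes the argument through the structural machinery built in Subsection \ref{Sub Sec: LA} (Lemmas \ref{LA: one J block lemma}--\ref{LA: 2 J blocks lemma} and Theorem \ref{LA: J block theorem}).

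Your approach instead decomposes the top generalized eigenvector $\E_{n_j}$ along a fixed Jordan decomposition of $U$, kills off the components in blocks with eigenvalue $\neq\lambda_j$ by invertibility of $U-\lambda_j I_N$ there, and then uses the nilpotent index on the remaining $\lambda_j$-blocks to force at least one of them to have dimension $\geq n_j$. This is the standard textbook argument that the longest $\lambda_j$-chain equals the largest $\lambda_j$-Jordan block, and it is entirely self-contained: you need none of the inductive lemmas or Theorem \ref{LA: extra theorem}. The paper's route, by contrast, keeps the proof inside the framework already developed and reused elsewhere (e.g.\ in Theorem \ref{main generalized theorem}), so it is more in keeping with the paper's internal economy even if less elementary in isolation.
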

\begin{proof}
As the converse is given by Corollary \ref{S: c1} we only provide the forward direction.  

Suppose $F_j$ is a solution with linearly independent component functions.  It follows that $\{\E_i=Pf_i^j : 1\leq i\leq n_j\}$ is a chain of $U$ with length $n_j$ and eigenvalue $\lambda_j$.  By Lemma \ref{LA: chain lin. ind lemma} we know the $\E_i$ are linearly independent, so we may extend  this chain to a basis of $\CN$.  Expressing $U$ in this basis we see $U$ is of the form 
$$\left[\begin{matrix} J & A \\ 0 & B \end{matrix}\right],$$  
with the Jordan block $J$ the compression of $U$ to $\langle \E_1, ..., \E_{n_j}\rangle.$  An application of Theorem \ref{LA: extra theorem} gives the result.  
\end{proof}
While we now have necessary and sufficient conditions for a solution with linearly independent components, our jubilation is postponed as the achievement is overshadowed by the main results of the section. 
\begin{theorem}
\label{S: t3}
\label{S: main theorem}
Let $\phi$ be an analytic map on the ellipsoid $\mathcal{D}$ fixing 0 with $\phi'(0)$ a full rank upper-triangular Jordan form matrix. Suppose also that $\C$ is compact on a Bergman space, $\A$. Then there is an analytic $F$ satisfying $F\circ\phi=\phi'(0)F$ such that the component functions of $F$ are linearly independent in $\A$.  
\end{theorem}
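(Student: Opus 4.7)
The plan is to assemble the solution $F=(F_1,\dots,F_m)^t$ block-by-block from chains of $\C$ produced by the compactness-plus-linear-algebra machinery of Sections~\ref{Sec:SA}--\ref{Sec. Sw/oCRN0}. Writing $\p=\mathrm{diag}(J_1,\dots,J_m)$ with each $J_j$ a lower-triangular Jordan block of length $n_j$ and eigenvalue $\lambda_j$, and letting $\Cn=S_1\oplus\cdots\oplus S_m$ be the induced Jordan decomposition, Lemma~\ref{S: three equiv. lemma} reduces the problem to producing, for each $j$, a chain $f_1^j,\dots,f_{n_j}^j$ of $\C$ with eigenvalue $\lambda_j$, subject to the global constraint that the full list $\{f_i^j\}$ be linearly independent in $\A$.

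First I would invoke the finite-dimensional reduction of Corollary~\ref{A:l1}. Since $\phi'(0)$ is invertible every $\lambda_j$ is nonzero, and because $\C$ is compact its diagonal entries, which account for all nonzero spectrum, accumulate only at $0$. Consequently, taking $P$ to be the orthogonal projection onto polynomials of sufficiently large bounded degree and writing $\C$ in $2\times 2$ block form with upper-left corner $U=P\C P$ and lower-right corner $W$, we may arrange that $\lambda_j\notin\sigma(W)$ for every $j$. By Theorem~\ref{S: three equiv. Theorem} and Lemma~\ref{S: U, C chain correspondence}, every chain of $U$ of length $n_j$ with eigenvalue $\lambda_j$ lifts uniquely to a chain of $\C$ whose entries lie in $\A$; producing the requisite chains of $U$ is therefore enough.

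The existence of those chains is exactly what Theorem~\ref{LA: workhorse theorem} supplies. Applied to $U$, whose upper-left $n\times n$ block is $\p$, it furnishes a Jordan decomposition $\CN=\tilde S_1\oplus\cdots\oplus\tilde S_l$ together with an injection $\tau$ from $\{S_j\}$ into $\{\tilde S_k\}$ that preserves eigenvalues and satisfies $\dim S_j\leq\dim\tau(S_j)$; in particular each $\tau(S_j)$ contains a chain of length $n_j$ with eigenvalue $\lambda_j$, obtained by pairing the eigenvector with the $n_j-1$ preceding chain vectors. Lift these chains via Lemma~\ref{S: U, C chain correspondence} to obtain $f_i^j\in\A$ satisfying (\ref{S: e1}), set $F_j=(f_1^j,\dots,f_{n_j}^j)^t$ and $F=(F_1,\dots,F_m)^t$, and apply Lemma~\ref{S: three equiv. lemma} once more to conclude $F\circ\phi=\phi'(0)F$.

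The step I expect to be the main obstacle — and the real dividend of the Jordan-form viewpoint — is verifying that the entire collection $\{f_i^j\}$ is linearly independent in $\A$. By Lemma~\ref{LA: chain lin. ind lemma} it suffices to prove linear independence of the $m$ eigenvectors of the $\C$-chains. Because $\tau$ is injective the chosen chains of $U$ live in distinct Jordan subspaces of $\CN$, so their eigenvectors are automatically linearly independent in $\CN$ as representatives of distinct direct summands; since $P$ is linear and sends each $\C$-eigenvector to the corresponding $U$-eigenvector, the $\C$-eigenvectors are linearly independent in $\A$ as well. This is precisely where the Jordan-form reduction earns its keep: without aligning the Jordan subspaces of $U$ with those of $\p$ via property~(1) of Theorem~\ref{LA: workhorse theorem}, one would have no reason to expect the chains chosen across different $j$ to be mutually independent.
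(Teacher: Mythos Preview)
Your proposal is correct and follows essentially the same route as the paper: reduce to the finite-dimensional corner $U$, invoke Theorem~\ref{LA: workhorse theorem} to obtain chains of $U$ of the right length and eigenvalue sitting in distinct Jordan summands, lift via Lemma~\ref{S: U, C chain correspondence}, and read off linear independence. The only cosmetic difference is that the paper argues linear independence in one stroke (the $Pf_i^j$ land on distinct elements of a basis of $\CN$), whereas you route through Lemma~\ref{LA: chain lin. ind lemma} and eigenvector independence; both work. One small misattribution: the cross-$j$ independence of your $U$-chains comes from property~(2) of Theorem~\ref{LA: workhorse theorem} (the injection $\tau$ and the direct-sum decomposition), not property~(1), which is only needed later for the full-rank question.
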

\begin{proof}
As before,
 $\phi'(0)^t=$diag$(J_1, ... , J_m)$,
and $\A=\h=H_1\oplus H_2$ with 
$$H_1=\langle z_1, z_2, ... , z_n, z_1^2, z_1z_2, ... , z_n^2, ... , z_1^K, z_1^{K-1}z_2, ... ,  z_n^K \rangle.$$ The projection to $H_1$ is denoted $P$, and $H_1$ is isomorphic to $\mathbb{C}^N$.  Further, we write $$\C= \left[ \begin{matrix}
 U&0\\
 V&W
\end{matrix} \right],$$  with $U=P \C P$ and $K$ sufficiently large so that $\lambda_j$ does not appear on the diagonal of $W$ for every $j=1, ... , m$.    

Let $\{S_j\}_{j=1}^m $ and $\{\tilde S_j\}_{j=1}^l$ be as in Theorem \ref{LA: workhorse theorem}, whose respective bases are the chains, $e_1^j, ... , e_{n_j}^j$  and $\E_1^j, ... , \E_{d_j}^j$.  Recall that $d_j=\dim(\tilde S_j) \\
\geq \dim(S_j)=n_j$ for each $1\leq j \leq m$, and that $\{\E_i^j: 1\leq i \leq d_j, 1\leq i \leq l\}$ is a basis for $H_1$.  

Lemma \ref{S: U, C chain correspondence} furnishes us with unique $F_j=(f_1^j, ... ,f_{n_j}^j)^t$ such that $\C F_j=J_j^tF_j$ and $Pf_{i}^j=\E_{d_j-n_j+i}^j$.  Since $\{\E_i^j: 1\leq i \leq d_j, 1\leq i \leq l\}$  is linearly independent, we see $\{f_i^j: 1\leq i \leq n_j, 1\leq i \leq l\}$ is, too.  
\end{proof} 
Using Section \ref{Sec:SA} we can reformulate Theorem \ref{S: main theorem} in terms of our original hypotheses for Schroeder's equation on the ball $\Bn$ and are rewarded with the following result.  
\begin{theorem}[Main Theorem of Section \ref{Sec. Sw/oCRN0}]
\label{S: c3}
\label{S: main corollary}
Let $\phi$ be a self-map of $\mathbb{B}^n $ fixing 0 with $\phi'(0)$ invertible and $\phi$ not unitary on any slice.  Then there exists an analytic $F$ satisfying $\C F=\phi'(0)F$ and having component functions that are linearly independent in $A_G^2(\Bn)$.
\end{theorem}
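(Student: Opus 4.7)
The plan is to deduce this corollary from Theorem \ref{S: main theorem} (the Jordan-form version on an ellipsoid) by applying the reduction machinery assembled in Section \ref{Sec:SA}. First I would invoke Theorem \ref{T: CM cpt} to fix a weight such as $G(|z|) = \exp(-q/(1-|z|))$ for some $q>0$, so that $\C$ is Hilbert--Schmidt (hence compact) on $A_G^2(\Bn)$.

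Next, by Proposition \ref{SA: p1}(1), choose an invertible matrix $D$ with $D\phi'(0)D^{-1}$ in upper-triangular Jordan form, and set $\tilde\phi = D\phi D^{-1}$, which is a self-map of the ellipsoid $\mathcal{D} = D\Bn$ fixing $0$. Put $H = (G \circ D^{-1})/|\det D|^2$. Parts (2) and (3) of Proposition \ref{SA: p1} say that $\iota(f) = f\circ D^{-1}$ is an isometric isomorphism of $A_G^2(\Bn)$ onto $A_H^2(\mathcal{D})$ and that $C_{\tilde\phi}$ is compact on $A_H^2(\mathcal{D})$. Moreover $\tilde\phi'(0) = D\phi'(0)D^{-1}$ is invertible and in upper-triangular Jordan form, so $\tilde\phi$ and the space $A_H^2(\mathcal{D})$ meet every hypothesis of Theorem \ref{S: main theorem}.

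Apply Theorem \ref{S: main theorem} to produce an analytic $\tilde F:\mathcal{D}\to\Cn$ satisfying $\tilde F\circ\tilde\phi = \tilde\phi'(0)\tilde F$ whose component functions are linearly independent in $A_H^2(\mathcal{D})$. Now define $F:\Bn\to\Cn$ by $F(z) = D^{-1}\tilde F(Dz)$. By Lemma \ref{SA: l1}, $F$ is a Schroeder solution for $\C$ on $\Bn$; that is, $\C F = \phi'(0)F$.

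It remains to verify that the component functions of $F$ are linearly independent in $A_G^2(\Bn)$. Writing $\tilde F = (\tilde f_1,\dots,\tilde f_n)^t$, the vector $\tilde F\circ D = (\iota^{-1}\tilde f_1,\dots,\iota^{-1}\tilde f_n)^t$ has components lying in $A_G^2(\Bn)$, and they are linearly independent there because $\iota^{-1}$ is an isomorphism. The components of $F = D^{-1}(\tilde F\circ D)$ are obtained from those of $\tilde F\circ D$ by the invertible linear map $D^{-1}$, so linear independence is preserved. No serious obstacle is anticipated: the substantive analytic and algebraic content has already been carried out in Sections \ref{Sec:SA}--\ref{Sec. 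Sw/oCRN0}, and this corollary is essentially a bookkeeping step that undoes the Jordan-form reduction.
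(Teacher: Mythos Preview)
Your proposal is correct and follows exactly the approach the paper indicates: the paper does not write out a separate proof of this theorem but simply states that, using Section~\ref{Sec:SA}, Theorem~\ref{S: main theorem} can be reformulated under the original hypotheses on $\Bn$. You have carried out precisely that translation via Theorem~\ref{T: CM cpt}, Proposition~\ref{SA: p1}, and Lemma~\ref{SA: l1}, and your verification that linear independence survives the passage $\tilde F \mapsto D^{-1}(\tilde F\circ D)$ is the one detail the paper leaves implicit.
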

Unfortunately, the fact that $F$ has linearly independent component functions does not imply that $F(\{|z|<\epsilon\})$ is open in $\mathbb{C}^n$ for small $\epsilon$ (as is the case when $F'(0)$ is full rank).  For example, $(z_1, z_2)\mapsto (z_1,z_1^2)$  contains no open subset of $\mathbb{C}^2$ in its image, while  $(z_1, z_2)\mapsto (z_1,z_2^2)$ does.  Additionally, neither of these maps have full rank near 0, but both do have linearly independent component functions.  

\begin{example}
\label{S: ex 0}
\end{example}
Consider $\phi(z_1,z_2)=(z_1/2, z_2/4 + z_1^2/16)$ on $\mathbb{B}^2$.  Clearly $\phi(0)=0$ and $|\phi(z)|^2\leq |z_1|^2/4+4(|z_2|^2/16+|z_1|^4/16^2)<|z|$ for $0<|z|<1$.  Now,
$$\phi'(0)^t=\mbox{diag}(1/2, 1/4),$$
 and
$$\C=\left[ \begin{matrix} 
1/2 	& 0 	& 0 	& 0 &\dots \\
0 		& 1/4	& 0		& 0 & \dots\\
0		& 1/16	& 1/4 	& 0 & \dots \\
0		& 0		& 0		& 1/8 &\dots \\
\vdots 	&\vdots &\vdots & \vdots &\ddots
\end{matrix}\right],$$ 
so $\ker(\C-(I/4))=\langle z_1^2 \rangle.$  $\phi'(0)$ has two chains both of length 1, and eigenvalues 1/2 and 1/4, respectively.  $\C$ has a (1/2)-chain of length 1, and a (1/4)-chain of length 2, namely $16z_2\mapsto z_1^2\mapsto 0$.  Thus, $(az_1,bz_1^2)^t$ with $a$ and $b$ fixed complex numbers are the only solutions to Schroeder's equation.  Consequently, every such solution must map open subsets of $\mathbb{B}^2$ into a 1-dimensional manifold. This also implies that no full rank solution can exist, but certainly $(z_1,z_1^2)^t$ is a solution with linearly independent components.  

We invite the interested reader to consult \cite[\S 4]{CM03}, a section dedicated to similar examples.  

 This section showed that the existence of a solution, $F$, with linearly independent components was characterized by the Jordan subspaces of $U$ in relation to those $\phi'(0)^t$, and hence, that such a solution always exists (Theorem \ref{S: t3}).  While the harder problem of finding a full rank solution is expectedly less generous, in retrospect we know how to go about it.  First we find a the appropriate chains of $U$ (or equivalently $\C$) and then we check to see if their projections to $\langle z_1, ... , z_n \rangle$ form a linearly independent set. Before delving into this topic, we investigate known obstructions to a full rank solution, and the role of resonant eigenvalues.  

\section{Resonance}
\label{Sec: R}
 
Following the terminology of the literature, we say $\lambda_j$ is a ``resonant eigenvalue" of $\phi'(0)$ if (and only if) $\lambda_j=\lambda_1^{k_1} ... \lambda_m^{k_m}$ for some non-negative integers $k_i$ satisfying $\sum k_i>1$ (for example \cite[p.2]{Enoch}). If we assume $\phi'(0)$ is upper triangular (or better in Jordan form) so that $\C$ is lower triangular, then we notice that $\lambda_j$ appears on the diagonal of $\C$ below $\phi'(0)^t$ if and only if it is a resonant eigenvalue.  In \cite{CM03}, Cowen and MacCluer identify that such resonance can cause an obstruction to finding a full rank solution.  On the other hand, examples of a full-rank solution in the presence of resonance are given \cite[Examples 2,3]{CM03}. 

	Subsequently, \cite{Enoch} addresses Schroeder's equation in several variables from a completely algebraic point of view, representing all functions as formal power series.  In other words, the coordinate functions of $\phi$ and $F$ are considered as column vectors (with the same notation as the current paper) but with no consideration of convergence of the corresponding Taylor series.   
	
\begin{theorem}  \cite[Theorem 4.3]{Enoch}
\label{enoch's theorem}
Let $\phi$ be an analytic self-map of the ball, fixing 0, $\phi'(0)$ non-singular, and $\phi$ not unitary on any slice.  If $\phi$ has no resonant eigenvalues, then there exists a formal power series solution to Schroeder's equation with full rank near 0.   
\end{theorem}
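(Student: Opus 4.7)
The plan is to adapt the classical Koenigs argument order-by-order, solving Schroeder's equation one homogeneous degree at a time. By the reduction of Section \ref{Sec:SA}, I may assume without loss of generality that $\phi'(0)=J$ is in Jordan form; the conjugation is linear and invertible, so it preserves the full-rank condition at $0$ and the no-resonance hypothesis. Writing $\phi(z)=Jz+\sum_{\ell\geq 2}\phi_\ell(z)$ with each $\phi_\ell$ homogeneous of degree $\ell$, seek $F$ in the normalized form $F(z)=z+\sum_{k\geq 2}F_k(z)$. Then $F'(0)=I_n$ is automatically of full rank, and the task reduces to determining the $\Cn$-valued homogeneous polynomials $F_k$.

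Extracting the degree-$k$ homogeneous part of the equation $F(\phi(z))=JF(z)$ produces, for each $k\geq 2$, a recursion of the form
$$JF_k(z)-F_k(Jz)=G_k(z),$$
where $G_k$ is a homogeneous polynomial of degree $k$ depending only on $F_1,\ldots,F_{k-1}$ and $\phi_2,\ldots,\phi_k$. Let $\mathcal{P}_k^n$ denote the finite-dimensional space of $\Cn$-valued homogeneous polynomials of degree $k$ and define a linear operator $T_k$ on $\mathcal{P}_k^n$ by $T_k(H)(z)=JH(z)-H(Jz)$. The recursion reads $T_kF_k=G_k$, so the whole problem reduces to showing that $T_k$ is invertible for every $k\geq 2$.

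The spectrum of $T_k$ can be read off a triangular representation. Writing $\mathcal{P}_k^n\cong\Cn\otimes\mathcal{P}_k$ with $\mathcal{P}_k$ the scalar homogeneous polynomials of degree $k$, we have $T_k=J\otimes I-I\otimes M_J$ where $M_J(p)=p\circ J$. A direct multinomial expansion of $(Jz)^\alpha$ shows that $M_J$ is triangular in the monomial basis of $\mathcal{P}_k$, ordered as in Section \ref{Sec: N}, with diagonal entries $\lambda^\alpha=\lambda_1^{\alpha_1}\cdots\lambda_n^{\alpha_n}$ for $|\alpha|=k$; the off-diagonal $1$'s coming from the Jordan blocks contribute only monomials strictly earlier in the dictionary ordering. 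Combined with the triangularity of $J$ itself (diagonal $\lambda_1,\ldots,\lambda_n$), the operator $T_k$ is triangular with spectrum $\{\lambda_i-\lambda^\alpha: 1\leq i\leq n,\ |\alpha|=k\}$. The no-resonance hypothesis states precisely that $\lambda_i\neq\lambda^\alpha$ for every eigenvalue $\lambda_i$ of $\phi'(0)$ and every multi-index $\alpha$ with $|\alpha|\geq 2$, so every eigenvalue of $T_k$ is nonzero and $T_k$ is invertible.

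With each $T_k$ invertible, set $F_k=T_k^{-1}G_k$ and induct on $k$ to produce a formal power series $F=z+\sum_{k\geq 2}F_k$ satisfying $F\circ\phi=\phi'(0)F$ with $F'(0)=I_n$, hence full rank at $0$. The step most in need of care is the triangularity claim for $M_J$ when $J$ has nontrivial Jordan blocks: one must verify that substituting $(Jz)_i=\lambda_iz_i+\epsilon_iz_{i+1}$ (with $\epsilon_i\in\{0,1\}$) into $z^\alpha$ produces $\lambda^\alpha z^\alpha$ plus only monomials strictly earlier in the dictionary ordering fixed in Section \ref{Sec: N}. This is a finite combinatorial check, but it is the step where the Jordan-form setup beyond mere diagonalizability is essential, and it is what makes the eigenvalue list $\lambda_i-\lambda^\alpha$ correct in the general case.
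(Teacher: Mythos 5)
The paper does not supply its own proof of this statement---it quotes \cite[Theorem 4.3]{Enoch} as a black box and then uses Theorem \ref{formal p.s. theorem} to upgrade Enoch's formal solution to an analytic one. Your argument is, in substance, Enoch's proof: the order-by-order Poincar\'e normal-form recursion, where the degree-$k$ homological operator $T_k(H)(z)=JH(z)-H(Jz)$ must be inverted, and the no-resonance hypothesis is precisely what makes every difference $\lambda_i-\lambda^\alpha$ nonzero. The structure is correct: conjugating to Jordan form is harmless at the formal level, the degree-$k$ extraction does isolate $JF_k-F_k\circ J=G_k$ with $G_k$ depending only on $\phi_2,\dots,\phi_k$ and $F_2,\dots,F_{k-1}$, and the Kronecker-sum identification $T_k=J\otimes I-I\otimes M_J$ gives the right spectrum because the two summands commute and hence triangularize simultaneously. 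This is a genuinely different route from the machinery the paper builds (weighted Bergman spaces, compactness of $\C$, and the finite upper-left corner $U$); the paper's approach proves a sharper if-and-only-if criterion for a full-rank solution without assuming no-resonance, at the cost of functional-analytic overhead, while your recursion buys an explicit algorithm but only the sufficient direction.

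Two imprecisions should be fixed, though neither is fatal. First, the triangularity direction for $M_J$ is reversed. In the dictionary ordering of Section \ref{Sec: N}, where $\alpha<\beta$ means $\alpha_{j_0}>\beta_{j_0}$ at the first index where they differ, expanding $(Jz)^\alpha=\prod_i(\lambda_iz_i+\epsilon_iz_{i+1})^{\alpha_i}$ yields $\lambda^\alpha z^\alpha$ plus monomials that come strictly \emph{later}, not earlier: e.g.\ $(Jz)^{(2,0)}=\lambda_1^2z_1^2+2\lambda_1\epsilon_1 z_1z_2+\epsilon_1^2z_2^2$ and $(2,0)<(1,1)<(0,2)$. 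So $M_J$ is lower triangular in this basis; the diagonal entries are still $\lambda^\alpha$, so nothing downstream changes. Second, it is not literally true that $T_k$ is triangular in the $e_i\otimes z^\alpha$ basis when $J$ is upper triangular and $M_J$ lower triangular; ordering by $\alpha$ first and then $i$, $T_k$ is block lower triangular with diagonal blocks $J-\lambda^\alpha I_n$, which are themselves upper triangular. That block-triangular form (or the simultaneous-triangularization remark) is what actually yields $\sigma(T_k)=\{\lambda_i-\lambda^\alpha:1\leq i\leq n,\ |\alpha|=k\}$, so the invertibility conclusion under no-resonance stands.
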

To understand this fully, write $F=(f_1, ..., f_n)^t$, and then write
$$f_j=\left[\begin{matrix}x_1^j \\ x_2^j \\ \vdots \end{matrix} \right].$$
Now, writing $F\circ \phi=\phi'(0)F$ and $F'(0)=I_n$  gives an infinite system of equations with the $x_i^j$ as the unknowns.  In the absence of resonance, Enoch has shown that the $x_i^j$ can be found to satisfy these equations.   The issue which remains is of course whether the corresponding  power series for each $f_j$ actually converges on $\Bn$.  

\begin{theorem}
\label{formal p.s. theorem}
Let $\phi$ be an analytic self-map of the ball, fixing 0, $\phi'(0)$ non-singular, and $\phi$ not unitary on any slice.  Any formal power series $F$ satisfying $\C F=\phi'(0)F$ is indeed analytic on $\Bn$.  
\end{theorem}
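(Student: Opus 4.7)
The plan is to reduce to the Jordan-form setting of Section~\ref{Sec:SA}, repackage the chain relations satisfied by the components of $F$ as honest eigenvalue equations for a compact operator, and then invoke part~(3) of Corollary~\ref{A:l1} verbatim.

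First I would transport the formal solution via the change of variables $\iota(f)=f\circ D^{-1}$ from Proposition~\ref{SA: p1}. Because $D^{-1}$ is linear with no constant term, composition with $D^{-1}$ is well-defined on formal power series (it determines the coefficients of $\iota f$ of degree $k$ as a finite linear combination of the coefficients of $f$ of degree $k$) and transforms a formal solution of $\C F=\phi'(0)F$ on $\Bn$ into a formal solution of the analogous equation for $D\phi D^{-1}$ on $\D=D\Bn$, where now $D\phi'(0)D^{-1}$ is in upper-triangular Jordan form and $C_{D\phi D^{-1}}$ is compact on $\A$. Since $\iota$ maps $A_G^2(\Bn)$ isometrically onto $\A$ and every element of $\A$ is by definition analytic on $\D$, it suffices to prove that each component of the transformed solution lies in $\A$.

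In the reduced setting write $\p=\,$diag$(J_1,\ldots,J_m)$ and split $F=(F_1,\ldots,F_m)^t$ into blocks matching the Jordan blocks. Equation~(\ref{S: e1}) shows that the components $f_1^j,\ldots,f_{n_j}^j$ of each $F_j$ form a \emph{formal} chain of $\C$ with eigenvalue $\lambda_j$, so $(\C-\lambda_j I)^i f_i^j = 0$ for $1\le i\le n_j$. Using the trick from the proof of Lemma~\ref{S: U, C chain correspondence}, expand
\[
(\C-\lambda_j I)^i \;=\; \tilde C + (-\lambda_j)^i I,
\]
where $\tilde C$ is a polynomial in $\C$ with zero constant term. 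Since $\C$ is compact and lower-triangular on $\A$, so is $\tilde C$. Therefore each $f_i^j$ is a formal solution of the eigenvalue equation $(\tilde C-\mu I)h=0$ with nonzero eigenvalue $\mu=-(-\lambda_j)^i$ (nonzero because $\phi'(0)$ is invertible). Corollary~\ref{A:l1}(3) then gives $f_i^j\in\A$, and pulling back through $\iota^{-1}$ completes the argument.

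The only real obstacle is conceptual: recognizing that the chain relation iterates to an honest eigenvalue equation for a compact operator, so that the convergence principle already packaged as Corollary~\ref{A:l1}(3) can be applied directly to each $f_i^j$ in turn. The remaining details—verifying that formal-series composition with $D^{-1}$ makes sense and that $\tilde C$ inherits both lower-triangularity and compactness from $\C$—are straightforward.
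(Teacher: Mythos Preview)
Your argument is correct and is precisely the route the paper's one-line remark points to: unpack the chain relations as in Lemma~\ref{S: U, C chain correspondence} and apply Corollary~\ref{A:l1}(3) to each component. One harmless indexing slip: from Equation~(\ref{S: e1}) the chain runs $(\C-\lambda_j I)\colon f_1^j\mapsto\cdots\mapsto f_{n_j}^j\mapsto 0$, so it is $(\C-\lambda_j I)^{\,n_j-i+1}f_i^j=0$ rather than exponent $i$, but this does not affect the argument.
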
	
This theorem follow directly from the uniqueness statement of Theorem \ref{S: three equiv. Theorem}.  In particular, Corollary \ref{A:l1} and the following paragraphs facilitate this line of reasoning.   When $\phi'(0)$ is upper triangular and $\C$ lower triangular (which can always be assumed with no loss of generality), the infinite system of equations written at once by Schroeder's equation can be solved inductively.  In fact, Theorem \ref{S: main theorem} assures us that at least one solution exists, although there may be many as in \cite[\S Example 2]{CM03}, and Theorem \ref{formal p.s. theorem} assures us all solutions found in this purely algebraic manner are indeed analytic.  In light of this result, Enoch's algebraic method may be the best way to find a solution in practice.    
 
\begin{corollary}
\label{absence of resonance}
Let $\phi$ be an analytic self-map of the ball, fixing 0, $\phi'(0)$ non-singular, and $\phi$ not unitary on any slice.  If $\phi$ has no resonant eigenvalues, then a full rank solution to Schroeder's equation exists.  
\end{corollary}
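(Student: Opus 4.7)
The plan is a one-line combination of the two preceding theorems. By Theorem \ref{enoch's theorem}, the absence of resonance among the eigenvalues of $\phi'(0)$ yields a formal power series $F=(f_1,\dots,f_n)^t$ satisfying $\C F=\phi'(0)F$ with $F'(0)$ non-singular (indeed Enoch normalizes so that $F'(0)=I_n$). By Theorem \ref{formal p.s. theorem}, any such formal solution is automatically analytic on $\Bn$, so the component functions $f_j$ really do represent holomorphic functions on the ball.

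It remains only to pass from ``non-singular linear part'' to ``full rank near $0$''. Since $F$ is holomorphic in a neighborhood of $0$ and the Jacobian $F'(0)$, whose entries are the coefficients of the linear terms of its convergent Taylor expansion, is invertible, continuity of $z\mapsto \det F'(z)$ gives $\det F'(z)\neq 0$ on some neighborhood of $0$. This is precisely the full rank near $0$ condition, so $F$ is a full rank Schroeder solution in the sense defined in the introduction.

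There is essentially no obstacle, since the two invoked theorems do all of the real work: Enoch's algebraic machine produces the formal solution, and the author's analytic result certifies convergence. The only point worth highlighting in writing this up is the interface between the two notions of ``full rank near $0$'': Enoch's formal statement refers to the invertibility of the matrix of linear coefficients of $F$, while the analytic definition refers to the invertibility of $F'(z)$ for $z$ near $0$. As observed above, these two notions coincide for the convergent Taylor expansion once Theorem \ref{formal p.s. theorem} has been applied, so the identification is automatic and no further argument is needed.
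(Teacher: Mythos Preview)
Your proof is correct and follows exactly the route the paper intends: the corollary is placed immediately after Theorems~\ref{enoch's theorem} and~\ref{formal p.s. theorem} precisely because it is their direct combination, and your argument spells this out. The extra remark translating ``$F'(0)$ invertible'' into ``full rank near $0$'' via continuity of $\det F'(z)$ is the only bookkeeping needed, and you handle it correctly.
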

	We have obtained that resonance is necessary whenever there is no full rank solution, but what is obscure is why resonance is not sufficient to prevent a solution (recall that \cite{CM03} provides examples of a full rank solution in the presence of resonance.)   To illustrate this phenomenon, we provide an example similar to \ref{S: ex 0} which has a full rank solution.
 
\begin{example}
\label{R: ex2}
Now set $\phi(z_1,z_2)=(1/2)(z_1, z_2/2)$.  Similar to the \ref{S: ex 0}, it is easy to see that $\phi$ satisfies our hypotheses with domain $\mathbb{B}^2$ and experiences the same resonance.  We now have  
$$\C=\begin{array}{cc} 
& \begin{matrix}
\phantom{..}\phi_1 \phantom{.}& \phi_2\phantom{,} & \phi_1^2 \phantom{.} & \phi_1\phi_2  \phantom{.}\dots
\end{matrix}\\ 
\begin{matrix}
z_1\\ z_2\\ z_1^2 \\ z_1z_2\\ \vdots
\end{matrix}
&\left[\begin{matrix}
1/2 & 0 	& 0 	&0& \dots\\
0 	& 1/4	& 0 	&0& \dots \\
0 	& 0		& 1/4  	& 0&\dots \\
0 	& 0 	& 0 	& 1/8 & \dots\\
\vdots&\vdots&\vdots&\vdots& \ddots
\end{matrix} \right]
\end{array}=\mbox{diag}(\frac{1}{2},\frac{1}{4},\frac{1}{4},\frac{1}{8},...)$$
so it is easy to see that $f_1=z_1, $ and $f_2=z_2$ are eigenfunctions with eigenvalues 1/2 and 1/4, respectively.  Thus $F=(af_1,bf_2)^t$  with any nonzero $a$ and $b$ gives a full rank solution.  
\end{example}

In both examples $\p$ has one (1/4)-Jordan block of length 1, but  $\C$ has one (1/4)-Jordan block of length 2 in Example \ref{S: ex 0} and two (1/4)-Jordan blocks of length 1 in Example \ref{R: ex2}.  It is the fact that the original (1/4)-Jordan block of $\p$ gained length in the first example that forces any (1/4)-eigenfunction to have both first derivatives 0, thereby preventing a full rank solution.   Resonance is only responsible for the appearance of 1/4 on the diagonal of $\C$ below $\p$.  

We shall see that the only possible obstruction to a full rank solution is illustrated by the Example \ref{S: ex 0}.  Roughly speaking, what goes wrong is that if an original Jordan block of $\p$ gains length when $U$ is put into Jordan form, then it is necessary for the corresponding eigenfunction to have all of its first derivatives 0.  Since a solution $F$ must include an eigenfunction for each original Jordan block of $\p$, any such solution $F$ cannot be full rank.  

\section{Criteria for a Full Rank Solution}
\label{Sec: CfaFRS}
By contrast to this paper, \cite{CM03} assumes with no loss of generality that $\phi'(0)$ is upper triangular by conjugating $\phi$ with a unitary matrix.  Since a unitary change of basis preserves the ball and even the norm in $\A$, few of the concerns of  Section \ref{Sec:SA} arise.  The main theorem of \cite{CM03} is that if $\phi'(0)$ is diagonalizable (in addition to our hypotheses), then a full rank Schroeder solution exists if and only if $\dim[\ker(\C-\mu I)]$ is the number of times $\mu$ appears on the diagonal of $\C$.  Equivalently, that $U$ is also diagonalizable precisely when a full rank solution exists.  Our main theorem, (Theorem \ref{C: main theorem part 2}) is an extension of this result to the general case.   

While in Section \ref{Sec. Sw/oCRN0} it was useful to decompose $\Cn$ into the Jordan subspaces, in the current section it will be useful to consider generalized eigenspaces.  Recall that if $L$ is a linear operator on a vector space $X$, for each distinct eigenvalue, $\mu,$ the corresponding generalized eigenspace is 
$$T_{\mu}= \bigcup_{k=1}^{\dim(X)} \ker[(L-\mu I)^k].$$
In our case, $L=\phi'(0)^t$ with  indistinct eigenvalues $\lambda_1, ..., \lambda_m$.  Thus we let $\mu_1,..., \mu_k$ be the distinct eigenvalues of $\phi'(0)^t$, and denote the generalized eigenspace decomposition   $\Cn=T_{\mu_1}\oplus ... \oplus T_{\mu_k}.$  

We will denote the compression of $\phi'(0)^t$ to $T_{\mu_j}$ by $L_j$, so that in block form $\phi'(0)^t= $ diag$(L_1, ... , L_k)$, and each $L_j$ is just the Jordan form matrix with those Jordan blocks of $ \phi'(0)^t$ that have $\mu_j$ on their diagonal.  Correspondingly, we write $F=(F_{\mu_j})^t, $ in block form.  For example, if we assume $\lambda_1, ..., \lambda_s$ are the only eigenvalues equal to $\mu_1$, we have $L_1=$diag$(J_1, ... , J_{s})$,  and $F_{\mu_1}=(F_1, ..., F_{s})^t$ with $F_j$ as defined in Section \ref{Sec. Sw/oCRN0}. Now, $F\circ\phi=\phi'(0)F$ becomes the system of equations 
\begin{equation}
\label{C: e1}
F_{\mu_j}\circ\phi=L_j^tF_{\mu_j} 
\end{equation}
for $1\leq j \leq k.$

The advantage of reorganizing into generalized eigenspaces is that  we can reduce the problem to finding a full rank solution $F_{\mu_j}$ for each $j$.  Specifically, if each $F_{\mu_j}$ can be found to satisfy (\ref{C: e1}) and have full rank near 0, then Proposition \ref{C: p1} shows that $F$ is indeed a full rank solution.  

\begin{proposition}
\label{C: p1}
If Equation (\ref{C: e1}) is satisfied and each $F_{\mu_j}$ has linearly independent component functions, then $F$ has  linearly independent component functions. In particular, $F$ is a full rank solution if and only if Equation (\ref{C: e1}) holds with each $F_{\mu_j}$ full rank near 0.  
\end{proposition}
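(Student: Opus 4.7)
The plan is to prove both statements by exploiting the generalized eigenspace decomposition of $\C$ together with the intertwining property of the gradient-at-$0$ map.  The central observation is that, under Equation (\ref{C: e1}), the component functions of $F_{\mu_j}$ lie in $E_{\mu_j}$, the generalized eigenspace of $\C$ for the eigenvalue $\mu_j$.  Indeed, Equation (\ref{C: e1}) decomposes further into block equations $F_k\circ\phi = J_k^tF_k$ for each Jordan block $J_k$ comprising $L_j$, and by Lemma \ref{S: three equiv. lemma} the components of each such $F_k$ form a $\mu_j$-chain of $\C$, hence lie in $\ker(\C-\mu_j I)^{n_k}\subseteq E_{\mu_j}$.

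For the first statement, I would take a vanishing linear combination $\sum_{j,i} a_i^j f_i^{\mu_j}=0$, group it as $\sum_j G_j = 0$ with $G_j := \sum_i a_i^j f_i^{\mu_j}\in E_{\mu_j}$, and invoke the standard fact that the generalized eigenspaces of any linear operator associated to distinct eigenvalues are in direct sum (which one sees at once by applying a polynomial that vanishes to sufficient order at all but one of the $\mu_j$).  This forces each $G_j=0$, and then the hypothesis of linear independence within each $F_{\mu_j}$ yields $a_i^j=0$.

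For the ``in particular'' part, the block-diagonal form $\phi'(0)=\mathrm{diag}(L_1,\dots,L_k)$ makes $F\circ\phi=\phi'(0)F$ equivalent to the system (\ref{C: e1}), so the only remaining issue is to show that $F'(0)$ has rank $n$ iff each $F_{\mu_j}'(0)$ has rank $r_j:=\dim T_{\mu_j}$.  Here I would use the intertwining identity $Q\C = \phi'(0)^t Q$ (where $Qf=\nabla f(0)$), which is immediate from the chain rule at the fixed point $0$, and iterate it to conclude that $Q$ carries $E_{\mu_j}$ into $T_{\mu_j}$.  Consequently the rows of $F_{\mu_j}'(0)$, being the vectors $Qf_i^{\mu_j}$, all lie in $T_{\mu_j}$.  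The decomposition $\Cn = T_{\mu_1}\oplus\cdots\oplus T_{\mu_k}$ with $\sum r_j=n$ then shows that the $n$ rows of $F'(0)$ are linearly independent iff the $r_j$ rows within each block are linearly independent, i.e.\ iff each $F_{\mu_j}'(0)$ is of rank $r_j$.

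I do not anticipate a genuine obstacle: the only technical ingredients are the routine chain-rule identity $Q\C=\phi'(0)^tQ$, the standard direct-sum property of generalized eigenspaces, and the block reduction of Schroeder's equation already recorded in Section \ref{Sec: CfaFRS}.  The slight care needed is to keep straight that ``full rank near $0$'' for $F_{\mu_j}:\mathcal{D}\to\mathbb{C}^{r_j}$ means rank $r_j$ (the maximum possible), which is precisely what the argument requires in order to glue the block-wise full-rank conditions into full rank of $F'(0)$.
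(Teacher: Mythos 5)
Your proof of the first sentence coincides with the paper's: both rest on the fact that the components of $F_{\mu_j}$ lie in the generalized eigenspace of $\C$ for $\mu_j$, and that generalized eigenspaces for distinct eigenvalues are in direct sum. The paper stops there, leaving the ``in particular'' clause implicit; you supply that remaining step correctly by observing the intertwining $Q\C=\phi'(0)^tQ$ (the chain rule at the fixed point), so that $Q$ sends $E_{\mu_j}$ into $T_{\mu_j}$, and then matching the direct-sum decomposition $\Cn=T_{\mu_1}\oplus\cdots\oplus T_{\mu_k}$ against the blocks of rows of $F'(0)$ — a natural completion of the argument and consistent with the machinery the paper builds elsewhere (e.g.\ the identity $Qf=\nabla f(0)$ and $\phi'(0)^t=Q\C Q$).
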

\begin{proof}
Since each coordinate function of $F_{\mu_j}$ is a member of the generalized eigenspace of $\C$ with eigenvalue $\mu_j$ and the $\mu_j$ are distinct, the component functions of $F$ are linearly independent as soon as the component functions of each $F_{\mu_j}$ are.  
\end{proof}

Thus we fix an eigenvector $\mu$ and seek a full rank solution to $F_{\mu}\circ\phi=L_j^tF_{\mu}$.

\begin{theorem}
\label{C: mt}
\label{C: main theorem 1}
Let $\phi$ be an analytic map on the ellipsoid $\mathcal{D}$ fixing 0 with $\phi'(0)$ a full rank upper-triangular Jordan form matrix. Suppose also that $\C$ is compact on the Bergman space, $\h=A_H^2(\D)$ as defined above.
A full rank solution $F_\mu$ exists if and only if $Q(\ker(U-\mu I_N))=\ker(\p-\mu I_n)$, with $Q$ the orthogonal projection to $\langle z_1, ... , z_n\rangle.$  Consequently, 
a full rank solution $F$ exists if and only if 
$Q(\ker(U-\mu_j I_N))=\ker(\p-\mu_j I_n)$ for every eigenvalue $\mu_j$ of $\p$.   
\end{theorem}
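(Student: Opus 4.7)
The plan is to fix a distinct eigenvalue $\mu$ of $\p$ and prove the first statement; the consequence for $F$ then follows by applying Proposition \ref{C: p1} across the $\mu_j$'s. First I would translate full rank of $F_\mu$ into a purely linear-algebraic condition. Writing $F_\mu$ as the stack of chains $F_j = (f_1^j, \dots, f_{n_j}^j)^t$ with $\lambda_j = \mu$, full rank near $0$ is equivalent to the gradients $Q f_i^j = \nabla f_i^j(0)$ being linearly independent in $\Cn$. Applying $Q$ to the chain relation $(\C - \mu I) f_i^j = f_{i+1}^j$ and using that $\p$ is the upper-left $n \times n$ block of $\C$ yields $(\p - \mu I_n) Q f_i^j = Q f_{i+1}^j$. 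By Lemma \ref{LA: chain lin. ind lemma}, linear independence of all these gradients reduces to linear independence of just the eigenvectors $Q f_{n_j}^j$; since there are $\dim \ker(\p - \mu I_n)$ of them (one per $\mu$-Jordan block of $\p$), this is in turn equivalent to $\{Q f_{n_j}^j\}_{\lambda_j = \mu}$ forming a basis of $\ker(\p - \mu I_n)$.

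For the forward direction, assume $F_\mu$ is full rank. Setting $v_j = P f_{n_j}^j$, Corollary \ref{A:l1}(2) and Lemma \ref{S: U, C chain correspondence} give $v_j \in \ker(U - \mu I_N)$ with $Q v_j = Q f_{n_j}^j$. The reduction above shows these $v_j$'s project onto a basis of $\ker(\p - \mu I_n)$, so $Q(\ker(U - \mu I_N)) \supseteq \ker(\p - \mu I_n)$. The reverse containment is automatic from the block-lower-triangular structure of $U$ with $\p$ in its upper-left corner: any $v \in \ker(U - \mu I_N)$ satisfies $(\p - \mu I_n)(Q v) = 0$, so $Q v \in \ker(\p - \mu I_n)$.

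For the reverse direction, assume $Q(\ker(U - \mu I_N)) = \ker(\p - \mu I_n)$. By Theorem \ref{LA: workhorse theorem}(3) this forces $d_j = n_j$ for every $j$ with $\lambda_j = \mu$, i.e.\ no $\mu$-Jordan block of $\p$ gains length in passing to $U$. Running the inductive construction of the Jordan basis $\{\E_i^j\}$ for $U$ from the proof of Theorem \ref{LA: workhorse theorem}, every modification made to a $\mu$-chain comes from Lemma \ref{LA: one J block lemma} in its non-length-gaining case, whose corrections are multiples of the freshly appended coordinate (outside $\Cn$); Lemma \ref{LA: 2 J blocks lemma} mixes only same-eigenvalue chains, so its applications at eigenvalues other than $\mu$ never touch $\mu$-chains. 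Consequently $Q(\E_i^j) = e_i^j$ for $\lambda_j = \mu$ and $1 \leq i \leq n_j$. Lifting each such chain via Lemma \ref{S: U, C chain correspondence} to a chain $f_1^j, \dots, f_{n_j}^j$ of $\C$ with $P f_i^j = \E_i^j$ gives $Q f_i^j = e_i^j$, and assembling produces an $F_\mu$ whose gradient at $0$ lists the $\Cn$-basis $\{e_i^j : \lambda_j = \mu\}$—manifestly full rank. The hardest step is the bookkeeping needed to justify $Q(\E_i^j) = e_i^j$ rather than the weaker $Q_j(\E_i^j) = e_i^j$ from Theorem \ref{LA: workhorse theorem}(1): one must confirm that the only basis modifications capable of moving $\mu$-chain vectors out of their original coordinate directions would come from length-gaining events inside the $\mu$-eigenspace, which the hypothesis excludes.
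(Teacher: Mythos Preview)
Your overall approach matches the paper's: the forward direction is essentially identical, and for the reverse direction both you and the paper pull the chains $\E_i^j$ out of Theorem~\ref{LA: workhorse theorem}, lift via Lemma~\ref{S: U, C chain correspondence}/Theorem~\ref{S: three equiv. Theorem}, and finish with Lemma~\ref{LA: chain lin. ind lemma}. The only real difference is that the paper argues with the weaker identity $Q_j(\E_i^j)=e_i^j$ from Theorem~\ref{LA: workhorse theorem}(1) and then applies Lemma~\ref{LA: chain lin. ind lemma} to the induced chains $\{Qf_i^j\}_i$ of $\p$, whereas you go for the stronger claim $Q(\E_i^j)=e_i^j$.

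Your bookkeeping for that stronger claim has a gap. It is not true that ``every modification made to a $\mu$-chain comes from Lemma~\ref{LA: one J block lemma} in its non-length-gaining case'': at a step where the newly appended diagonal entry equals $\mu$, an original $\mu$-chain can have a nonzero entry in the new row and therefore be fed into Lemma~\ref{LA: 2 J blocks lemma} without itself gaining length (a longer $\mu$-chain gains instead). What rescues the conclusion is that, under the hypothesis $d_j=n_j$ for every original $\mu$-block, the chain that actually gains length at such a step must be a \emph{newly created} chain, and an easy induction shows that newly created chains live in $\langle e_{n+1},\dots,e_N\rangle$ throughout the construction. Consequently the Lemma~\ref{LA: 2 J blocks lemma} correction to an original $\mu$-chain subtracts only vectors killed by $Q$, and $Q(\E_i^j)=e_i^j$ survives. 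With this observation added your argument is complete; in fact it supplies exactly the linear independence of the eigenvectors $Qf_{n_j}^j$ that the paper's invocation of Lemma~\ref{LA: chain lin. ind lemma} needs but does not spell out.
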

\begin{proof}
  With no loss of generality, let $\mu= \lambda_1= ... = \lambda_s$ and assume $\lambda_j\neq \mu $ for any $s<j\leq m$.  Thus, the generalized eigenspace of $\p$ with eigenvalue $\mu$ is $ T:=T_{\mu}=S_1\oplus ... \oplus S_{s},$ and  as before we have the chain $e_1^j, ... , e_{n_j}^j$ forming a basis of $S_j$  with $n_j=\dim(S_j)$.  
Recall that since $Q$ is the projection from $\h$ to $\langle z_1, ... , z_n\rangle$,   $Qg=\nabla g(0)$ for any $g$ in $\h$.  So, 
$$F_\mu=[F_1, ..., F_{m_0}]^t=[(f_1^1, ... , f_{n_1}^1), (f_2^2, ... , f_{n_2}^2), ... , (f_1^s, ... , f_{n_s}^{s})]^t$$ 
has full rank near 0 if and only if $\{Qf_i^j : 1\leq j \leq s, 1\leq i \leq n_j \}$ is linearly independent.  

Suppose $Q(\ker(U-\mu I_N))=\ker(\p-\mu I_n)$.  There exist Jordan decompositions $\Cn=\oplus_{j+1}^nS_j$ and $\CN=\oplus_{j=1}^l\tilde S_j$ for $\p$ and $U$ respectively, satisfying the result of Theorem \ref{LA: workhorse theorem}. That is, for each $1\leq  j \leq s$ the chains $e_1^j, ..., e_{n_j}^j$ and $\E_1^j, ..., \E_{n_j}^j$ are bases for $S_j$ and $\tilde S_j$ respectively, and they satisfy $Q_j\E_i^j=e_i^j$.  Now Theorem \ref{S: three equiv. Theorem} gives unique $F_j=(f_1^j, ... , f_{n_j}^j)^t$ satisfying both $F_j\circ \phi=J_j^tF_j$ and $P(f_i^j)=\E_{i}^j$.  Since $Q_j(f_i^j)=Q_j(\E_i^j)=e_i^j$, we have $Q(f_i^j)\neq 0$.  It follows that $Q(f_1^j), ... , Q(f_{n_j}^j)$ is a chain of $\p$ for each $j$.  Lemma \ref{LA: chain lin. ind lemma} shows that  $\{Qf_i^j : 1\leq j \leq s, 1\leq i \leq n_j \}$ is a linearly independent set, which is equivalent to $F_\mu$ having full rank.  

Conversely,  suppose that $F_\mu$ is a full rank solution.  This implies that $\ker(\p-\mu I_n)=\langle Qf_{n_j}^j : 1\leq j \leq s \rangle$.  Since $Pf_{n_j}^j$ with $1\leq j \leq s$ are eigenvectors of $U$, we have the result.  
\end{proof}
Section \ref{Sec:SA} now allows us to formulate the result under the general hypotheses, that is, for $\phi$  an analytic self-map of $\Bn$, fixing 0, not unitary on any slice, and with full rank near $0$.  The theorem above applies to $D\phi D^{-1}$, and 
$$C_{D\phi D^{-1}}=\left[\begin{matrix}
U&0\\V&W
\end{matrix}\right],$$
 where $D$ is chosen so that $(D\phi D^{-1})'(0)$ is an upper triangular Jordan form matrix.  So, to state the theorem in terms of $\phi$ we just need to translate ``$Q[(\ker(U-\mu I_N)]=\ker[(D\phi'(0)^tD^{-1})^t-\mu I_n]$" to the language of $\C.$   Recall that $\C$ is compact on $A_G^2(\Bn)$, and $C_{D\phi D^{-1}}$ is compact on $A_H^2(D\Bn)$, where $\iota(f)=f\circ D^{-1}$ is an isometric isomorphism from $A_G^2(\Bn)$ to $A_H^2(D\Bn)$.  Write
$$C_{\phi}=\left[\begin{matrix}
\tilde U&0\\ \tilde V&\tilde W
\end{matrix}\right],$$ 
in block form acting on 
 $A_G^2(\Bn)=\tilde H_1\oplus \tilde H_2$, with $\tilde H_1=\langle z^\alpha\in A_G^2(\Bn) : |\alpha|\leq K\rangle$ and $K$ large enough that $\tilde W $ shares no eigenvalue with $\p$.  (It should be noted that the general $\C$ is block lower triangular, as evidenced above.  To convince oneself of this, notice that $\phi^\alpha$ cannot have a nonzero derivative of order less than $|\alpha|$.)  Since $\iota $ preserves homogeneous polynomials, we see that $\iota (\tilde H_1)=H_1,$ and  $\iota (\tilde H_2)=H_2$.  Consequently, $\tilde U=\iota^{-1} U\iota$, and $Q(\ker(U-\mu I_N))=\ker((D\phi'(0)^tD^{-1})^t-\mu I_n)$ is equivalent to $\tilde Q(\ker(\tilde U -\mu I_N))=\ker(\p-\mu I_n)$, where $\tilde Q$ is the orthogonal projection from $A_G^2(\Bn)$ to $\langle z_1, ... , z_n\rangle$.  Thus a full rank solution, $F_\mu$ exists if and only if  $\tilde Q(\ker(\tilde U -\mu I_N))=\ker(\p-\mu I_n)$.  Since in general, 
 $$U=\left[ \begin{matrix} \phi'(0)^t & 0 \\ Y & Z \end{matrix}\right],$$
  $\tilde Q(\ker(\tilde U -\mu I_N))\subseteq \ker(\p-\mu I_n)$, so we can just count the dimensions to see if equality holds.   
\begin{theorem}[Main Theorem]
\label{C: main theorem part 2}
Let $\phi$ be an analytic self-map of $\Bn$, fixing 0, not unitary on a slice, and with $\phi'(0)$ full rank. We know that $\C$ is compact on the Bergman space $A_G^2(\Bn),$ for appropriate $G$.  Let $\tilde Q$ be the orthogonal projection of  $A_G^2(\Bn)$ to $\langle z_1, ... , z_n\rangle$.  Fix $\mu \in \sigma(\phi'(0)),$ the spectrum of $\phi'(0)$, and let 
$$C=\left[ \begin{matrix} \tilde U & 0 \\ \tilde V & \tilde W \end{matrix} \right] $$
with $\tilde U$ the compression of $\C$ to $\langle z^\alpha: |\alpha|\leq K\rangle $, where $K$ is large enough that $\mu \notin \sigma(\tilde W).$   Now, there is an analytic $F_\mu$ with full rank near 0 satisfying Equation (\ref{C: e1}) if and only if $\tilde Q(\ker(\tilde U -\mu I_N))=\ker(\p-\mu I_n)$, if and only if  $\dim[\tilde Q(\ker(\tilde U -\mu I_N))]=\dim[\ker(\p-\mu I_n)]$.  Finally, there is a full rank solution $F$ satisfying $F\circ \phi=\phi'(0) F$ if and only if $\dim[\tilde Q(\ker(\tilde U -\mu I_N))]=\dim[\ker(\p-\mu I_n)]$ for each eigenvalue $\mu$ of $\phi'(0)$.
\end{theorem}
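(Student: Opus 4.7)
The plan is to invoke Theorem \ref{C: main theorem 1}, which already gives the analogous criterion in the setting where $\phi'(0)$ is in Jordan form and the domain is an ellipsoid, and then transport that criterion back to the ball via the machinery of Section \ref{Sec:SA}. First, choose an invertible $D$ so that $D\phi'(0)D^{-1}$ is in upper triangular Jordan form. By Lemma \ref{SA: l1}, a full rank solution $F$ for $\C$ exists if and only if $DFD^{-1}$ is a full rank Schroeder solution for $C_{D\phi D^{-1}}$ on the ellipsoid $D\Bn$. Proposition \ref{SA: p1} together with Theorem \ref{T: CM cpt} ensures $C_{D\phi D^{-1}}$ is compact on $A_H^2(D\Bn)$, so the hypotheses of Theorem \ref{C: main theorem 1} are satisfied, and the criterion $Q(\ker(U-\mu I_N)) = \ker((D\phi'(0)D^{-1})^t - \mu I_n)$ characterizes the existence of a full rank $F_\mu$ in that Jordan-form setting.

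The second step is to transfer both the compressed operator and the projection through the isomorphism $\iota(f) = f \circ D^{-1}$. Because $\iota$ carries homogeneous polynomials of degree $k$ to homogeneous polynomials of degree $k$ (Proposition \ref{SA: P2}), it identifies $\tilde H_1$ with $H_1$, hence intertwines $\tilde U$ with $U$ and the linear piece $\p$ with $(D\phi'(0)D^{-1})^t$. Consequently $\iota$ sends $\ker(\tilde U - \mu I_N)$ bijectively onto $\ker(U - \mu I_N)$ and $\ker(\p - \mu I_n)$ bijectively onto $\ker((D\phi'(0)D^{-1})^t - \mu I_n)$, and it commutes with the projection onto the one-homogeneous part, so that $\iota \circ \tilde Q = Q \circ \iota$ on the relevant subspace. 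This turns the ellipsoid criterion into $\tilde Q(\ker(\tilde U - \mu I_N)) = \ker(\p - \mu I_n)$, as desired.

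Third, I upgrade the equality criterion to a dimension criterion. The general block structure of the composition operator forces $\tilde U = \left[\begin{matrix} \phi'(0)^t & 0 \\ Y & Z \end{matrix}\right]$, so any eigenvector of $\tilde U$ with eigenvalue $\mu$, when projected to its first $n$ coordinates by $\tilde Q$, is automatically annihilated by $\phi'(0)^t - \mu I_n$. Thus the inclusion $\tilde Q(\ker(\tilde U - \mu I_N)) \subseteq \ker(\p - \mu I_n)$ holds for free, and equality of these two finite-dimensional subspaces is equivalent to equality of their dimensions.

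Finally, to pass from the per-eigenvalue statement to the full rank solution $F$, I invoke Proposition \ref{C: p1}: decomposing $F = (F_{\mu_j})^t$ over the distinct eigenvalues of $\phi'(0)$ reduces $F \circ \phi = \phi'(0)F$ to the uncoupled system $F_{\mu_j} \circ \phi = L_j^t F_{\mu_j}$, and reduces full-rankness of $F$ to simultaneous full-rankness of each block, since the generalized eigenspaces of $\C$ for distinct eigenvalues are linearly independent. Consequently, a full rank $F$ exists precisely when the per-eigenvalue dimension equality holds for every $\mu \in \sigma(\phi'(0))$. The only delicate step is the second one: carefully verifying that $\iota$ intertwines all three of $\tilde U$, $\tilde Q$, and $\p$ with their Jordan-form counterparts in a compatible way, so that the ellipsoid criterion transports verbatim. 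Once this bookkeeping is discharged the theorem follows immediately from Theorem \ref{C: main theorem 1}.
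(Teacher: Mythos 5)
Your proposal is correct and follows essentially the same route as the paper: reduce to the Jordan-form/ellipsoid case, apply Theorem \ref{C: main theorem 1}, transport the criterion back through the isomorphism $\iota(f)=f\circ D^{-1}$, note the automatic inclusion $\tilde Q(\ker(\tilde U-\mu I_N))\subseteq\ker(\p-\mu I_n)$ coming from the block lower-triangular structure to upgrade equality of subspaces to equality of dimensions, and finish by invoking Proposition \ref{C: p1} to assemble the per-eigenvalue criteria into the global full-rank condition. The ``delicate step'' you flag (that $\iota$ preserves homogeneous degrees and hence simultaneously intertwines $\tilde U$ with $U$, $\tilde Q$ with $Q$, and $\p$ with $(D\phi'(0)D^{-1})^t$) is exactly the observation the paper makes, resting on Proposition \ref{SA: P2}, so nothing is missing.
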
 

As is the practice in \cite{CM03}, it is possible to state this main result without appealing to the underlying Hilbert space or even compact operators, and perhaps a reader with less interest in the operator theory involved and more interest on the functionality of the theorem will appreciate the formulation provided below. Furthermore, we point the interested reader to compare and contrast Theorem \ref{main theorem revisited} with \cite[Theorem 14]{CM03}. In both theorems it is possibly easiest to consider $\C$ only as a large (infinite) matrix. 

\begin{theorem}[Main Theorem Revisited]
\label{main theorem revisited}
Let $\phi$ be an analytic self-map of $\Bn$, fixing 0, not unitary on a slice, and with $\phi'(0)$ full rank.  Write $\sigma(\phi'(0))=\{\lambda_1, ..., \lambda_m\}$. 
Let $N=\max\{\sum k_i: \lambda_1^{k_1}  \cdot \cdot \cdot \lambda_m^{k_m}\in \sigma(\phi'(0)) \}$  and write 
$$\C=\left[\begin{matrix} U & 0 \\ V & W \end{matrix} \right]$$ 
with $U$ the upper left $N\times N$ corner of $\C$.   Let $Q:\CN \to \Cn$ be the projection to the first $n$ components.   A full rank solution to Schroeder's equation exists if and only if $\dim[\ker(Q(U-\lambda_j I_N))]=\dim[\ker(\phi'(0)^t-\lambda_j I_n)]$ for every $j=1,...,m.$  
\end{theorem}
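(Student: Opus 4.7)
The plan is to deduce Theorem \ref{main theorem revisited} from Theorem \ref{C: main theorem part 2}, which has already been proven. The two statements express essentially the same content; the main work is unwinding the reformulation and checking that the choice of $N$ given here really does satisfy the hypotheses of the earlier version.

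First I would reduce to the Jordan-form case using Section \ref{Sec:SA}. Replacing $\phi$ with $D\phi D^{-1}$ leaves $\sigma(\phi'(0))$ and the kernel dimensions $\dim \ker(\p - \lambda_j I)$ unchanged; moreover, because the isometric isomorphism $\iota$ preserves each subspace of homogeneous polynomials of a given degree (Proposition \ref{SA: P2}), it also preserves the subspace $Q(\ker(U-\lambda_j I))$ of $\langle z_1, \ldots, z_n\rangle$. We may therefore assume $\phi'(0)$ is upper triangular in Jordan form and $\C$ is lower triangular.

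Next I would verify that the choice $N=\max\{\sum k_i:\lambda_1^{k_1}\cdots\lambda_m^{k_m}\in \sigma(\phi'(0))\}$ places us in the hypothesis of Theorem \ref{C: main theorem part 2}. Taking $U$ as the compression of $\C$ to polynomials of degree at most $N$, the complementary block $W$ is lower triangular with diagonal entries $\lambda^\alpha$ for $|\alpha|>N$; by the very definition of $N$ no such $\lambda^\alpha$ lies in $\sigma(\phi'(0))$, so $\sigma(W)\cap \sigma(\phi'(0))=\emptyset$. Theorem \ref{C: main theorem part 2} then gives that a full rank $F$ exists if and only if $Q(\ker(U-\lambda_j I))=\ker(\p-\lambda_j I)$ for every $\lambda_j$.

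Finally I would convert the set equality into the dimension equality stated in the revised theorem using the one-sided containment $Q(\ker(U - \lambda_j I)) \subseteq \ker(\p - \lambda_j I)$. Because $\C$ is block lower-triangular with blocks indexed by homogeneous degree, the upper-left $n\times n$ corner of $U$ is $\p$, and any vector $v=(v_1,v_2)\in \ker(U-\lambda_j I)$ must satisfy $(\p-\lambda_j I)v_1=0$; hence $Qv=v_1\in \ker(\p-\lambda_j I)$. With this containment in hand, equality as sets is equivalent to equality of dimensions, which yields the stated criterion.

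I expect the principal obstacle to be not a deep mathematical difficulty but careful bookkeeping in the Jordan-form reduction: verifying that each object appearing in the revised theorem (most delicately the projection $Q$ onto $\langle z_1,\ldots,z_n\rangle$ and the finite truncation $U$) behaves as expected under the change of variables by $D$. This amounts to tracking how $\iota$ interacts with the degree-by-degree decomposition of $\A$, which was already set up in Section \ref{Sec:SA}, so in the end the argument is essentially a translation between two equivalent languages rather than a genuinely new proof.
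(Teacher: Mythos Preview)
Your proposal is correct and follows essentially the same approach as the paper: both deduce the result from Theorem \ref{C: main theorem part 2} by checking that the specified $N$ forces $\sigma(W)\cap\sigma(\phi'(0))=\emptyset$, and both note that the containment $Q(\ker(U-\lambda_j I))\subseteq\ker(\p-\lambda_j I)$ turns the set equality into a dimension count. The one minor difference is that the paper triangularizes $\phi'(0)$ with a \emph{unitary} $D$ (so $D\Bn=\Bn$ and $C_D$ is unitary on $A_G^2(\Bn)$, making the similarity of the $W$-blocks immediate), whereas you invoke the full Jordan-form reduction of Section \ref{Sec:SA} via a general invertible $D$ and the isomorphism $\iota$; both routes work and the bookkeeping you flag is exactly what is needed.
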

\begin{proof}
It suffices to show that $\sigma(\phi'(0))\cap \sigma(W)=\emptyset$, as the rest follows from \ref{C: main theorem part 2}.
Choose $D$ to be a unitary $n\times n$ matrix so that $D\phi'(0)D^{-1}$ is upper triangular.  
Note that $D\Bn=\Bn$  since $D$ is a unitary.  
We know $C_{D\phi D^{-1}}$ is lower triangular and compact on $A_G^2(\Bn)$, and that $C_D$ is a unitary on $A_G^2(\Bn)$.   
Now by the choice of $N$ we see that no $\lambda_j$ can appear on the diagonal of $C_{D\phi D^{-1}}$ below the $N^{th}$ row.  
Since $C_{D\phi D^{-1}}=C_{D}^{-1}\C C_{D}$ it follows that $\sigma(\phi'(0))\cap \sigma(W)=\emptyset$.
\end{proof}

While Jordan form of both $\p$ and $U$ played a large role in understanding and proving the our main results, none of the statements of these results rely on any special matrix form.  The main theorem of the section does not rely on any Jordan form, but is instead formulated in terms of invariant subspaces of our operator.  This is important for practicality of the theorem; that is, given a specific $\phi$, one can theoretically obtain the result of the theorem without first putting $\p$ into Jordan form.  

The following are a few results that can expedite our understanding of when a full rank solutions exists. For the remainder of the section, let $\C$ and $\phi$ be as in the hypotheses of Theorem \ref{C: main theorem part 2}.  Theorem \ref{C: t1} is a small generalization of Theorem \ref{absence of resonance}.     
\begin{theorem}
\label{C: t1}
If $\mu$ is not a resonant eigenvalue, then there exists an $F_{\mu}$ satisfying Equation (\ref{C: e1}) with full rank near 0.  
\end{theorem}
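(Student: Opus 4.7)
The plan is to apply the criterion of Theorem \ref{C: main theorem 1}: we must show $Q(\ker(U-\mu I_N)) = \ker(\p - \mu I_n)$. By the reduction of Section \ref{Sec:SA}, which preserves the eigenvalues of $\phi'(0)$ and hence the resonance condition on $\mu$, we may assume $\phi$ is defined on an ellipsoid $\D$ with $\phi'(0)$ an upper-triangular Jordan form matrix and $\C$ compact on $A_H^2(\D)$.

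The key observation is combinatorial. The diagonal entries of $\C$ below the upper-left $\p$ block are, as noted in Section \ref{Sec: N}, precisely the products $\lambda_1^{\alpha_1}\cdots \lambda_m^{\alpha_m}$ with $|\alpha|>1$. Since $\mu$ is not resonant, none of these entries equals $\mu$; hence the algebraic multiplicity of $\mu$ in $U$ (the number of times it appears on the diagonal, as $U$ is lower triangular) coincides with its algebraic multiplicity $n_\mu$ in $\p$.

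Let $\Cn=\bigoplus_{j=1}^m S_j$ and $\CN=\bigoplus_{k=1}^l \tilde S_k$ be Jordan subspace decompositions of $\p$ and $U$. The injection $\tau$ from Corollary \ref{LA: decomp corollary} preserves eigenvalues and satisfies $\dim S_j \leq \dim \tau(S_j)$. Summing over indices with $\lambda_j = \mu$,
\[
n_\mu \;=\; \sum_{\lambda_j=\mu} \dim S_j \;\leq\; \sum_{\lambda_j=\mu} \dim \tau(S_j) \;\leq\; n_\mu,
\]
since the middle sum is bounded above by the total algebraic multiplicity of $\mu$ in $U$, which is $n_\mu$ by the observation above. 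Thus equality holds throughout and $\dim S_j = \dim \tau(S_j)$ for each $j$ with $\lambda_j = \mu$; in other words, no $\mu$-Jordan block of $\p$ grows in passing to $U$, and no new $\mu$-block appears in $U$ outside the image of $\tau$. Theorem \ref{LA: workhorse theorem}(3) applied to the eigenvalue $\mu$ then yields $Q(\ker(U - \mu I_N)) = \ker(\p - \mu I_n)$, and Theorem \ref{C: main theorem 1} furnishes the desired full-rank solution $F_\mu$.

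The heart of the argument is simply the observation that non-resonance pins the algebraic multiplicity of $\mu$ in $U$ to its value in $\p$; once this is established, the inequalities in the dimension count from Corollary \ref{LA: decomp corollary} must saturate and the machinery of Section \ref{Sec. Sw/oCRN0} delivers the conclusion. No substantial obstacle is anticipated.
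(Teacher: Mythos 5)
Your proof is correct, but it travels a different path from the paper's. The paper argues "from the inside": since $\mu$ does not appear on the diagonal of $\C$ below $\p$, the inductive proof of Theorem~\ref{LA: J block theorem} never invokes Lemma~\ref{LA: 2 J blocks lemma} for the eigenvalue $\mu$ (only Lemma~\ref{LA: one J block lemma}), so no $\mu$-Jordan block of $\p$ can gain length at any step; then Theorem~\ref{LA: workhorse theorem}(3) and the main theorem finish. You instead argue "from the outside" with a clean multiplicity count: non-resonance pins the algebraic multiplicity of $\mu$ in $U$ to $n_\mu$, and combining this with the stated properties of the injection $\tau$ from Corollary~\ref{LA: decomp corollary} (eigenvalue preservation, $\dim S_j \leq \dim \tau(S_j)$, injectivity) forces all inequalities to saturate, so $d_j = n_j$ for every $j$ with $\lambda_j = \mu$. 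This applies to any $\tau$ satisfying the corollary, including the one constructed in Theorem~\ref{LA: workhorse theorem}, so its part (3) again finishes. Your version has the advantage of invoking only the \emph{statements} of the linear-algebra results rather than details of their proofs, at the modest cost of an extra (but elementary) dimension-counting step; the paper's version is shorter once one is willing to reason inside the induction of Theorem~\ref{LA: J block theorem}.
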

\begin{proof}
Without loss of generality, we assume $\phi'(0)$ is an upper triangular Jordan form matrix.  
Since  $\mu$ does not occur on the the diagonal of $\C$ below $\p$, it follows from the proof of Theorem \ref{LA: J block theorem} that no Jordan block of $\p$, $J_j$, can gain length on any induction step of the proof.  By Theorem \ref{LA: workhorse theorem} we see $Q(\ker(U-\mu I_N))=Q(\p-\mu I_n)$, so the result follows from our main theorem.  
\end{proof}

\begin{theorem}
\label{C: counting argument}
If $\mu$ is a resonant eigenvalue, and $\dim(\ker(U-\mu I_N))=\dim(\ker(\p-\mu I_n))$, then there is not a full rank solution $F_\mu$.   
\end{theorem}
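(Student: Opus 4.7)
The plan is to invoke Theorem~\ref{C: main theorem 1} together with the linear algebra of Subsection~\ref{Sub Sec: LA}. Since that theorem asserts a full rank $F_\mu$ exists iff $Q(\ker(U-\mu I_N))=\ker(\p-\mu I_n)$, and since the inclusion $Q(\ker(U-\mu I_N))\subseteq\ker(\p-\mu I_n)$ is automatic from the block form $U=\left[\begin{matrix}\p & 0\\ Y & Z\end{matrix}\right]$, the entire task reduces to proving the inclusion is strict under the hypotheses.

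To convert ``strict inclusion'' into a statement about Jordan blocks, I would appeal to part~(3) of Theorem~\ref{LA: workhorse theorem}: the strict inclusion $Q(\ker(U-\mu I_N))\subsetneq\ker(\p-\mu I_n)$ is equivalent to the existence of some Jordan subspace $S_j$ of $\p$ with eigenvalue $\mu$ whose image $\tilde S_j$ under the injection $\tau$ of Corollary~\ref{LA: decomp corollary} satisfies $d_j>n_j$. Thus my remaining target is to produce such a block.

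For this I would run a multiplicity count. Because $\p$ is already in Jordan form, its algebraic multiplicity for $\mu$, call it $a_\p$, equals the number of times $\mu$ appears on the diagonal of $\p$. Resonance of $\mu$ means $\mu$ also appears on the diagonal of $\C$ at some position $z^\alpha$ with $|\alpha|>1$, and the standing choice of $U$ (large enough that $\mu\notin\sigma(W)$) guarantees every such occurrence lies inside $U$. Hence the algebraic multiplicity of $\mu$ in $U$ satisfies $a_U\geq a_\p+1$. The hypothesis $\dim\ker(U-\mu I_N)=\dim\ker(\p-\mu I_n)$ says that $U$ and $\p$ carry the same number of Jordan blocks for $\mu$; by Theorem~\ref{LA: J block theorem} and Corollary~\ref{LA: decomp corollary}, the injection $\tau$ sends the $\mu$-blocks of $\p$ bijectively onto the $\mu$-blocks of $U$ without decreasing length. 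Summing lengths, the $U$-side totals $a_U>a_\p$, so pigeonhole forces at least one such block to grow strictly, i.e.\ $d_j>n_j$ for some $j$ with $\lambda_j=\mu$.

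I expect the main obstacle to be precisely this bookkeeping step: ensuring that the geometric datum (``equal number of $\mu$-blocks'') combines correctly with the algebraic datum (``strictly more $\mu$'s on the diagonal of $U$''), so that pigeonhole is applied to the $\mu$-blocks alone and not to the whole family $\{S_j\}$. Once $d_j>n_j$ is in hand, part~(3) of the workhorse theorem yields strict inclusion of the projections, and Theorem~\ref{C: main theorem 1} then delivers the nonexistence of a full rank $F_\mu$.
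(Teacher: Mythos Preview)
Your proposal is correct and follows essentially the same route as the paper: both reduce to Theorem~\ref{C: main theorem 1} via part~(3) of Theorem~\ref{LA: workhorse theorem}, and both establish that some $\mu$-block of $\p$ must gain length in $U$. The paper phrases this last step by appealing to the inductive proof of Theorem~\ref{LA: J block theorem}, whereas you make the counting explicit (equal geometric multiplicity forces $\tau$ to be a bijection on $\mu$-blocks, and strictly larger algebraic multiplicity in $U$ then forces growth by pigeonhole); the content is the same.
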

The suggestion of using a counting argument was proposed by Carl C. Cowen, for which the author is thankful.  
\begin{proof}
Without loss of generality, we assume $\phi'(0)$ is an upper triangular Jordan form matrix.  
Since $\mu$ appears on the diagonal of $U$ below  $\p$ and $\dim(\ker(U-\mu I_N))=\dim(\ker(\p-\mu I_n))$, it follows from the proof of Theorem \ref{LA: J block theorem} that an original Jordan block gains length at some step of the induction.  By Theorem \ref{LA: workhorse theorem} we see $Q(\ker(U-\mu I_N))\subsetneq\ker(\p-\mu I_n)$, and the result follows from Theorem \ref{C: main theorem 1}.
\end{proof}


\begin{example}
\label{C: e 1}
Let $\phi(z)=(\frac{z_1}{2}, \frac{z_2}{4}+\frac{z_3}{8}+\frac{z_1^2}{8}, \frac{z_3}{4}, \frac{z_4}{8})$ on $\mathbb{B}^4$.   Clearly $|\phi(z)|^2 \leq \frac{|z_1|^2}{4}+ \frac{9|z_2|^2}{16}+ \frac{9|z_3|^2}{64}+ \frac{9|z_1|^4}{64} +\frac{|z_3|^2}{16}+\frac{|z_4|^2}{64}\leq \frac{41|z|^2}{64}$ so $\phi$ is not unitary on any slice, and fixes 0.   Now we have
$$\p=\left[ \begin{matrix} 
1/2 &0&0&0\\
0& 1/4&0&0 \\
0& 1/8 &1/4&0\\
0&0&0&1/8
\end{matrix}\right]$$ 
and 
$$\C=\begin{array}{cc} 
& \begin{matrix}
\phi_1 & \phantom{,} \phi_2 \phantom{,} & \phi_3 \phantom{.,} & \phi_4 \phantom{,}&\phantom{.} \phi_1^2 & \phi_1\phi_2  & \phi_1\phi_3 &\dots
\end{matrix}\\ 
\begin{matrix}
z_1\\ z_2\\ z_3\\z_4\\z_1^2 \\ z_1z_2\\z_1z_3 \\ \vdots
\end{matrix}
&\left[\begin{matrix}
1/2 & 0 	& 0		&0		&0		&0		&0&\dots\\
0 	& 1/4	&0	 	&0		&0		&0		&0&\dots \\
0 	& 1/8	& 1/4  	& 	0	&0		&0		&0&\dots	\\
0 	&0		&0	 	& 1/8	&  0	&0		&0&\dots	\\
0	&1/8	&0		&0		&	1/4	&	0	&0&\dots\\
0	&0		&0		&0		&0		&	1/8	&0&\dots\\
0	&0		&0		&0		&0		& 1/16		&1/16&\dots\\
\vdots&\vdots&\vdots& \vdots&\vdots	&\vdots	 &\vdots	&\ddots
\end{matrix} \right].
\end{array}$$

By inspection we see that $F_{1/2}=z_1$, and $F_{1/8}=z_4$.  So the existence of a full rank solution rests on having a full rank $F_{1/4}$.  $U$  can be chosen to be the upper left $6 \times 6$, as all diagonal values below this are less than or equal to 1/16.  Since $\ker(\p-I_4/4)=\{e_3\}\subset \{e_3, e_5\}=\ker(U-I_6/4 )$, we know a full rank solution exists.  Now, to find this solution notice that when in Jordan form, $\p$  has only one (1/4)-Jordan block of length 2, and $e_2, e_3/8$ gives the chain basis of this block. This implies that $F_{1/4}=(f_1, f_2)$ with $f_1, f_2$ a chain of $\C$ that is determined by the corresponding chain of $U$ (see Theorem \ref{S: three equiv. Theorem}).  Since $(U-I_6/4)e_2=e_3/8+e_5/8$, we have $F_{1/4}=(z_2, z_3/8+z_1^2/8)^t$.  In total $F=(z_1, z_2, z_3/8+z_1^2/8, z_4).$

\end{example}

\section{Solutions to $C_\phi F = \phi(0)^k F$}
\label{Section: generalized solutions}
It is well known in one variable that if $\phi:\mathbb{D}\to\mathbb{D}$ with $\phi(0)=0$ and $0<|\phi'(0)|<1$ then $\phi'(0)^k$ are eigenvectors of $\C$ for each non-negative integer $k$.  Moreover, given $f\neq0$ so that $\C f=\phi'(0)f$, it is easy to see that $\C f^k=\phi'(0)^k f$ for $k\geq 1$.  Whether or not this phenomenon generalizes to several variables is evidently a natural question to ask in the current circumstances, as the author was asked such a question by \^{Z}eljko \^{C}u\^{c}kovi\'{c} after a presentation at the 27th Southeast Analysis Meetings, University of Florida, Gainesville, March 17-19, 2011.  The author thanks Professor \^{C}u\^{c}kovi\'{c} for the insightful inquiry, and has included the current section as a belated answer.    
\begin{lemma}
\label{ $J^k$ lemma}
Let $J$ be a lower triangular $\lambda$-Jordan block with basis vectors $e_1,... , e_s$.  Then for any positive integer $k$
 there is a basis $\E_1, ... , \E_s$ so that $\E_s=e_s$, and when expressed in this basis $J^k$ is a lower triangular $\lambda^k$-Jordan block.   
\end{lemma}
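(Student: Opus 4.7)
The plan is to write $J = \lambda I + N$ where $N := J - \lambda I$ is the nilpotent shift satisfying $Ne_j = e_{j+1}$ for $j<s$ and $Ne_s = 0$; in particular $N^{s-1}e_1 = e_s$ and $N^s = 0$. First I would expand by the binomial theorem and factor $N$ on the left, obtaining
\[
J^k - \lambda^k I \;=\; \sum_{i=1}^{k}\binom{k}{i}\lambda^{k-i}N^i \;=\; N\,P(N),
\]
where $P(N) := k\lambda^{k-1}I + \sum_{i=2}^{k}\binom{k}{i}\lambda^{k-i}N^{i-1}$. Since the standing hypothesis that $\phi'(0)$ is invertible forces $\lambda \neq 0$, the constant term $k\lambda^{k-1}$ of $P(N)$ is nonzero, so $P(N)$ is a nonzero scalar multiple of $I$ plus a nilpotent, hence invertible.

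Setting $M := J^k - \lambda^k I = N\,P(N)$ and noting that $N$ and $P(N)$ commute, we get $M^j = N^j P(N)^j$ for every $j \geq 0$. Because each $P(N)^j$ is invertible, $M$ is nilpotent of the same index as $N$, namely exactly $s$. More sharply, any monomial in $P(N)^{s-1}$ containing a factor of $N^i$ with $i \geq 1$ pairs with $N^{s-1}$ on the left to give $N^{s-1+i} = 0$, so only the constant term of $P(N)^{s-1}$ survives and
\[
M^{s-1} \;=\; (k\lambda^{k-1})^{s-1}\,N^{s-1}.
\]

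Setting $c := (k\lambda^{k-1})^{s-1} \neq 0$, define $\E_1 := c^{-1}e_1$ and $\E_j := M^{j-1}\E_1$ for $2 \leq j \leq s$. By construction $M\E_j = \E_{j+1}$ for $1 \leq j \leq s-1$ and $M\E_s = M^s \E_1 = 0$, while
\[
\E_s \;=\; M^{s-1}\E_1 \;=\; c^{-1}\cdot c\cdot N^{s-1}e_1 \;=\; e_s,
\]
so the normalization of $\E_1$ has been chosen precisely to force this identity. Applying Lemma~\ref{LA: chain lin. ind lemma} to the single chain $\E_1, \dots, \E_s$ with nonzero eigenvector $\E_s = e_s$ then yields linear independence, and hence a basis in which $J^k$ acts as a lower triangular $\lambda^k$-Jordan block.

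The only real subtlety is pinning down the scalar $c = (k\lambda^{k-1})^{s-1}$ so that $\E_s$ equals $e_s$ exactly rather than merely a nonzero multiple; the rest is bookkeeping around the invertibility of $P(N)$, which is the key feature preventing $J^k$ from splitting into multiple blocks (as it would, for instance, if $\lambda = 0$).
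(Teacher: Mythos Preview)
Your proof is correct and takes a genuinely different route from the paper's. The paper proceeds by induction on $s$: it observes that $J^k$ is lower triangular with $\lambda^k$ on the diagonal and $k\lambda^{k-1}$ on the subdiagonal (the paper writes $k\lambda$, evidently a typo), applies the inductive hypothesis to the upper-left $(s-1)\times(s-1)$ corner to put that corner into Jordan form while fixing $e_{s-1}$, and then invokes Lemma~\ref{LA: one J block lemma} to absorb the final row. Your argument instead factors $J^k-\lambda^k I = N\,P(N)$ with $P(N)$ invertible and builds the chain $\E_j = M^{j-1}\E_1$ directly, computing the exact scalar $(k\lambda^{k-1})^{s-1}$ needed to force $\E_s = e_s$.

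Your approach is more self-contained and makes transparent \emph{why} $J^k$ stays a single block rather than splitting: it is precisely the invertibility of $P(N)$, i.e.\ the nonvanishing of $k\lambda^{k-1}$, that keeps the nilpotent index at $s$. The paper's approach has the virtue of reusing Lemma~\ref{LA: one J block lemma}, so it fits the existing machinery, but the dependence on $\lambda\neq 0$ is buried in the ``$a_k\neq 0$'' clause of that lemma. Both proofs require $\lambda\neq 0$, as you correctly note follows from the standing hypothesis that $\phi'(0)$ is invertible; the lemma is simply false otherwise. One tiny gap worth closing explicitly: your appeal to Lemma~\ref{LA: chain lin. ind lemma} requires each $\E_j$ to be nonzero (that is the definition of a chain), which follows since $\E_j=0$ for some $j<s$ would force $\E_s = M^{s-j}\E_j = 0$, contradicting $\E_s = e_s$.
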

\begin{proof}
Working in the original basis, $\{e_1, ... , e_s\}$ $J^k$ is the lower triangular matrix, 

$$J^k=\begin{matrix}
e_1 \\ e_2 \\ \vdots \\ e_s
\end{matrix}\left[ \begin{matrix}
\lambda^k   \\
k\lambda	& \lambda^k \\
 	*		& \ddots 	& \ddots \\
	*		&		*	& k\lambda	& \lambda^k
 \end{matrix}\right].$$
Seeing our result is trivial for $s=1$, we induct on $s$.  Applying our induction hypothesis to the upper left $(s-1) \times (s-1)$ we are furnished with a new basis, namely $\E_1, ... , \E_{s-2}, e_{s-1}, e_s$, and we have  
$$J^k=\begin{matrix}
\E_1 \\ \E_2 \\ \vdots \\ e_{s-1} \\ e_s
\end{matrix}\left[ \begin{matrix}
\lambda^k   \\
1	& \lambda^k \\
 	& \ddots 	& \ddots \\
	&			& 1			& \lambda^k\\
a_1	& \dots 	& a_{s-2} 	& k\lambda & \lambda^k 		
 \end{matrix}\right].$$
 An application of Lemma \ref{LA: one J block lemma} gives our result.  
\end{proof}

\begin{theorem}[Main theorem of Section \ref{Section: generalized solutions}]
\label{main generalized theorem}
Let $\phi$ be an analytic self-map of $\Bn$ so that $\phi(0)=0$, $\phi'(0)$ has full rank, and $\phi$ is not unitary on any slice.  Given a positive integer $k$, there is an analytic $F:\Bn \to \Cn$ with linearly independent component functions satisfying 
$$\C F = \phi'(0)^k F.$$
Furthermore, if $k>1$ then no such $F$ can have full rank near 0.    
\end{theorem}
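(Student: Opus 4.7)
The plan is to extend the methodology of Section \ref{Sec. Sw/oCRN0}, invoking Lemma \ref{ $J^k$ lemma} as the new ingredient that translates $\phi'(0)^k$ into Jordan form with eigenvalues $\lambda_j^k$. First, by Section \ref{Sec:SA} I reduce to the case $\phi'(0)=\mathrm{diag}(J_1^t,\ldots,J_m^t)$ upper triangular Jordan form, with $J_j^t$ a $\lambda_j$-block of length $n_j$. Applying Lemma \ref{ $J^k$ lemma} blockwise produces an invertible $P$ with $P^{-1}\phi'(0)^kP=\tilde{J}:=\mathrm{diag}(\tilde{J}_1^t,\ldots,\tilde{J}_m^t)$, where $\tilde{J}_j^t$ is a $\lambda_j^k$-block of length $n_j$. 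Since $\C$ acts componentwise and so commutes with any constant matrix, $\C F=\phi'(0)^kF$ is equivalent to $\C(P^{-1}F)=\tilde{J}(P^{-1}F)$. Consequently, without loss of generality one seeks $\tilde F$ whose components form chains of $\C$: one chain of length $n_j$ with eigenvalue $\lambda_j^k$ per Jordan block.

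The central task is producing such chains of $\C$. The compression of $\C$ to the degree-$k$ homogeneous polynomials is $\mathrm{Sym}^k(\phi'(0)^t)$, since the degree-$|\alpha|$ leading part of $\C z^{\alpha}=\phi^\alpha$ is $(\phi'(0)z)^\alpha$. The tensor decomposition
\[
\mathrm{Sym}^k(\phi'(0)^t)=\bigoplus_{|\alpha|=k}\bigotimes_i\mathrm{Sym}^{\alpha_i}(J_i^t)
\]
contains $\mathrm{Sym}^k(J_j^t)$ (from $\alpha=ke_j$) as an invariant summand. Using the chain basis $e_1^j,\ldots,e_{n_j}^j$ of $J_j^t$, a direct calculation inside $\mathrm{Sym}^k(J_j^t)$ shows that the vectors $e_i^j\cdot(e_{n_j}^j)^{k-1}$ for $i=1,\ldots,n_j$ form, after rescaling by powers of $\lambda_j^{k-1}$, a Jordan chain of length $n_j$ with eigenvalue $\lambda_j^k$ and eigenvector $(e_{n_j}^j)^k$. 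Because the summands $\mathrm{Sym}^k(J_j^t)$ for distinct $j$ are disjoint, their eigenvectors are linearly independent in $\mathrm{Sym}^k(\phi'(0)^t)$.

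To lift these chains to $\C$, I choose $K$ large enough that $\lambda_j^k$ does not appear on the diagonal of $\C$ past degree $K$ for any $j$ (possible since $\C$ is compact) and let $U$ be the compression of $\C$ to degrees $\leq K$. Then $U$ is block lower triangular with respect to the degree grading, with diagonal blocks $\mathrm{Sym}^i(\phi'(0)^t)$. Theorem \ref{LA: extra theorem}, applied after a block permutation that places $\mathrm{Sym}^k(\phi'(0)^t)$ as the upper-left block of a bipartition of $U$, extends each of the $m$ chains from the $\mathrm{Sym}^k(J_j^t)$ summands to a chain of $U$ of at least the same length lying in a distinct Jordan subspace of $U$. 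Lemma \ref{S: U, C chain correspondence} then uniquely lifts them to chains of $\C$. Truncating each to length $n_j$ and invoking Lemma \ref{LA: chain lin. ind lemma} produces $\tilde F=(\tilde F_1,\ldots,\tilde F_m)^t$ with $\tilde F_j$ a length-$n_j$ chain of eigenvalue $\lambda_j^k$ and all $n$ components of $\tilde F$ linearly independent in $\A$; setting $F=P\tilde F$ solves $\C F=\phi'(0)^kF$ with linearly independent components.

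For the second assertion, assume $k>1$ and that $F$ has full rank near $0$. Differentiating $F\circ\phi=\phi'(0)^kF$ at $0$ yields $F'(0)\phi'(0)=\phi'(0)^kF'(0)$, so the invertible $F'(0)$ intertwines $\phi'(0)$ with $\phi'(0)^k$; taking determinants gives $\det\phi'(0)=(\det\phi'(0))^k$, hence $|\det\phi'(0)|^{k-1}=1$. On the other hand, the hypothesis that $\phi$ is not unitary on any slice (equivalently, $|\phi(z)|<|z|$ for $0<|z|<1$), combined with the Schwarz lemma applied to $h(\zeta):=\langle\phi(\zeta v),v\rangle$ for any unit eigenvector $v$ of $\phi'(0)$, forces every eigenvalue of $\phi'(0)$ to have modulus strictly less than $1$, so $|\det\phi'(0)|=\prod_j|\lambda_j|<1$, contradicting $|\det\phi'(0)|^{k-1}=1$. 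The main obstacle of the proof is the chain computation inside $\mathrm{Sym}^k(J_j^t)$ and the bookkeeping needed to apply Theorem \ref{LA: extra theorem} in the degree-graded setting so that the lifts retain distinct Jordan subspaces of $U$; only then does Lemma \ref{LA: chain lin. ind lemma} guarantee linear independence of all the $\tilde F_j$'s components simultaneously.
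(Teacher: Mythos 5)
Your existence argument follows the same overall strategy as the paper: reduce $\phi'(0)$ to Jordan form, recast $\C F=\phi'(0)^kF$ as a request for chains of $\C$ of eigenvalue $\lambda_j^k$ and length $n_j$, and locate those chains inside the degree-$k$ homogeneous block of $\C$ before lifting via Lemma~\ref{S: U, C chain correspondence} and Theorem~\ref{LA: extra theorem}. The paper does this by an explicit computation of the coefficients of $\psi_1^k$ and $\psi_1^{k-1}\psi_j$ for $\psi=D\phi D^{-1}$, producing the chain supported on $z_1^k, z_1^{k-1}z_2,\dots, z_1^{k-1}z_s$; you instead identify the degree-$k$ compression with $\mathrm{Sym}^k(\phi'(0)^t)$ and exhibit the chain $e_i^j(e_{n_j}^j)^{k-1}$, $i=1,\dots,n_j$ (which is precisely the chain the paper later writes down in Theorem~\ref{How to find F }, but here used to prove existence rather than only to locate a solution after the fact). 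The $\mathrm{Sym}^k$ decomposition into tensor summands also makes the linear independence of the eigenvectors of the $m$ chains transparent, whereas the paper asserts it a bit more tersely. Both arguments share the same soft spot — the application of Theorem~\ref{LA: extra theorem} when the relevant block sits in the middle of the lower-triangular matrix rather than in the upper-left corner — and your remark about a ``block permutation'' papers over this in the same way the original does; the honest fix is to apply the theorem twice (or, equivalently, to restrict first to the invariant subspace spanned by degrees $\geq k$ and then to push from $\mathrm{Sym}^k$ up through that restriction), and it is worth saying so.

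For the non-existence of a full-rank solution when $k>1$, your argument is genuinely different and simpler. The paper differentiates to get $F'(0)\phi'(0)=\phi'(0)^kF'(0)$, selects an eigenvalue $\lambda$ of maximal modulus, and shows $F'(0)$ kills the corresponding eigenvector because $\lambda\notin\sigma(\phi'(0)^k)$. You take determinants of the same intertwining relation: if $F'(0)$ were invertible then $\det\phi'(0)=(\det\phi'(0))^k$ forces $|\det\phi'(0)|=1$, which contradicts the Schwarz-lemma fact that every eigenvalue of $\phi'(0)$ has modulus strictly less than $1$. This replaces a spectral extremal argument with a one-line determinant computation and reaches the same conclusion; the Schwarz lemma on slices (which you invoke to get $|\lambda_j|<1$) is also implicitly what justifies the paper's step $|\lambda|^k<|\lambda|$, so no new hypotheses are introduced. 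Both routes are correct; yours is more economical.
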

\begin{proof}
To realize the Jordan form of $\phi'(0)$, let $D$ be a nonsingular matrix so that $D\phi'(0)D^{-1}=$ diag$(J_1^t, ... , J_m^t)$ with each $J_j$ a lower triangular Jordan block (so $J_j^t$ are the upper triangular counterparts).  Say $J_j$ has eigenvalue $\lambda_j$ and length $n_j$ as usual.  
Similarly,  let $E$ be a nonsingular matrix so that $E\phi'(0)^kE^{-1}$ is an upper triangular Jordan form matrix. It follows from Lemma \ref{ $J^k$ lemma} that $E\phi'(0)^kE^{-1}$ is identical to $D\phi'(0)D^{-1} $ except the diagonal entries, $\lambda_j$,  are replaced with $\lambda_j^k$.  

Since  $(E\phi'(0)E^{-1})^k=E\phi'(0)^kE^{-1}$, $C_{E\phi E^{-1}} F= (E\phi'(0)E^{-1})^k F$ if and only if $\C (E^{-1}FE)=\phi'(0)^k (E^{-1}FE)$. Ergo, it suffices to find an $F$ which satisfies $C_{E\phi E^{-1}} F= (E\phi'(0)E^{-1})^k F$, and has linearly independent component functions.  Mimicking Lemma \ref{S: three equiv. lemma}, we see that such an $F$ exists if and only if for each $1\leq j\leq m$, $\C$ has a chain of length $n_j$ and eigenvalue $\lambda_j^k$ and that the chains form a linearly independent set.   

   Set $\psi=D\phi D^{-1}$.  It follows from the proof of Proposition \ref{SA: p1} that $\C=\iota^{-1}C_{\psi}\iota$ with $\iota$ an isometric-isomorphism.  Consequently, it suffices to show that the corresponding chains exist for $C_{\psi}$.  
  
   For notational simplicity, let $\lambda_1=\lambda$, and set $n_1=s$.  Now, since $\psi'(0)$ is in Jordan form, we see $\psi_j(z)=\lambda z_j + z_{j+1}+ O(2)$ for  $1\leq j<s$, and $\psi_s(z)=\lambda z_s +O(2)$.  We calculate, 
\begin{align*}
\psi_1^k&=[\lambda z_1 +z_2 +O(2)]^k\\
		&=\lambda^kz_1^k + k \lambda^{k-1}z_1^{k-1} z_2 +   \dots +   z_2^k + O(k+1) \\
\psi_1^{k-1}\psi_j&= [\lambda z_1 +z_2 +O(2)]^{k-1}[\lambda z_j+z_{j+1}+O(2)]\\
				  &= [\lambda^{k-1}z_1^{k-1} + (k-1) \lambda^{k-2}z_1^{k-2} z_2 + \dots + \\
				  &  z_2^{k-1} +O(k)][\lambda z_j+z_{j+1}+ O(2)] \\
				  &=\lambda^kz_1^{k-1}z_j+\lambda^{k-1}z_1^{k-1}z_{j+1}+ ... +z_2^{k-1}z_{j+1}+ O(k+1).
\end{align*}
 Hence,  $C_{\psi}$ contains the following sub-matrix along it's diagonal, 
$$  \begin{array}{cc} \phantom{a} &\phantom{[}\begin{matrix}  & \phi_1^k	& \phi_1^{k-1}\phi_2& \dots & \phi_1^{k-1}\phi_s \end{matrix}\phantom{]} \\
\begin{matrix}
z_1^k\\
z_1^{k-1}z_2\\
\vdots \\
z_1^{k-1}z_s
\end{matrix}&
					\left[ \begin{matrix}
					\phantom{0}\lambda^k \phantom{0} & \phantom{0} 0 \phantom{.}	& \phantom{0} 0 \phantom{0}	& 0\phantom{00}\\
					\phantom{0} \lambda^{k-1}\phantom{0}	& \phantom{0}\lambda^k	\phantom{.}	&\phantom{0} 0 \phantom{0}	& 0\phantom{00} \\
					\phantom{0}0\phantom{0}& \ddots					& \ddots			& 0\phantom{00}\\
					\phantom{0}0\phantom{0}	& \phantom{0}0\phantom{.}	&\phantom{0}\lambda^{k-1}\phantom{0}	& \lambda_1^k \phantom{00}  \\
\end{matrix} \right]
\end{array}.$$
Similarly, we have such a sub-matrix of $C_{\psi}$ for each $\lambda_j$ along the diagonal of $C_\psi$.  It follows from Theorem \ref{S: U, C chain correspondence} and Theorem \ref{LA: extra theorem}  that for each $1\leq j \leq m$, $C_\psi$ has a chain of length $n_j$ and eigenvalue $\lambda_j$, and that these chains are linearly independent.  

We have proven that $\C F=\phi'(0)^k F$ can be solved with an $F$ having linearly independent component functions.  It remains to show that any such $F$ must have $F'(0)$ singular if $k\geq 2$.  Let $k\geq 2$, $A=F'(0),$ and $B=\phi'(0)$.  Using the notation above we know the spectrum of $B$ is $\sigma(B)=\{\lambda_1, ... , \lambda_m\}$ and $\sigma(B^k)=\{ \lambda_1^k, ... , \lambda_m^k\}.$  With no loss of generality,  set $\lambda =\lambda_1,$ and assume $|\lambda|=\max \sigma(B)$. Differentiating $F\circ \phi=\phi'(0)^k F$ and evaluating at $z=0$ gives $AB=B^kA$. Hence,if we let $e$  be a nonzero member of $\ker(B-\lambda I_n)$ we obtain 
$$ B^k(Ae)=ABe=\lambda Ae.$$ 
Since $k\geq 2$,  $|\lambda| >|\lambda|^k=\max \sigma(B^k)$.   It follows that $\lambda $ is not within $\sigma(B)$, and thus, $Ae=0$, which concludes the proof.   
\end{proof}
The existence proof above gives little insight  on how to find such a solution in practice.  Driven by the single variable setting, in which $\C f= \phi'(0) f$ implies $\C f^k=\phi'(0)^k$ for any positive integer $k$, we have the following useful result.  
\begin{theorem}
\label{How to find F }
Suppose $F=(f_1, ... , f_s)^t$ has linearly independent component functions and satisfies $\C F= J^t F$ with 
$$J^t=\left[\begin{matrix}
\lambda & 1 \\
		& \ddots 	& \ddots \\
		&			&\lambda	& 1\\
		&			&			&\lambda
 \end{matrix}\right]$$ 
 an upper triangular $s\times s$ Jordan block, and $\lambda \neq 0$.      
 For an integer $k\geq 1$, let $E$ be the $s\times s$ invertible matrix provided by Lemma \ref{ $J^k$ lemma} so that 
 $$E (J^t)^k E^{-1}=\left[\begin{matrix}
\lambda^k & 1 \\
		& \ddots 	& \ddots \\
		&			&\lambda^k	& 1\\
		&			&			&\lambda^k
 \end{matrix}\right].$$    
 Then $$G=E^{-1}\left(\frac{f_1 f_s^{k-1}}{\lambda^{(k-1)(s-1)}}, ... , \frac{f_{s-1} f_s^{k-1}}{\lambda^{(k-1)(1)}}, \frac{f_s^k}{\lambda^{(k-1)(0)}}  \right)^t$$ satisfies $\C G= (J^t)^k G$ and has linearly independent component functions. 
\end{theorem}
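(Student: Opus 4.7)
The plan is to exploit the multiplicativity of the composition operator, $\C(fg)=(\C f)(\C g)$, in direct analogy with the elementary one-variable observation that if $f$ is an eigenfunction of $\C$ with eigenvalue $\lambda$, then $f^k$ is an eigenfunction with eigenvalue $\lambda^k$. Unpacking the hypothesis $\C F=J^t F$ gives the chain relations $\C f_j=\lambda f_j+f_{j+1}$ for $1\leq j<s$ together with $\C f_s=\lambda f_s$. In particular $f_s$ is a $\lambda$-eigenfunction, so by multiplicativity $\C(f_s^{k-1})=\lambda^{k-1}f_s^{k-1}$.

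Set $H=(h_1,\ldots,h_s)^t$ with $h_j:=f_j f_s^{k-1}/\lambda^{(k-1)(s-j)}$, so that $G=E^{-1}H$ by the definition in the statement. I would first verify $\C H=MH$, where $M$ is the upper triangular $\lambda^k$-Jordan block of length $s$ produced by the preceding lemma. For $j<s$, multiplicativity and the chain relation yield
\[
\C h_j=\frac{(\lambda f_j+f_{j+1})\,\lambda^{k-1}f_s^{k-1}}{\lambda^{(k-1)(s-j)}}=\lambda^k h_j+\frac{f_{j+1}f_s^{k-1}}{\lambda^{(k-1)(s-j-1)}}=\lambda^k h_j+h_{j+1};
\]
the normalizing power $\lambda^{(k-1)(s-j)}$ is chosen precisely so that the cross-term collapses onto $h_{j+1}$. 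For $j=s$ one has $\C h_s=\lambda^k h_s$. Together these are the chain relations encoded by $\C H=MH$.

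Because $\C$ acts componentwise on vectors of functions, it commutes with left multiplication by any constant matrix. Using $M=E(J^t)^k E^{-1}$ from the previous lemma and $G=E^{-1}H$, the short calculation
\[
\C G=E^{-1}\,\C H=E^{-1}MH=(J^t)^k E^{-1}H=(J^t)^k G
\]
delivers the desired functional equation. For linear independence of the components of $G$, invertibility of $E^{-1}$ reduces the question to linear independence of $h_1,\ldots,h_s$. Since $f_s$ is a nonzero analytic eigenfunction with eigenvalue $\lambda\neq 0$, the open set where $f_s\neq 0$ is dense. On this set any relation $\sum c_j h_j=0$ may be divided by $f_s^{k-1}$ to give $\sum c_j f_j/\lambda^{(k-1)(s-j)}=0$ on a dense open subset, hence everywhere by analyticity; the hypothesized linear independence of $f_1,\ldots,f_s$ then forces every $c_j=0$.

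The entire argument is essentially bookkeeping once multiplicativity is invoked: the combinatorial content lies in choosing the correct powers of $\lambda$ in the denominators of the $h_j$, and the only analytic subtlety is the division by $f_s^{k-1}$ on a dense open set, which is routine. No substantive obstacle is anticipated.
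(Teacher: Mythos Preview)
Your proof is correct and follows essentially the same approach as the paper: define $H=(h_j)^t$ with $h_j=f_jf_s^{k-1}/\lambda^{(k-1)(s-j)}$, verify via multiplicativity of $\C$ that the $h_j$ form a $\lambda^k$-chain, then conjugate by $E^{-1}$ to obtain $G$. Your linear-independence argument (dividing by $f_s^{k-1}$ on the dense open set where $f_s\neq 0$) is a slightly more explicit version of the paper's one-line remark that $\{h_j\}=f_s^{k-1}\{c_jf_j\}$ inherits linear independence from $\{f_j\}$.
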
  
\begin{proof}
$\C F=J^t F$ is equivalently written as 
$$(\C-\lambda I): f_1 \mapsto f_2 \mapsto ... \mapsto f_s \mapsto 0.$$  
For $j=1, ... , s$ set
$$h_j= \frac{f_j f_s^{k-1}}{\lambda^{(k-1)(s-j)}},$$
and $H=(h_1, ... , h_s)^t.  $
 A simple calculation yields
$$\C h_s=\C f_s^{k} = \lambda^k f_s^{k}=\lambda^k h_s,$$
and for $1\leq j <s$
\begin{align*}
\C h_j 	&= \C \frac{f_j f_s^{k-1}}{\lambda^{(k-1)(s-j)}}\\
		&= \frac{(\lambda f_j + f_{j+1})\lambda^{k-1}f_s^{(k-1)} }{\lambda^{(k-1)(s-j)}}\\
		&=\lambda^k \frac{f_j f_s^{(k-1)} }{\lambda^{(k-1)(s-j)}}+\frac{f_{j+1}f_s^{(k-1)}}{\lambda^{(k-1)(s-j-1)}}\\
		&=\lambda^k h_j + h_{j+1}.
\end{align*}
Equivalently, 
$$(\C-\lambda^k I): h_1\mapsto h_2 \mapsto ... \mapsto h_s \mapsto 0.$$
So we have $$\C H= \left[\begin{matrix}
\lambda^k & 1 \\
		& \ddots 	& \ddots \\
		&			&\lambda^k	& 1\\
		&			&			&\lambda^k
 \end{matrix}\right] H = (E (J^t)^k E^{-1})H.$$
 It follows that $G:=E^{-1}H$ satisfies $\C G=(J^t)^k G$ as desired.  

Lastly, note that $\{h_j\}_j=f_s^{(k-1)}\{c_1f_1, ..., c_sf_s \}$ with $c_j=\lambda^{-(k-1)(s-j)}$, and so is a linearly independent set.   
Now for any non-zero $a\in \mathbb{C}^s$ we  have $  0\neq \langle H, a\rangle =\langle EG,a\rangle =\langle G, E^* a \rangle.$  Since $E^*$ is bijective, this implies that $G$ has linearly independent component functions. 
\end{proof}
Note that if $\phi'(0)$ has full rank, then none of it's eigenvalues can be zero, so the hypothesis $\lambda\neq 0$ in the theorem above is not restrictive.  Sections \ref{Sec:SA} and \ref{Sec. Sw/oCRN0} assure us we can  find a full rank solution to Schroeder's equation, and allow us to first put $\phi'(0)$ into Jordan form, and then find such a solution (by finding the chains of $U$ and subsequently $\C$). Thus, Theorem \ref{How to find F } now allows us to explicitly find solutions to $\C F= \phi'(0)^k F$ that have linearly independent component functions.  If $k>1$, Theorem \ref{main generalized theorem} says that no full rank solution exists, so in some sense this is the best we can do.  The case $k=1$ of course is handled in Section \ref{Sec: CfaFRS}.  

The following corollary is known,  for example \cite[Theorem 7.20]{CMbook}, but since it follows immediately, we state it for completeness.  

\begin{corollary}
Let $\phi$ be a self-map of $\mathbb{B}^n$ fixing 0, and suppose $\phi$ is not unitary on any slice, and $\phi'(0)$ has full rank.  Then for all $\lambda \in \sigma(\phi'(0))$, and for all non-negative integers $k$, $\lambda^k$ is an eigenvalue of $\C$.  
\end{corollary}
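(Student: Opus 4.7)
The plan is to deduce this corollary almost immediately from Theorem \ref{main generalized theorem} by peeling a scalar eigenfunction off of the vector-valued solution it produces. First I would dispose of $k = 0$ trivially: $\C \mathbf{1} = \mathbf{1} \circ \phi = \mathbf{1}$, so $1 = \lambda^0$ is an eigenvalue of $\C$ for every $\lambda$.

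For a fixed integer $k \geq 1$ and a fixed $\lambda \in \sigma(\phi'(0))$, I would invoke Theorem \ref{main generalized theorem} to produce an analytic $F = (f_1, \ldots, f_n)^t : \Bn \to \Cn$ with linearly independent component functions satisfying $\C F = \phi'(0)^k F$. Because $\sigma(\phi'(0)^k) = \{\lambda_j^k : \lambda_j \in \sigma(\phi'(0))\}$, we have $\lambda^k \in \sigma(\phi'(0)^k) = \sigma\!\left((\phi'(0)^k)^t\right)$, so I can choose a nonzero vector $v \in \Cn$ that is a left eigenvector of $\phi'(0)^k$ with eigenvalue $\lambda^k$; that is, $v^t \phi'(0)^k = \lambda^k v^t$.

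Now set $g = v^t F = \sum_{i=1}^n v_i f_i$. Since the components of $F$ are linearly independent in $\A$ and $v \neq 0$, we have $g \not\equiv 0$. Using that $\C$ acts componentwise on $F$ and is linear,
\[
\C g = v^t (\C F) = v^t \phi'(0)^k F = \lambda^k v^t F = \lambda^k g,
\]
so $g$ is an eigenfunction of $\C$ with eigenvalue $\lambda^k$, as required.

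There is essentially no obstacle beyond Theorem \ref{main generalized theorem} itself, which already does all the hard work; the corollary reduces to a one-line linear-algebra observation. The only subtle point to watch is that $v^t F$ not vanish identically, and this is immediate from the linear independence of the $f_i$ guaranteed by that theorem.
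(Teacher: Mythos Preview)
Your argument is correct and is exactly the ``follows immediately'' that the paper has in mind: the paper gives no explicit proof, merely remarking that the corollary is known and drops out of Theorem~\ref{main generalized theorem}. Your left-eigenvector trick is the natural way to extract a scalar eigenfunction from the vector equation $\C F=\phi'(0)^kF$, and the linear independence of the components is precisely what guarantees $v^tF\not\equiv 0$; the $k=0$ case via the constant function is the obvious completion.
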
   

\bibliographystyle{amsalpha}
\bibliography{SEbib}

\end{document}